\newtheorem{teo}{Theorem}
\newtheorem{remark}{Remark}
\newtheorem{proposition}{Proposition}
\newtheorem{lemma}{Lemma}
\theoremstyle{definition}
\newtheorem{defn}{Definition}
\begin{document}

\title[Distributional fractional powers of similar operators]{
Distributional fractional powers of similar operators.  Applications to the Bessel operators}
\subjclass{[2000]{47A60, 26A33, 46F10; 46F12; 46F30}} \keywords{Distributional operators, Hankel transform,.}
\begin{abstract}
This paper provides a method to study the non-negativity of certain linear operators, from other  operators with similar spectral properties. If these
new operators are formally self-adjoint and non-negative, we can study the complex powers using an  appropriate locally convex space. In this case, the initial operator
also will be non-negative and we will be able to study their powers. In
particular, we have applied this method to Bessel-type operators.
\end{abstract}

\author[S. Molina]{Sandra Molina}
\address{Departamento de Matem\'{a}ticas. Facultad de Ciencias Exactas y Naturales\\Universidad Nacional de Mar del Plata\\
Funes 3350 (7600)\\
Mar del Plata, Argentina} \email{smolina@mdp.edu.ar}

\maketitle

\section{Introduction}
Operators of type Bessel appear in the literature related with different versions of Hankel transform (see \cite{al},\cite{HI},\cite{MB},\cite{az}).  We are going to consider   Bessel operators on $(0,\infty)$  given by

\begin{equation}\label{besselOp1}
\Delta _{\mu }=\frac{d^{2}}{dx^{2}}+(2\mu +1)(x^{-1}\frac{d}{dx})=x^{-2\mu -1}\frac{d}{dx}x^{2\mu +1}\frac{d}{dx}
\end{equation}
and
\begin{equation}\label{besselOp2}
S_{\mu}=\frac{d^{2}}{dx^{2}}-\frac{4\mu^{2}-1}{4x^{2}}=x^{-\mu -\frac{1}{2}}\frac{d}{dx}x^{2\mu +1}\frac{d}{dx}x^{-\mu -\frac{1}{2}},
\end{equation}
wich are related through
$$S_{\mu}=x^{\mu +\frac{1}{2}}\Delta _{\mu }x^{-\mu -\frac{1}{2}}.$$
This feature has inspired us to develop a method to study fractional powers based in a concept of "similar operator". Similar operators  have the same spectral properties between them and of being non negative if one of them has this property. This method will apply in the contexts of  Banach spaces and  locally convex spaces.

Moreover, other  feature of  operators (\ref{besselOp1}) and (\ref{besselOp2}) is that one of these is selfadjoint and the other is not. To establish the complex powers of a differential operator in distributional spaces is important that this operator be formally self-adjoint. It would therefore be
interesting to obtain an operator similar and formally self-adjoint from a given initial operator.

To study the distributional fractional powers of Bessel operators, we use the theory developed by Martinez y Sanz in \cite{ms}.
\vskip.2in

In section 2 we will review some of the standard facts about Hankel transform,  convolution and Bessel operator in distributional and Lebesgue spaces, which are fundamental to establish the non negativity of Bessel operator.
In Section 3 we will state our main results about  similar operators in Banach spaces and we will describe the relation between their  fractional powers. We extend this idea to locally convex spaces and we apply this ideas to the Bessel operator (\ref{besselOp1}) and (\ref{besselOp2}).

In section 4  we will establish the non-negativity of $S_{\mu }$ in suitable weighted Lebesgue spaces. In  sections 5 and 6 we will establish the non negativity of $S_{\mu }$ in a suitable locally convex space and in its dual space.

For the convenience of the reader, we have added an Appendix with the proofs of some results about Hankel transform, convolutions, thus making our exposition self-contained.

\vskip.3in

\section{Previous results on Hankel transform and convolution}
In this section we introduce the Lebesgue and distributional spaces  necessary for our purposes.

\noindent  By $\mathcal{D}(0,\infty)$ we denote the space of functions in
 $C^{\infty }(0,\infty)$ with compact support in $(0,\infty)$ with the usual topology,  and by $\mathcal{D'}(0,\infty)$ the space of
  classical distributions in $(0,\infty)$.

Throughout this paper we assume $\mu > -\frac{1}{2}$. We will consider the Hankel transform defined in a suitable
functional space denoted by $\mathcal{H}_{\mu }$ and given by

\begin{equation*}
\mathcal{H}_{\mu }=\left\{ \phi \in C^{\infty}(0,\infty):
\underset{x\in (0,\infty )}{\sup }\left| x^{m}(x^{-1}D)^{k}x^{-\mu -\frac{1}{%
2}}\phi (x)\right| <\infty :m,k=0,1,2,...\right\}
\end{equation*}
endowed with the family of seminorms $\left\{ \gamma _{m,k}^{\mu }\right\},$ given by

\begin{equation}\label{eqsemihmu}
\gamma _{m,k}^{\mu }(\phi )=\underset{x\in (0,\infty )}{\sup }\left| x^{m}(x^{-1}D)^{k}x^{-\mu -\frac{1}{%
2}}\phi (x)\right|,
\end{equation}
$\mathcal{H}_{\mu }$ is a  Fr\'{e}chet space (see \cite[Lemma 5.2-2, pp.
131]{az}).
 Given  $1\leq p <\infty$ and a measurable function $w:(0,\infty) \rightarrow \Bbb{C}$  then we consider

\begin{equation}\label{defespLp}
L^{p}((0,\infty), w\:dx)=\left\{ f:(0,\infty) \rightarrow \Bbb{C}:\: f
\:\text{is measurable and}\:\int_{0}^{\infty}\left| f(x)\right|
^{p}w(x)dx<\infty \right\}
\end{equation}
where with $dx$ we denote the Lebesgue usual measure. In $L^{p}((0,\infty), w\:dx)$ we consider the usual norm
\begin{equation*}
\left\| f\right\| _{L^{p}((0,\infty), w\:dx)}=\left[ \int_{0}^{\infty}\left|
f(x)\right| ^{p}w(x)dx\right] ^{1/p}.
\end{equation*}

\noindent Moreover,
$$L^{\infty }((0,\infty),w)=\left\{ f:(0,\infty) \rightarrow \Bbb{C}:\:\:\text{measurable and}\qquad \text{ess sup}_{x\in (0,\infty)}|wf(x)|<\infty \right\}$$
endowed with the norm
\begin{equation*}
\left\| f\right\| _{L^{\infty }((0,\infty),w)}=\left\| wf\right\|_{\infty }.
\end{equation*}
For simplicity of notation we write $L^{p}(w)$ and $L^{\infty}(w)$ instead of $L^{p}((0,\infty),w\:dx)$ and $L^{\infty}((0,\infty),w)$.

Let  $s$ and $r$  as follow
\begin{equation}\label{eqr}
r=x^{-\mu -\frac{1}{2}}
\end{equation}
\begin{equation}\label{eqs}
s=x^{2\mu +1}/c_{\mu }
\end{equation}
with $c_{\mu }=2^{\mu }\Gamma (\mu +1)$.
\begin{proposition}
\label{espacio hm} It is verified that
\begin{equation}\label{inclusHm}
\mathcal{H}_{\mu}\subset L^{1}(sr)\cap L^{\infty }(r)\subset L^{p}(sr^{p}) \:\:\:1\leq p<\infty,
\end{equation}
with $r$ and $s$ given by (\ref{eqr}) and (\ref{eqs}).
\end{proposition}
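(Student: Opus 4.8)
The plan is to treat the two inclusions in \eqref{inclusHm} separately, since the first is a consequence of the rapid decay built into the seminorms \eqref{eqsemihmu}, while the second is a weighted version of the elementary embedding $L^{1}\cap L^{\infty}\subset L^{p}$.

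\emph{First inclusion.} I would begin by computing the weights explicitly: from \eqref{eqr} and \eqref{eqs} one gets $sr=x^{\mu+\frac12}/c_{\mu}$, so that membership in $L^{\infty}(r)$ is precisely the finiteness of $\gamma_{0,0}^{\mu}(\phi)=\sup_{x}|x^{-\mu-\frac12}\phi(x)|$; hence $\phi\in L^{\infty}(r)$ is immediate, with $\|\phi\|_{L^{\infty}(r)}=\gamma_{0,0}^{\mu}(\phi)$. For $L^{1}(sr)$ I would write $|\phi(x)|\,x^{\mu+\frac12}=|x^{-\mu-\frac12}\phi(x)|\,x^{2\mu+1}$ and split the integral over $(0,1)$ and $(1,\infty)$. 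Near $0$ I bound $|x^{-\mu-\frac12}\phi(x)|\le\gamma_{0,0}^{\mu}(\phi)$ and use that $x^{2\mu+1}$ is integrable there because $\mu>-\frac12$ (so $2\mu+1>-1$). Near $\infty$ I use $|x^{-\mu-\frac12}\phi(x)|\le\gamma_{m,0}^{\mu}(\phi)\,x^{-m}$, valid for every $m$, and choose $m>2\mu+2$, which makes $x^{2\mu+1-m}$ integrable on $(1,\infty)$. Adding the two pieces yields $\phi\in L^{1}(sr)$ together with an estimate of $\|\phi\|_{L^{1}(sr)}$ in terms of finitely many seminorms.

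\emph{Second inclusion.} Here I would exploit the factorization of the weight $sr^{p}=(sr)\,r^{p-1}$. For $f\in L^{1}(sr)\cap L^{\infty}(r)$, write
\begin{equation*}
|f(x)|^{p}\,s(x)\,r(x)^{p}=\bigl(|f(x)|\,s(x)\,r(x)\bigr)\,\bigl(|r(x)f(x)|\bigr)^{p-1},
\end{equation*}
and bound $|r f|^{p-1}\le\|f\|_{L^{\infty}(r)}^{p-1}$ almost everywhere (for $p=1$ this factor is simply $1$). Integrating over $(0,\infty)$ gives
\begin{equation*}
\|f\|_{L^{p}(sr^{p})}^{p}\le\|f\|_{L^{\infty}(r)}^{p-1}\,\|f\|_{L^{1}(sr)}<\infty,
\end{equation*}
which is exactly the weighted interpolation inequality and establishes the inclusion.

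I do not expect a serious obstacle; the only point requiring care is the first inclusion, where one must match the two ends of the half-line to the correct tool: the hypothesis $\mu>-\frac12$ guarantees integrability at the origin, and the freedom to choose the index $m$ in the seminorm supplies the decay at infinity. Everything else is bookkeeping with the explicit weights $sr$ and $sr^{p}$.
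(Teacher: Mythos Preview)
Your proposal is correct and follows essentially the same route as the paper: the same split of $\int_{0}^{\infty}|\phi|\,sr\,dx$ at $x=1$ with the choice $m>2\mu+2$, and the same factorization $|f|^{p}sr^{p}=(|f|sr)\,|rf|^{p-1}$ yielding $\|f\|_{L^{p}(sr^{p})}\le\|f\|_{L^{\infty}(r)}^{(p-1)/p}\|f\|_{L^{1}(sr)}^{1/p}$. If anything, your version is slightly cleaner in that you state the second inclusion for a general $f\in L^{1}(sr)\cap L^{\infty}(r)$ rather than only for $\phi\in\mathcal{H}_{\mu}$.
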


\begin{proof}
 The inclusion $\mathcal{H}_{\mu}\subset L^{\infty}(r)$ is
immediate and also

\begin{equation}  \label{acotainf}
\left\| \phi \right\| _{L^{\infty }(r)}= \gamma _{0,0}^{\mu}(\phi),\quad \phi \in \mathcal{H}_{\mu}.
\end{equation}
It also verifies that $\mathcal{H}_{\mu} \subset L^{1}(sr)$  as
$$\int_{0}^{\infty}|\phi| \: sr\: dx=\int_{0}^{1}|x^{-\mu-\frac{1}{2}}\phi|x^{2\mu+1}c_{\mu}^{-1}\:dx+
\int_{1}^{\infty}x^{m}|x^{-\mu-\frac{1}{2}}\phi|x^{-m+2\mu+1}c_{\mu}^{-1}\:dx<\infty$$
if $m>2\mu+2$, and
\begin{equation}\label{acota L1}
 \| \phi \| _{L^{1}(sr)}\leq C \{\gamma _{0,0}^{\mu}(\phi)+\gamma_{m,0}^{\mu}(\phi)\} ,\quad \phi \in \mathcal{H}_{\mu}
\end{equation}

It also verifies that
\begin{equation}\label{acota Lp}
\left\| \phi \right\| _{L^{p}(sr^{p})}=\Bigl \{ \int_{0}^{\infty} |\phi|^{p-1}r^{p-1} |\phi|rs \Bigr \}^{\frac{1}{p}}\leq \Bigl \{ \left\| \phi \right\| _{L^{\infty}(r)}\Bigr \}^{\frac{p-1}{p}}\Bigl \{ \left\| \phi \right\| _{L^{1}(sr)}\Bigr \}^{\frac{1}{p}}
\end{equation}
and by  (\ref{acotainf}) and (\ref{acota L1}) we can consider a constant  $C'$ such that

\begin{equation} \label{acotainf1}
\left\| \phi \right\| _{L^{\infty }(r)}\leq C'\left[ \gamma
_{0,0}^{\mu }(\phi )+\gamma _{m,0}^{\mu }(\phi )\right],\quad \phi
\in \mathcal{H}_{\mu }.
\end{equation}

\begin{equation}\label{acota L11}
\left\| \phi \right\| _{L^{1}(sr)}\leq C'\left[ \gamma _{0,0}^{\mu }(\phi
)+\gamma _{m,0}^{\mu }(\phi )\right] ,\quad \phi \in \mathcal{H}_{\mu }
\end{equation}
and by (\ref{acota Lp}), (\ref{acotainf1}) and  (\ref{acota L11}) we finally  conclude that
\begin{equation}\label{acotatotal}
\left\| \phi \right\| _{L^{p}(sr^{p})}\leq C'\left[\gamma_{0,0}^{\mu}(\phi)+\gamma_{m,0}^{\mu}(\phi)\right],\quad \phi\in\mathcal{H}_{\mu}
\end{equation}
\end{proof}
\vskip.2in

If $J_{\mu}$ denote the Bessel function of first kind and order $\mu$, we consider the Hankel
transform  $\mathit{h}_{\mu }$ given by
\begin{equation}
\mathit{h}_{\mu}\phi(x)=\int_{0}^{\infty}\sqrt{xy}J_{\mu}(xy)\phi
(y)dy. \label{transf Hankel}
\end{equation}
for $\phi \in \mathcal{H}_{\mu }$.

\begin{remark}
\label{notatra hankel} If $\phi \in L^{1}(sr)$ then Hankel transform
$\mathit{h}_{\mu }\phi $ is well defined because the kernel
$(xy)^{-\mu }J_{\mu }(xy)$ is bounded if $\mu > -\frac{1}{2}$ (see \cite[(1), pp. 49]{wa}) .
By Proposition \ref{espacio hm}, $\mathit{h}_{\mu}\phi$   is well
defined for all $\phi \in \mathcal{H}_{\mu}$ and  is an automorphism (see  \cite[theorem 5.4-1, pp. 141]{az}).
\end{remark}

The space of the continuous linear functions $T:\mathcal{H}_{\mu }\rightarrow \mathbb{C}$ is denoted by
$\mathcal{H}_{\mu }^{\prime }$.



\begin{defn}
\label{regular h' mu} We call a function $f\in L_{loc}^{1}((0,\infty) )$ a {\it regular element} of
$\mathcal{H}_{\mu }^{\prime }$ if the identification
\begin{equation*}
\begin{array}{llll}
T_{f}: & \mathcal{H}_{\mu } & \longrightarrow  & \Bbb{C} \\
& \phi  & \longrightarrow  & (T_{f},\phi )=\int_{0}^{\infty }f\phi
\end{array}
\end{equation*}
is well defined and is continuous; namely  $T_{f}\in \mathcal{H}_{\mu }^{\prime }$.
\end{defn}
\begin{remark}\label{regdistrib} Given $T\in \mathcal{H}_{\mu }^{\prime }$, we can consider the restriction of $T$ to $\mathcal{D}(0,\infty)$ as a member of $\mathcal{D}^{\prime}(0,\infty)$, because convergence in $\mathcal{D}(0,\infty)$ implies convergence in $\mathcal{H}_{\mu }$. But $\mathcal{D}(0,\infty)$  is not  dense in $\mathcal{H}_{\mu }$ (see \cite{az}), consequently the behavior of an
element $u\in \mathcal{H}_{\mu }^{\prime}$ over
$\mathcal{D}(0,\infty)$ not determines univocally the behavior of
$u$ as element of $\mathcal{H}_{\mu }^{\prime }.$  If a locally integrable function is zero as regular element of  $\mathcal{H}_{\mu }^{\prime }$, then is zero  a.e in $(0,\infty)$ because is zero as regular distribution in $\mathcal{D}^{\prime}(0,\infty)$. So,  regular distributions in $\mathcal{H}_{\mu }^{\prime}$ are included in injective way in $\mathcal{D'}(0,\infty)$.
\end{remark}
\begin{proposition}
Suppose that $1\leq p<\infty $. A function in $L^{p}(sr^{p})$ or  $L^{\infty }(r)$ is a regular element of $\mathcal{H}_{\mu }^{\prime }$.
In particular, the functions in $\mathcal{H}_{\mu }$ can be considered as regular elements of $\mathcal{H}_{\mu}^{\prime }$.
\end{proposition}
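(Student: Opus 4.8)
The plan is to verify, for a function $f$ in either of the two spaces, the two requirements of Definition \ref{regular h' mu}: that the integral $(T_f,\phi)=\int_0^\infty f\phi\,dx$ converges absolutely for every $\phi\in\mathcal{H}_\mu$, and that the resulting linear functional is continuous on $\mathcal{H}_\mu$. Since $\mathcal{H}_\mu$ is a Fr\'echet space with the seminorms $\gamma_{m,k}^\mu$, both requirements follow at once from a single estimate of the shape $|(T_f,\phi)|\le C_f\,[\gamma_{0,0}^\mu(\phi)+\gamma_{m,0}^\mu(\phi)]$, so the whole task reduces to producing such a bound. The computational heart is the elementary identity $r^2s=c_\mu^{-1}$, read off directly from (\ref{eqr}) and (\ref{eqs}) because $x^{-2\mu-1}\cdot x^{2\mu+1}=1$. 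Writing $1=c_\mu r^2 s$ inside the integrand lets me transport the pairing to the fixed measure $s\,dx$:
$$\int_0^\infty |f\phi|\,dx=c_\mu\int_0^\infty |fr|\,|\phi r|\,s\,dx,$$
which is precisely the manoeuvre that makes the weights $sr^p$ and $sr^q$ genuinely conjugate.

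With this in hand, for $1<p<\infty$ I would apply H\"older's inequality with respect to $s\,dx$ and the conjugate exponent $q$, using $|fr|^p s=|f|^p sr^p$ and $|\phi r|^q s=|\phi|^q sr^q$, to obtain $\int_0^\infty |fr||\phi r|\,s\,dx\le \|f\|_{L^p(sr^p)}\,\|\phi\|_{L^q(sr^q)}$. The endpoint $p=1$ is the same computation with $q=\infty$, pairing $\|f\|_{L^1(sr)}$ against $\|\phi\|_{L^\infty(r)}$, and the case $f\in L^\infty(r)$ is the symmetric statement, pairing $\|f\|_{L^\infty(r)}$ against $\|\phi\|_{L^1(sr)}$. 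In each case the factor carrying $\phi$ is finite because Proposition \ref{espacio hm} guarantees $\mathcal{H}_\mu\subset L^q(sr^q)\cap L^\infty(r)$ for every exponent, which secures absolute convergence; and, crucially, (\ref{acotatotal}) (with $p$ replaced by $q$) together with (\ref{acotainf}) bounds $\|\phi\|_{L^q(sr^q)}$ and $\|\phi\|_{L^\infty(r)}$ by $C'[\gamma_{0,0}^\mu(\phi)+\gamma_{m,0}^\mu(\phi)]$. Feeding these into the H\"older estimate delivers exactly the seminorm bound sought, hence continuity of $T_f$. The final assertion is then immediate: since $\mathcal{H}_\mu\subset L^\infty(r)$ by (\ref{inclusHm}), every $\phi\in\mathcal{H}_\mu$ is itself a regular element of $\mathcal{H}_\mu'$.

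The only genuine obstacle is conceptual rather than computational: one must recognize that the correct dual partner of $L^p(sr^p)$ under the pairing $\int f\phi\,dx$ is $L^q(sr^q)$, and not some other naively weighted space. The identity $r^2 s=c_\mu^{-1}$ is precisely what renders these exponents conjugate, and it is this observation that converts the problem into a routine application of H\"older's inequality on the fixed measure $s\,dx$. A minor point requiring attention is that the \emph{same} index $m$ (any $m>2\mu+2$) may be used for $\phi$ regardless of which conjugate exponent $q$ arises; this is legitimate because the underlying bound (\ref{acota L11}) on $\|\phi\|_{L^1(sr)}$ is independent of the exponent, so the constants in (\ref{acotatotal}) do not force $m$ to vary with $q$.
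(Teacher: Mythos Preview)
Your proposal is correct and follows essentially the same approach as the paper's proof: both exploit the identity $s^{-1}r^{-1}=c_\mu r$ (equivalently $r^2s=c_\mu^{-1}$) to rewrite $\int_0^\infty|f\phi|\,dx$ as $c_\mu\int_0^\infty(r|f|)(r|\phi|)\,s\,dx$, apply H\"older's inequality with respect to the measure $s\,dx$, and then invoke the seminorm bounds (\ref{acota L1}) and (\ref{acotatotal}) from Proposition~\ref{espacio hm} to control the $\phi$-factor. Your treatment is, if anything, slightly more explicit about the endpoint cases $p=1$ and $f\in L^\infty(r)$, and about the fact that the same index $m>2\mu+2$ works uniformly in $q$, but the substance of the argument is identical.
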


\begin{proof}
Let  $f\in L^{\infty }(r)$,  since $\mathcal{H}_{\mu }\subset
L^{1}(sr)=L^{1}(r^{-1}/c_{\mu })$,  if $\phi \in \mathcal{H}_{\mu
}$,  then $\phi \in L^{1}(r^{-1})$ and $(T_{f},\phi
)=\int_{0}^{\infty }f\phi $ is well defined for $\phi \in \mathcal{H}%
_{\mu }$. So, by (\ref{acota L1})
$$
\left| (T_{f},\phi )\right| \leq \left\| f\right\| _{L^{\infty }(r)}\left\|
\phi \right\| _{L^{1}(r^{-1})}=c_{\mu }\left\| f\right\| _{L^{\infty
}(r)}\left\| \phi \right\| _{L^{1}(sr)}$$
$$\leq Cc_{\mu }\left\| f\right\|
_{L^{\infty }(r)}\left[ \gamma _{0,0}^{\mu }(\phi )+\gamma _{m,0}^{\mu
}(\phi )\right]$$

\noindent consequently,  $f$ is a regular element of $\mathcal{H}_{\mu }^{\prime }$.

\noindent Now, let $f\in L^{p}(sr^{p})$ with $1\leq p<\infty$, then
\begin{equation}
\left| (T_{f},\phi )\right| \leq \int_{0}^{\infty }\left| f\phi \right|
=\int_{0}^{\infty }\left( r\left| f\right| \right) \left( s^{-1}r^{-1}\left|
\phi \right| \right) s=\int_{0}^{\infty }\left( r\left| f\right| \right)
\left( c_{\mu }r\left| \phi \right| \right) s  \label{identif f}
\end{equation}
Since $r\left| f\right| \in L^{p}(s)$ and $r\left| \phi \right| \in
L^{q}(s)$ because $\phi \in \mathcal{H}_{\mu }\subset L^{q}(sr^{q})$ (see (\ref{inclusHm})), being  $q$ the conjugate of $p$, we obtain by
 H\"{o}lder inequality and (\ref{acotatotal})
\begin{equation*}
\left| (T_{f},\phi )\right| \leq c_{\mu } \left\| f \right\|
_{L^{p}(sr^{p})}\left\| \phi \right\| _{L^{q}(sr^{q})}\leq Cc_{\mu }\left\|
f\right\| _{L^{p}(sr^{p})}\left[ \gamma _{0,0}^{\mu }(\phi )+\gamma
_{m,0}^{\mu }(\phi )\right]
\end{equation*}
with $m$ a positive integer $m>2\mu +2$. So, $f$ is a regular element of $\mathcal{H}_{\mu }^{\prime }$.
\end{proof}
\vskip.2in

Given $f,g$ defined in $(0,\infty )$, the Hankel convolution $f\sharp g$ is defined formally by
\begin{equation}\label{conv zem}
\left( f\sharp g\right) (x)=\int_{0}^{\infty }\int_{0}^{\infty
}D_{\mu }(x,y,z)f(y)g(z)\:dydz
\end{equation}
where $D_{\mu }(x,y,z)$ is given by

\begin{equation}
D_{\mu }(x,y,z)=\left \{
\begin{array}{ccc}
\frac{2^{\mu-1}(xyz)^{-\mu+\frac{1}{2}}}{\Gamma(\mu+\frac{1}{2})\sqrt{\pi}}(A(x,y,z))^{2\mu-1} &\mbox{si}&\vert x-y \vert <z <x+ y \\
0 &\mbox{si}& 0<z<\vert x-y \vert \:\: \mbox{o}\:\: z> x+y .\\
\end{array} \right.
\end{equation}

\noindent   $A(x,y,z)$ is the measure of area of the triangle with sides $x,y,z\quad$  and    $\quad\vert x-y \vert <z <x+ y$ is the condition  for such a triangle to exist, and in this case  $A(x,y,z)=\frac{1}{4}\sqrt{[(x+y)^{2}-z^{2}][z^{2}-(x-y)^{2}]}$.

\vskip.2in
The proofs of Theorems \ref{teoyoung} and \ref{approxiden} and Proposition \ref{prop hankel conv} are displayed in Appendix.
\begin{teo}\label{teoyoung}
Let $f\in L^{1}(sr)$.

\begin{enumerate}
\item  If $g\in L^{\infty }(r)$,  then the convolution   $f\sharp g(x)$ exists for every $x\in (0,\infty )$,
$f\sharp g \in
L^{\infty }(r)$ and
\begin{equation}\label{young Linfty}
\left\| f\sharp g\right\| _{L^{\infty }(r)}\leq \left\| f\right\|
_{L^{1}(sr)}\left\| g\right\| _{L^{\infty }(r)}.
\end{equation}

\item  If $g\in L^{p}(sr^{p})$ $(1\leq p<\infty )$,
then the convolution $f\sharp g(x)$ exists for a. e. $x\in (0,\infty ),$ $f\sharp g$ $\in L^{p}(sr^{p})$  and
\begin{equation}\label{young lp}
\left\| f\sharp g\right\| _{L^{p}(sr^{p})}\leq \left\| f\right\|
_{L^{1}(sr)}\left\| g\right\| _{L^{p}(sr^{p})}.
\end{equation}
\end{enumerate}
\end{teo}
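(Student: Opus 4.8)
The plan is to reduce both parts to a single normalization property of the kernel, namely the identity
\begin{equation*}
\int_0^\infty D_\mu(x,y,z)\, z^{\mu+\frac12}\, dz = \frac{(xy)^{\mu+\frac12}}{c_\mu},
\end{equation*}
equivalently $\int_0^\infty D_\mu(x,y,z)\, r(z)^{-1}\, dz = c_\mu^{-1} r(x)^{-1} r(y)^{-1}$. This single identity, together with the full symmetry of $D_\mu(x,y,z)$ under permutations of $(x,y,z)$ (evident since both $A(x,y,z)$ and the triangle condition $|x-y|<z<x+y$ are symmetric), yields by relabeling the companion identities obtained by integrating in $x$ or in $y$; in particular $\int_0^\infty D_\mu(x,y,z)\, s(x)\, r(x)\, dx = c_\mu^{-2}(yz)^{\mu+\frac12}$. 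I expect the proof of this normalization identity to be the only genuine computation, and hence the main obstacle: the substitution $u=z^2$ turns the $z$-integral over the triangle range into $\int_{(x-y)^2}^{(x+y)^2}(b-u)^{\mu-\frac12}(u-a)^{\mu-\frac12}\,du$ with $a=(x-y)^2$, $b=(x+y)^2$, a Beta integral equal to $(4xy)^{2\mu}\,\Gamma(\mu+\frac12)^2/\Gamma(2\mu+1)$; the remaining constants collapse to $c_\mu^{-1}$ after invoking the Legendre duplication formula $\Gamma(2\mu+1)=2^{2\mu}\pi^{-1/2}\Gamma(\mu+\frac12)\Gamma(\mu+1)$. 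Everything after this is soft.

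For part (1) I would estimate pointwise. Using $|g(z)|\le \|g\|_{L^\infty(r)}\, r(z)^{-1}$ a.e. and the normalization identity,
\begin{equation*}
r(x)\,|f\sharp g(x)| \le \|g\|_{L^\infty(r)}\, r(x)\int_0^\infty |f(y)|\left(\int_0^\infty D_\mu(x,y,z)\, r(z)^{-1}\, dz\right) dy = \|g\|_{L^\infty(r)}\int_0^\infty |f(y)|\, s(y)\, r(y)\, dy,
\end{equation*}
which is exactly $\|f\|_{L^1(sr)}\,\|g\|_{L^\infty(r)}$. The interchange of integrals is justified by Tonelli (nonnegative integrand), and finiteness of the bound gives absolute convergence of the defining double integral for \emph{every} $x$.

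For part (2) I would introduce the generalized translation $\tau_y g(x)=\int_0^\infty D_\mu(x,y,z)\, g(z)\, dz$, so that $f\sharp g(x)=\int_0^\infty f(y)\,\tau_y g(x)\, dy$, and apply Minkowski's integral inequality in $L^p(sr^p)$ to obtain $\|f\sharp g\|_{L^p(sr^p)}\le \int_0^\infty |f(y)|\,\|\tau_y g\|_{L^p(sr^p)}\, dy$. It then suffices to prove the translation bound $\|\tau_y g\|_{L^p(sr^p)}\le s(y)\, r(y)\,\|g\|_{L^p(sr^p)}$, since $\int_0^\infty |f|\, s r = \|f\|_{L^1(sr)}$ closes the estimate. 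To prove it I normalize $D_\mu(x,y,z)\, r(z)^{-1}\, dz$ to a probability measure $d\pi_{x,y}(z)$ in $z$ via the normalization identity, write $\tau_y g(x)=(c_\mu r(x) r(y))^{-1}\int_0^\infty (gr)(z)\, d\pi_{x,y}(z)$, apply Jensen's inequality to $t\mapsto |t|^p$, then integrate in $x$ and use Tonelli together with the companion identity $\int_0^\infty D_\mu(x,y,z)\, s(x)\, r(x)\, dx = c_\mu^{-2}(yz)^{\mu+\frac12}$; the exponents recombine because $r(z)^{p-2}=c_\mu\, s(z)\, r(z)^p$, collapsing the right-hand side to $\|g\|_{L^p(sr^p)}^p$ up to the factor $(c_\mu r(y))^{-p}=(s(y)r(y))^p$. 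Absolute convergence for a.e.\ $x$ follows from the same computation applied to $|f|\sharp|g|$. Alternatively, once part (1) and the case $p=1$ (proved by Tonelli and the companion identity) are in hand, one may interpolate: in the variable $G=rg$ all three norms become $L^p$ norms for the measure $s\,dx$, and Riesz--Thorin applied to the linear map $g\mapsto f\sharp g$ gives part (2) directly with the same constant $\|f\|_{L^1(sr)}$.
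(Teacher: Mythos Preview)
Your proposal is correct. Part~(1) is essentially identical to the paper's argument: both bound $|g(z)|\le\|g\|_{L^\infty(r)}\,r(z)^{-1}$ and invoke the normalization identity $\int_0^\infty D_\mu(x,y,z)\,z^{\mu+1/2}\,dz=c_\mu^{-1}(xy)^{\mu+1/2}$. The paper records this identity as Proposition~\ref{propD}(3) and proves it via the angular substitution $z=\sqrt{x^2+y^2-2xy\cos\phi}$, whereas your Beta-integral route via $u=z^2$ and Legendre duplication is a legitimate alternative computation of the same quantity.

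For part~(2) the approaches genuinely diverge. The paper absorbs $f$ into a kernel
\[
K(x,z)=c_\mu\,r(x)\,r(z)\int_0^\infty f(y)\,D_\mu(x,y,z)\,dy,
\]
verifies the two Schur-type bounds $\int|K(x,z)|\,s(x)\,dx\le\|f\|_{L^1(sr)}$ and $\int|K(x,z)|\,s(z)\,dz\le\|f\|_{L^1(sr)}$ using the companion identities, and then quotes Folland's Theorem~6.18 to conclude boundedness on $L^p(s)$; the change of variable $h=rg$ transfers this to $L^p(sr^p)$. You instead keep $f$ outside, apply Minkowski's integral inequality in the $y$-variable, and reduce to the translation estimate $\|\tau_y g\|_{L^p(sr^p)}\le s(y)r(y)\,\|g\|_{L^p(sr^p)}$, which you obtain by normalizing $D_\mu(x,y,z)\,r(z)^{-1}\,dz$ to a probability measure and using Jensen plus Tonelli. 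Both arguments rest on exactly the same two ingredients---the normalization identity and the symmetry of $D_\mu$---so neither is deeper than the other; yours makes the role of the generalized translation explicit and avoids citing an external theorem, while the paper's is shorter once the Schur test is taken as a black box. Your interpolation alternative is in fact essentially the content of Folland~6.18 and hence coincides with the paper's route after the change of variable $G=rg$.
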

\vskip.2in

\begin{teo}\label{approxiden} Let $\{\phi_{n}\}\subset L^{1}(rs)$ such that
\begin{enumerate}
\item $\phi_{n}\geq 0$ in $(0,\infty)$,
\item $\int_{0}^{\infty}\phi_{n}(x) r(x)s(x)\:dx=1$  for all n
\item For $\delta > 0$ , $\lim_{n\rightarrow \infty}\int_{\delta}^{\infty} \phi_{n}(x)r(x)s(x)\:dx=0$.
\end{enumerate}
\noindent Let $f\in L^{\infty}(r)$ and continuous in $x_{0}\in(0,\infty)$, then $\lim_{n\rightarrow \infty} f\sharp \phi_{n}(x_{0})=f(x_{0})$.
Further, if   $\:\:rf\:\:$   is uniformly continuous in $(0,\infty)$ then  $\lim_{n\rightarrow \infty} \|f\sharp \phi_{n}(x)-f(x)\|_{L^{\infty}(r)}=0$
\end{teo}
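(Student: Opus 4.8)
The plan is to run the standard three-epsilon approximate-identity argument, carried out in the weighted picture of $L^{\infty}(r)$ so that the weight $r$ is absorbed into the kernel. The engine of the whole proof is a single normalization identity for the area kernel, namely
$$\int_{0}^{\infty}D_{\mu}(x,y,z)\,y^{\mu+\frac12}\,dy=\frac{(xz)^{\mu+\frac12}}{c_{\mu}},\qquad x,z\in(0,\infty),$$
equivalently $r(x)\int_{0}^{\infty}D_{\mu}(x,y,z)\,r(y)^{-1}\,dy=r(z)s(z)$. I would take this as the only nontrivial analytic input, expecting it to fall out of the explicit kernel computations behind Proposition \ref{prop hankel conv}, and I expect \emph{this identity} to be the main obstacle, since it rests on a Weber--Schafheitlin-type evaluation of $\int D_{\mu}\,y^{\mu+1/2}\,dy$ rather than on any soft argument. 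Everything after it is measure theory.

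First I would fix $x_{0}$ and, since $f\in L^{\infty}(r)$ and $\phi_{n}\in L^{1}(rs)$, invoke Theorem \ref{teoyoung}(1) to guarantee that $f\sharp\phi_{n}(x_{0})$ exists and that the double integral of $D_{\mu}(x_{0},y,z)|f(y)|\phi_{n}(z)$ is finite; Fubini then lets me write $f\sharp\phi_{n}(x_{0})=\int_{0}^{\infty}\big[\int_{0}^{\infty}D_{\mu}(x_{0},y,z)f(y)\,dy\big]\phi_{n}(z)\,dz$. Using condition (2) to write $f(x_{0})=f(x_{0})\int_{0}^{\infty}\phi_{n}(z)r(z)s(z)\,dz$ and then replacing $r(z)s(z)$ by $r(x_{0})\int_{0}^{\infty}D_{\mu}(x_{0},y,z)r(y)^{-1}\,dy$ via the normalization identity, the two double integrals combine, with $F:=rf$, into
$$r(x_{0})\big[f\sharp\phi_{n}(x_{0})-f(x_{0})\big]=\int_{0}^{\infty}\Big[\int_{0}^{\infty}D_{\mu}(x_{0},y,z)\,\frac{r(x_{0})}{r(y)}\big(F(y)-F(x_{0})\big)\,dy\Big]\phi_{n}(z)\,dz.$$
Because the triangle support of $D_{\mu}(x_{0},\cdot,z)$ forces $|y-x_{0}|<z$, I bound $|F(y)-F(x_{0})|$ by $\eta(z):=\sup_{|y-x_{0}|<z}|F(y)-F(x_{0})|$ and use the normalization once more, in the form $\int_{0}^{\infty}D_{\mu}(x_{0},y,z)\frac{r(x_{0})}{r(y)}\,dy=r(z)s(z)$, to obtain the clean estimate
$$\big|r(x_{0})\big(f\sharp\phi_{n}(x_{0})-f(x_{0})\big)\big|\le\int_{0}^{\infty}\eta(z)\,\phi_{n}(z)\,r(z)\,s(z)\,dz.$$

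Now the measures $\phi_{n}\,r\,s\,dz$ are probabilities concentrating at $0$, so the pointwise statement follows by the usual split. Given $\varepsilon>0$, continuity of $f$ (hence of $F$) at $x_{0}$ yields $\delta$ with $\eta(\delta)<\varepsilon/2$; then $\int_{0}^{\delta}\eta\,\phi_{n}rs\,dz\le\eta(\delta)\le\varepsilon/2$ by (2), while $\int_{\delta}^{\infty}\eta\,\phi_{n}rs\,dz\le 2\|f\|_{L^{\infty}(r)}\int_{\delta}^{\infty}\phi_{n}rs\,dz\to0$ by (3). Dividing by the fixed positive constant $r(x_{0})$ gives $f\sharp\phi_{n}(x_{0})\to f(x_{0})$.

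Finally, for the uniform statement I would observe that the displayed estimate holds for arbitrary $x_{0}$, and that if $F=rf$ is \emph{uniformly} continuous then $\eta$ may be replaced by the global modulus of continuity $\omega_{F}(z)=\sup_{|u-v|<z}|F(u)-F(v)|$, which is independent of $x_{0}$, tends to $0$ as $z\to0^{+}$, and is bounded by $2\|f\|_{L^{\infty}(r)}$. Taking the supremum over $x_{0}$ turns the estimate into $\|f\sharp\phi_{n}-f\|_{L^{\infty}(r)}\le\int_{0}^{\infty}\omega_{F}(z)\,\phi_{n}rs\,dz$, and the same $\delta$/tail split, now uniform in $x_{0}$, yields $\|f\sharp\phi_{n}-f\|_{L^{\infty}(r)}\to0$.
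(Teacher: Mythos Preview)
Your proof is correct and follows essentially the same route as the paper: both arguments hinge on the normalization identity $\int_{0}^{\infty}D_{\mu}(x,y,z)\,y^{\mu+1/2}\,dy=c_{\mu}^{-1}(xz)^{\mu+1/2}$ (this is precisely Proposition~\ref{propD}(3), using the symmetry of $D_{\mu}$), then exploit the triangle support $|y-x_{0}|<z$ of $D_{\mu}(x_{0},\cdot,z)$ to reduce to a near/far split of the $z$-integral at $\delta$, controlling the near part by continuity and the far part by hypothesis~(3). Your packaging via $F=rf$, $\eta(z)$, and the modulus $\omega_{F}$ is a bit cleaner than the paper's explicit $I_{1}/I_{2}$ computation, but the content is identical; note also that the identity you flag as the ``main obstacle'' is in fact established in the Appendix by an elementary trigonometric substitution rather than a Weber--Schafheitlin evaluation.
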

\vskip.2in

\begin{proposition}\label{prop hankel conv}
Let $f,g\in L^{1}(sr)$, then
\begin{equation}
\mathit{h}_{\mu }\left( f\sharp g\right) =r\mathit{h}_{\mu }(f)\mathit{h}%
_{\mu }(g).  \label{hankel convol}
\end{equation}
\end{proposition}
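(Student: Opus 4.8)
The plan is to compute $h_{\mu}(f\sharp g)$ straight from the definitions and collapse the resulting triple integral by means of a classical product formula for Bessel functions. First note that the left-hand side makes sense: by Theorem~\ref{teoyoung}(2) with $p=1$ we have $f\sharp g\in L^{1}(sr)$, so $h_{\mu}(f\sharp g)$ is well defined by Remark~\ref{notatra hankel}. Inserting (\ref{transf Hankel}) and then the definition (\ref{conv zem}) of the convolution gives
\begin{equation*}
h_{\mu}(f\sharp g)(x)=\int_{0}^{\infty}\sqrt{xt}\,J_{\mu}(xt)\left(\int_{0}^{\infty}\int_{0}^{\infty}D_{\mu}(t,y,z)f(y)g(z)\,dy\,dz\right)dt.
\end{equation*}
The goal is to interchange the $t$-integration with the $(y,z)$-integration and then evaluate the inner integral in $t$.

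The heart of the matter is the identity
\begin{equation}\label{prodform}
\int_{0}^{\infty}\sqrt{xt}\,J_{\mu}(xt)\,D_{\mu}(t,y,z)\,dt=r(x)\,\sqrt{xy}\,J_{\mu}(xy)\,\sqrt{xz}\,J_{\mu}(xz),
\end{equation}
valid for fixed $x,y,z>0$. Substituting the explicit form of $D_{\mu}$ and of the triangle area $A(t,y,z)$, the left-hand side of (\ref{prodform}) becomes, up to the constant $2^{\mu-1}/(\Gamma(\mu+\tfrac12)\sqrt{\pi})$, an integral of $J_{\mu}(xt)$ against $t^{-\mu+1}\big([(y+z)^{2}-t^{2}][t^{2}-(y-z)^{2}]\big)^{\mu-\frac12}$ over the interval $|y-z|<t<y+z$. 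This is exactly the Gegenbauer--Sonine product formula for Bessel functions (see Watson \cite{wa}), which represents $J_{\mu}(xy)J_{\mu}(xz)$ as precisely such an integral; carrying out the bookkeeping of the powers of $x,y,z$ is what produces the weight $r(x)=x^{-\mu-\frac12}$ on the right of (\ref{prodform}). Granting (\ref{prodform}) and the Fubini interchange, the triple integral factors as
\begin{equation*}
r(x)\left(\int_{0}^{\infty}\sqrt{xy}\,J_{\mu}(xy)f(y)\,dy\right)\left(\int_{0}^{\infty}\sqrt{xz}\,J_{\mu}(xz)g(z)\,dz\right)=r(x)\,h_{\mu}(f)(x)\,h_{\mu}(g)(x),
\end{equation*}
which is precisely (\ref{hankel convol}).

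It remains to justify the interchange of integrals, i.e.\ the absolute convergence of the triple integral, and here Theorem~\ref{teoyoung} does the work. By Remark~\ref{notatra hankel} the function $(xt)^{-\mu}J_{\mu}(xt)$ is bounded, so $|\sqrt{xt}\,J_{\mu}(xt)|\leq M(xt)^{\mu+\frac12}=M\,x^{\mu+\frac12}c_{\mu}\,r(t)s(t)$, using $r(t)s(t)=t^{\mu+\frac12}/c_{\mu}$. Since $D_{\mu}\geq 0$, the integral of the absolute values is therefore bounded by $M\,x^{\mu+\frac12}c_{\mu}\int_{0}^{\infty}(|f|\sharp|g|)(t)\,r(t)s(t)\,dt=M\,x^{\mu+\frac12}c_{\mu}\,\big\||f|\sharp|g|\big\|_{L^{1}(sr)}$, which is finite by Theorem~\ref{teoyoung}(2) with $p=1$, being at most $M\,x^{\mu+\frac12}c_{\mu}\|f\|_{L^{1}(sr)}\|g\|_{L^{1}(sr)}$. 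Thus Fubini applies and the computation above closes the proof. The genuinely delicate step is (\ref{prodform}): recognizing $D_{\mu}$ as the kernel for which the Bessel product formula holds and tracking the normalizing constants so that exactly the factor $r(x)$ emerges; the Fubini justification, by contrast, is routine once Young's inequality is available.
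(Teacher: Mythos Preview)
Your proof is correct and follows essentially the same route as the paper's: write out the triple integral, justify Fubini via the bound $|\sqrt{xt}\,J_{\mu}(xt)|\leq M(xt)^{\mu+1/2}$ together with Young's inequality for $|f|\sharp|g|$, and then collapse the inner integral using the Bessel product formula. The only cosmetic difference is that the paper records your identity (\ref{prodform}) separately as Proposition~\ref{propD}(2) (proved there from the same Watson formula you invoke) and cites it, whereas you appeal to Watson directly; the Fubini step in the paper is stated more tersely but is the same argument.
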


\vskip.3in

\subsection{The Bessel operator $S_{\mu }$}

In this section we summarize some elementary properties of $S_{\mu }$ on the spaces $\mathcal{H}_{\mu }$ and $\mathcal{H}'_{\mu }$. For most of the proofs we refer the reader to \cite{az}.

\begin{proposition}
\label{notas smu}

\begin{enumerate}
\item  The operator $S_{\mu }:\mathcal{H}_{\mu } \longrightarrow  \mathcal{H}_{\mu }$  is continuous.

\item  If $\lambda \geq 0$,  the operator
\begin{equation*}
\begin{array}{lll}
\mathcal{H}_{\mu } & \longrightarrow  & \mathcal{H}_{\mu } \\
\phi  & \longrightarrow  & (\lambda +x^{2})\phi
\end{array}
\end{equation*}
is continuous.

\item  If $\lambda >0$,  the operator
\begin{equation*}
\begin{array}{lll}
\mathcal{H}_{\mu } & \longrightarrow  & \mathcal{H}_{\mu } \\
\phi  & \longrightarrow  & (\lambda +x^{2})^{-1}\phi
\end{array}
\end{equation*}
is continuous.
\end{enumerate}
\end{proposition}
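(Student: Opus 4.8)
The plan is to reduce all three statements to elementary estimates on the auxiliary operator $N:=x^{-1}D$ acting on the ``profile'' $\psi:=x^{-\mu-\frac12}\phi$, since by the very definition \eqref{eqsemihmu} one has $\gamma_{m,k}^\mu(\phi)=\sup_{x>0}\bigl|x^m N^k\psi\bigr|$. Two facts would be established first: that $N$ is a derivation, $N(fg)=(Nf)g+f(Ng)$ (immediate, because $N=x^{-1}D$ and $D$ is a derivation), and the single commutation relation $Nx^2=x^2N+2$, whence by induction $N^kx^2=x^2N^k+2kN^{k-1}$. Since $\lambda+x^2>0$ on $(0,\infty)$, smoothness of each image is clear, so in every part the content is to bound a seminorm of the image by a \emph{finite} linear combination of seminorms of $\phi$; this simultaneously gives continuity and well-definedness (membership in $\mathcal{H}_\mu$).

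For part (1), I would use the factorizations \eqref{besselOp1}--\eqref{besselOp2} to verify the identity $x^{-\mu-\frac12}S_\mu\phi=\Delta_\mu\psi$, and then rewrite $\Delta_\mu$ through $N$:
\[
\Delta_\mu\psi=x^2N^2\psi+(2\mu+2)N\psi .
\]
Applying $N^k$ and commuting it past $x^2$ gives $N^k\Delta_\mu\psi=x^2N^{k+2}\psi+(2k+2\mu+2)N^{k+1}\psi$, hence
\[
\gamma_{m,k}^\mu(S_\mu\phi)\le \gamma_{m+2,k+2}^\mu(\phi)+(2k+2\mu+2)\,\gamma_{m,k+1}^\mu(\phi).
\]
Part (2) is the same mechanism without $\Delta_\mu$: from $x^{-\mu-\frac12}(\lambda+x^2)\phi=(\lambda+x^2)\psi$ and $N^k(\lambda+x^2)=\lambda N^k+x^2N^k+2kN^{k-1}$ one gets $\gamma_{m,k}^\mu((\lambda+x^2)\phi)\le \lambda\,\gamma_{m,k}^\mu(\phi)+\gamma_{m+2,k}^\mu(\phi)+2k\,\gamma_{m,k-1}^\mu(\phi)$.

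Part (3) is the only step requiring a new ingredient, and it is where the hypothesis $\lambda>0$ is essential. Here I would first record the one-line recursion $N[(\lambda+x^2)^{-n}]=-2n(\lambda+x^2)^{-n-1}$, which iterates to the closed form
\[
N^j\bigl[(\lambda+x^2)^{-1}\bigr]=(-2)^j\,j!\,(\lambda+x^2)^{-(j+1)} .
\]
Using that $N$ is a derivation, the Leibniz expansion gives
\[
x^mN^k\bigl[(\lambda+x^2)^{-1}\psi\bigr]=\sum_{j=0}^k\binom{k}{j}(-2)^j j!\,(\lambda+x^2)^{-(j+1)}\,x^mN^{k-j}\psi .
\]
The crucial point is that $\lambda>0$ makes every weight bounded, $(\lambda+x^2)^{-(j+1)}\le\lambda^{-(j+1)}$ on $(0,\infty)$, so taking suprema yields
\[
\gamma_{m,k}^\mu\bigl((\lambda+x^2)^{-1}\phi\bigr)\le\sum_{j=0}^k\binom{k}{j}2^j j!\,\lambda^{-(j+1)}\,\gamma_{m,k-j}^\mu(\phi),
\]
a finite linear combination of seminorms of $\phi$. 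Thus the main obstacle is localized entirely in part (3): for $\lambda=0$ the weights $(\lambda+x^2)^{-(j+1)}=x^{-2(j+1)}$ blow up at the origin and the estimate collapses, so the argument must use $\lambda>0$ to keep the negative powers of $\lambda+x^2$ bounded. The rest is clerical bookkeeping — tracking the binomial coefficients and checking the boundary case $k=0$, where the $N^{k-1}$ contributions in parts (1)--(2) are simply absent.
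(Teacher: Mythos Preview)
Your argument is correct in all three parts: the derivation property of $N=x^{-1}D$, the commutation $N^kx^2=x^2N^k+2kN^{k-1}$, the identity $x^{-\mu-\frac12}S_\mu\phi=\Delta_\mu\psi=x^2N^2\psi+(2\mu+2)N\psi$, the recursion $N^j[(\lambda+x^2)^{-1}]=(-2)^jj!(\lambda+x^2)^{-(j+1)}$, and the resulting seminorm bounds all check out, including the use of $\lambda>0$ in part~(3) to make $(\lambda+x^2)^{-(j+1)}\le\lambda^{-(j+1)}$.

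As for comparison with the paper: the paper does not actually prove this proposition. At the start of the subsection it states that ``for most of the proofs we refer the reader to [az]'' (Zemanian), and Proposition~\ref{notas smu} is one of those results stated without proof. Your direct seminorm computation is essentially the standard argument one finds in Zemanian's treatment of $\mathcal{H}_\mu$, so you have supplied what the paper only cites.
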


\begin{proposition}
\label{h mu s mu} Let $\phi \in \mathcal{H}_{\mu }$, then

\begin{enumerate}
\item  $\left( \mathit{h}_{\mu }S_{\mu }\phi \right) (x)=-x^{2}\left(\mathit{h}_{\mu}\phi \right) (x),\quad x\in (0,\infty ).$

\item  $\left( S_{\mu }\mathit{h}_{\mu }\phi \right) =\mathit{h}_{\mu}(-y^{2}\phi (y)).$
\end{enumerate}
\end{proposition}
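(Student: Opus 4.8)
The plan is to exploit that the Hankel kernel $\sqrt{xy}\,J_{\mu}(xy)$ is a (generalized) eigenfunction of $S_{\mu}$ in each of its variables. First I would record the eigenvalue relation: for each fixed $y>0$, with $S_{\mu}$ acting in the variable $x$,
\[
S_{\mu}\bigl[\sqrt{xy}\,J_{\mu}(xy)\bigr]=-y^{2}\,\sqrt{xy}\,J_{\mu}(xy).
\]
Since $\frac{4\mu^{2}-1}{4}=\mu^{2}-\frac14$, this is a direct consequence of Bessel's equation $z^{2}J_{\mu}''(z)+zJ_{\mu}'(z)+(z^{2}-\mu^{2})J_{\mu}(z)=0$: setting $g(x)=x^{1/2}J_{\mu}(xy)$, differentiating twice and eliminating $J_{\mu}''(xy)$ through the ODE, the first-derivative contributions cancel and the $x^{-3/2}$ terms combine to cancel exactly the potential $\tfrac{\mu^{2}-1/4}{x^{2}}\,g$, leaving $-y^{2}g$. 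By the symmetry of the kernel under $x\leftrightarrow y$, the same identity holds with $x$ and $y$ interchanged, that is $S_{\mu,y}[\sqrt{xy}\,J_{\mu}(xy)]=-x^{2}\sqrt{xy}\,J_{\mu}(xy)$.

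For part (1) I would write
\[
(\mathit{h}_{\mu}S_{\mu}\phi)(x)=\int_{0}^{\infty}\sqrt{xy}\,J_{\mu}(xy)\,(S_{\mu}\phi)(y)\,dy
\]
and transfer $S_{\mu}$ from $\phi$ onto the kernel. The cleanest route uses the factored form $S_{\mu}=y^{-\mu-\frac12}\frac{d}{dy}\,y^{2\mu+1}\frac{d}{dy}\,y^{-\mu-\frac12}$ from (\ref{besselOp2}): writing $u(y)=y^{-\mu-\frac12}\phi(y)$ and $V(y)=y^{-\mu-\frac12}\sqrt{xy}\,J_{\mu}(xy)=x^{\mu+\frac12}(xy)^{-\mu}J_{\mu}(xy)$, two integrations by parts against the weight $y^{2\mu+1}$ give
\[
\int_{0}^{\infty}V\,(y^{2\mu+1}u')'\,dy=\int_{0}^{\infty}(y^{2\mu+1}V')'\,u\,dy,
\]
which is precisely $\int_{0}^{\infty}\sqrt{xy}\,J_{\mu}(xy)\,(S_{\mu}\phi)(y)\,dy=\int_{0}^{\infty}\bigl(S_{\mu,y}\sqrt{xy}\,J_{\mu}(xy)\bigr)\,\phi(y)\,dy$ once the boundary terms are seen to vanish. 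Inserting the eigenvalue relation $S_{\mu,y}[\sqrt{xy}\,J_{\mu}(xy)]=-x^{2}\sqrt{xy}\,J_{\mu}(xy)$ and extracting the constant factor $-x^{2}$ yields $-x^{2}(\mathit{h}_{\mu}\phi)(x)$, which is (1).

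For part (2) I would deduce it from (1) together with the fact that $\mathit{h}_{\mu}$ is an involution on $\mathcal{H}_{\mu}$ (see Remark \ref{notatra hankel} and \cite{az}), $\mathit{h}_{\mu}\mathit{h}_{\mu}\psi=\psi$. Applying (1) to $\psi=\mathit{h}_{\mu}\phi\in\mathcal{H}_{\mu}$ gives $\mathit{h}_{\mu}S_{\mu}\mathit{h}_{\mu}\phi=-x^{2}\,\mathit{h}_{\mu}\mathit{h}_{\mu}\phi=-x^{2}\phi$; applying $\mathit{h}_{\mu}$ once more and using the involution property produces $S_{\mu}\mathit{h}_{\mu}\phi=\mathit{h}_{\mu}(-y^{2}\phi(y))$, which is (2). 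Alternatively one may differentiate under the integral sign in $(\mathit{h}_{\mu}\phi)(x)$ and apply the eigenvalue relation in the $x$-variable directly.

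The main obstacle is the analytic justification of these formal steps, namely the vanishing of the boundary terms (and, for the alternative route, differentiation under the integral). Both hinge on the behaviour of the kernel and of $\phi\in\mathcal{H}_{\mu}$ near $0$ and $\infty$. As $y\to\infty$, $\sqrt{xy}\,J_{\mu}(xy)$ and its $y$-derivative stay bounded for fixed $x$, while the seminorms $\gamma^{\mu}_{m,k}$ force $u=y^{-\mu-\frac12}\phi$ and its derivatives to decay faster than any power of $y$; as $y\to0^{+}$ the factored form is decisive, since the weight $y^{2\mu+1}\to0$ (this is where $\mu>-\frac12$ enters, together with $(xy)^{-\mu}J_{\mu}(xy)$ being bounded at the origin) annihilates the boundary contributions even though the individual terms may be singular. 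One should also note that $S_{\mu}\phi\in\mathcal{H}_{\mu}$ by Proposition \ref{notas smu}, so both members of (1) and (2) are genuine elements of $\mathcal{H}_{\mu}$ and the identities hold pointwise on $(0,\infty)$.
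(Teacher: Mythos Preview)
The paper does not actually supply a proof of this proposition: the surrounding passage states that ``for most of the proofs we refer the reader to \cite{az}'' and this result is simply quoted from Zemanian. Your argument is correct and is in fact the standard proof one finds there (Zemanian, \S5.4): verify from Bessel's differential equation that the kernel $\sqrt{xy}\,J_{\mu}(xy)$ satisfies $S_{\mu,y}[\sqrt{xy}\,J_{\mu}(xy)]=-x^{2}\sqrt{xy}\,J_{\mu}(xy)$, then pass $S_{\mu}$ through the integral by two integrations by parts in the self-adjoint (factored) form, the boundary terms vanishing because $\phi\in\mathcal{H}_{\mu}$ forces rapid decay at infinity and the weight $y^{2\mu+1}$ (with $\mu>-\tfrac12$) kills the contribution at the origin; part (2) then follows either by the same computation or, as you do, from (1) and the involution property $\mathit{h}_{\mu}^{2}=\mathrm{Id}$ on $\mathcal{H}_{\mu}$.
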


\vskip.2in

\begin{proposition}\label{operhmu} The following continuous operators in $\mathcal{H}_{\mu }$ can be extended to $\mathcal{H}_{\mu }^{\prime }$  in the following way:

\begin{enumerate}
\item  The Hankel transform $\mathit{h}_{\mu }$
\begin{equation*}
(\mathit{h}_{\mu }u,\phi )=(u,\mathit{h}_{\mu }\phi ),\quad u\in \mathcal{H}_{\mu }^{\prime },\phi \in \mathcal{H}_{\mu },
\end{equation*}

 and $\mathit{h}_{\mu }:\mathcal{H}_{\mu}^{\prime }\rightarrow \mathcal{H}_{\mu }^{\prime }$  is a bijective mapping .

\item  The diferential operator  $S_{\mu }$
\begin{equation*}
(S_{\mu }u,\phi )=(u,S_{\mu }\phi ),\quad u\in \mathcal{H}_{\mu }^{\prime
},\phi \in \mathcal{H}_{\mu }.
\end{equation*}

\item  The product of $(\lambda +x^{2})$ for $\lambda \geq 0$
\begin{equation*}
((\lambda +x^{2})u,\phi )=(u,(\lambda +x^{2})\phi ),\quad u\in \mathcal{H}%
_{\mu }^{\prime },\phi \in \mathcal{H}_{\mu }.
\end{equation*}

\item  The product of $(\lambda +x^{2})^{-1}$ for $\lambda >0$
\begin{equation*}
((\lambda +x^{2})^{-1}u,\phi )=(u,(\lambda +x^{2})^{-1}\phi ),\quad u\in
\mathcal{H}_{\mu }^{\prime },\phi \in \mathcal{H}_{\mu }.
\end{equation*}
\end{enumerate}
\end{proposition}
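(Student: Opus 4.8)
The plan is to treat all four items by the single device of \emph{transposition}. Each operator $A$ appearing in the list is a continuous linear self-map $A:\mathcal{H}_\mu\to\mathcal{H}_\mu$: for $S_\mu$ and for multiplication by $(\lambda+x^2)$ and $(\lambda+x^2)^{-1}$ this is precisely Proposition \ref{notas smu}, while for $h_\mu$ it is the automorphism property recalled in Remark \ref{notatra hankel}. In each case the proposed formula has the shape $(Au,\phi)=(u,A\phi)$. So the first step is to check that, for fixed $u\in\mathcal{H}_\mu'$, the assignment $\phi\mapsto(u,A\phi)$ is a continuous linear functional on $\mathcal{H}_\mu$. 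This is immediate, since it is the composition of the continuous map $A$ with the continuous functional $u$; hence the object $Au$ so defined belongs to $\mathcal{H}_\mu'$, and $u\mapsto Au$ is a well-defined linear map $\mathcal{H}_\mu'\to\mathcal{H}_\mu'$.

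The second step is to verify that these maps genuinely \emph{extend} the original operators, i.e.\ that on the regular elements — in particular on the copy of $\mathcal{H}_\mu$ sitting inside $\mathcal{H}_\mu'$ through Definition \ref{regular h' mu} — the transposed definition reproduces the pointwise action. Writing $T_f$ for the regular element attached to $f$, I must show $A\,T_f=T_{Af}$, which unwinds to the \emph{formal self-adjointness} identity $\int_0^\infty(Af)\phi=\int_0^\infty f\,(A\phi)$ for $\phi\in\mathcal{H}_\mu$. For the two multiplication operators this is trivial, as $(\lambda+x^2)^{\pm1}$ is a real factor that may be moved freely across the integrand. For the Hankel transform it follows from Fubini's theorem together with the symmetry of the kernel $\sqrt{xy}\,J_\mu(xy)$ in $x$ and $y$; the interchange of integrals is legitimate because Proposition \ref{espacio hm} places the functions involved in $L^1(sr)$, which is exactly where $h_\mu$ is given by an absolutely convergent integral (Remark \ref{notatra hankel}).

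The delicate case, and the step I expect to be the main obstacle, is the self-adjointness of $S_\mu$. Here I would use the factored form $S_\mu=x^{-\mu-\frac12}\frac{d}{dx}\,x^{2\mu+1}\frac{d}{dx}\,x^{-\mu-\frac12}$ from (\ref{besselOp2}) and integrate by parts twice. Setting $g=x^{-\mu-\frac12}f$ and $\psi=x^{-\mu-\frac12}\phi$, the identity $\int(S_\mu f)\phi=\int f(S_\mu\phi)$ reduces to the classical Sturm--Liouville symmetry $\int_0^\infty(x^{2\mu+1}g')'\psi=\int_0^\infty g\,(x^{2\mu+1}\psi')'$, and the real work is to show that the boundary contribution $\bigl[x^{2\mu+1}(g'\psi-g\psi')\bigr]_0^\infty$ vanishes. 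At infinity this is controlled by the rapid decay encoded in the seminorms $\gamma_{m,k}^\mu$. Near the origin the care lies in the fact that the quantities the seminorms actually bound are $(x^{-1}D)^k(x^{-\mu-\frac12}\phi)$, so the vanishing must be read off from $g$ and $\psi$ together with their first derivatives rather than from $\phi$ directly; one checks from $\gamma_{0,0}^\mu$ and $\gamma_{0,1}^\mu$ that $g,\psi$ are bounded and $g',\psi'=O(x)$, whence the factor $x^{2\mu+1}$ with $\mu>-\frac12$ forces both products to tend to $0$ as $x\to0^+$.

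Finally, item (1) also asserts bijectivity of the extended Hankel transform on $\mathcal{H}_\mu'$. Since $h_\mu:\mathcal{H}_\mu\to\mathcal{H}_\mu$ is a topological automorphism (Remark \ref{notatra hankel}), its transpose is automatically a bijection of $\mathcal{H}_\mu'$, with inverse the transpose of $h_\mu^{-1}$. Concretely, if one uses that $h_\mu$ is an involution, $h_\mu h_\mu=\mathrm{id}$ on $\mathcal{H}_\mu$, the relation transposes directly: $(h_\mu h_\mu u,\phi)=(u,h_\mu h_\mu\phi)=(u,\phi)$ for all $u\in\mathcal{H}_\mu'$ and $\phi\in\mathcal{H}_\mu$, so the extension is its own inverse and in particular bijective. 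This closes the argument.
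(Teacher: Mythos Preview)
The paper does not actually prove this proposition: the surrounding subsection opens with ``For most of the proofs we refer the reader to \cite{az}'', and Proposition \ref{operhmu} is stated without argument. So there is no in-paper proof to compare against; your transposition argument is precisely the standard one (and is what one finds in Zemanian).

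Your proof is correct. Two small remarks that would tighten it and tie it back to the paper itself. First, the Parseval-type identity you derive for $h_\mu$ via Fubini is already recorded in the Appendix as equation (\ref{parseval}) in Proposition \ref{propplan}, valid for $f,g\in L^1(sr)$; since $\mathcal{H}_\mu\subset L^1(sr)$ by Proposition \ref{espacio hm}, you may simply cite it. Second, the involution property $h_\mu h_\mu=\mathrm{id}$ that you invoke for bijectivity is exactly Remark \ref{inverHankelHmu}. Your treatment of the boundary terms for $S_\mu$ is the substantive part and is handled correctly: the seminorms $\gamma_{0,0}^\mu$, $\gamma_{0,1}^\mu$ give $g,\psi$ bounded and $g',\psi'=O(x)$ near $0$, so $x^{2\mu+1}(g'\psi-g\psi')=O(x^{2\mu+2})\to0$ since $\mu>-\tfrac12$, and the seminorms $\gamma_{m,k}^\mu$ with large $m$ kill the contribution at infinity.
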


\begin{proposition}\label{propHankelSmu} If $u\in \mathcal{H}_{\mu }^{\prime }$, then
\begin{enumerate}
\item $\mathit{h}_{\mu }S_{\mu }u =-x^{2}\mathit{h}_{\mu }u $
\item $S_{\mu }\mathit{h}_{\mu }u =\mathit{h}_{\mu }(-y^{2}u)$
\end{enumerate}
\end{proposition}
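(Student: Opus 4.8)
The plan is to prove both identities by transposition, reducing each to the corresponding test-function identity already recorded in Proposition \ref{h mu s mu} and using only the defining relations for the extended operators given in Proposition \ref{operhmu}. Every operator involved---$h_\mu$, $S_\mu$, and multiplication by $x^2$ (the latter being the $\lambda=0$ case of multiplication by $\lambda+x^2$)---is a continuous endomorphism of $\mathcal{H}_\mu$ (Propositions \ref{notas smu} and \ref{h mu s mu}), and each has been extended to $\mathcal{H}_\mu^\prime$ precisely as its transpose. Consequently the argument amounts to moving each operator across the pairing $(\cdot,\cdot)$ one step at a time and collapsing the composition on the test-function side.

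For item (1), fix $u\in\mathcal{H}_\mu^\prime$ and $\phi\in\mathcal{H}_\mu$. First I would write $(h_\mu S_\mu u,\phi)=(S_\mu u,h_\mu\phi)=(u,S_\mu h_\mu\phi)$, using the definitions of the extended $h_\mu$ and $S_\mu$ in turn. By Proposition \ref{h mu s mu}(2) the inner composition satisfies $S_\mu h_\mu\phi=h_\mu(-y^2\phi)$, so the pairing becomes $(u,h_\mu(-y^2\phi))$. Transposing $h_\mu$ back and then multiplication by $x^2$ gives $(u,h_\mu(-y^2\phi))=(h_\mu u,-y^2\phi)=(-x^2 h_\mu u,\phi)$, which is the asserted identity; the change of dummy variable from $y$ to $x$ is immaterial, since it denotes the same multiplication operator.

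For item (2) the computation is symmetric. Starting from $(S_\mu h_\mu u,\phi)=(h_\mu u,S_\mu\phi)=(u,h_\mu S_\mu\phi)$ and invoking Proposition \ref{h mu s mu}(1), namely $h_\mu S_\mu\phi=-x^2 h_\mu\phi$, I would obtain $(u,-x^2 h_\mu\phi)=(-x^2 u,h_\mu\phi)=(h_\mu(-y^2 u),\phi)$, which yields $S_\mu h_\mu u=h_\mu(-y^2 u)$.

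There is essentially no hard step here; the proposition is the distributional shadow of Proposition \ref{h mu s mu}. The only points requiring care are bookkeeping: making sure each transposition is legitimate, which is guaranteed because the relevant operators are continuous on $\mathcal{H}_\mu$ so that their adjoints on $\mathcal{H}_\mu^\prime$ are well defined as in Proposition \ref{operhmu}, and keeping the dummy variables $x$ and $y$ consistent so that the final expressions match the stated form.
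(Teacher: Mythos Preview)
Your argument is correct and is precisely the standard transposition proof one expects here. The paper itself does not spell out a proof of this proposition; it is listed among the elementary properties of $S_\mu$ for which the reader is referred to \cite{az}, so there is no substantive comparison to make beyond noting that your duality computation is exactly the natural one.
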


\begin{proposition}\label{prophankelsmumas1} The following equalities are valid in $\mathcal{H}_{\mu }$ and $\mathcal{H}_{\mu }^{\prime }$ for $n=1,2,\dots$ \begin{enumerate}
\item $(-S_{\mu }+\lambda)^{n}\mathit{h}_{\mu }=\mathit{h}_{\mu}(y^{2}+\lambda)^{n}$.\\

\noindent Moreover if $\lambda> 0$
\item $\mathit{h}_{\mu }(-S_{\mu }+\lambda)^{-n}=(y^{2}+\lambda)^{-n}\mathit{h}_{\mu}$.
\item $\mathit{h}_{\mu }(-S_{\mu }(-S_{\mu }+\lambda)^{-1})^{n}=y^{2n}(y^{2}+\lambda)^{-n}\mathit{h}_{\mu}$

\end{enumerate}
\end{proposition}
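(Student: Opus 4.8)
The plan is to deduce every identity from two first-order relations that are already available and then iterate. For $\phi\in\mathcal{H}_{\mu}$, Proposition \ref{h mu s mu}(2) gives $S_{\mu}\mathit{h}_{\mu}\phi=\mathit{h}_{\mu}(-y^{2}\phi)$, so that
\begin{equation*}
(-S_{\mu}+\lambda)\mathit{h}_{\mu}=\mathit{h}_{\mu}(y^{2}+\lambda),
\end{equation*}
while Proposition \ref{h mu s mu}(1) gives $\mathit{h}_{\mu}S_{\mu}\phi=-x^{2}\mathit{h}_{\mu}\phi$, so that
\begin{equation*}
\mathit{h}_{\mu}(-S_{\mu}+\lambda)=(x^{2}+\lambda)\mathit{h}_{\mu}.
\end{equation*}
Proposition \ref{propHankelSmu} provides these same two relations on $\mathcal{H}_{\mu}^{\prime}$, so it suffices to argue on $\mathcal{H}_{\mu}$ and transport the conclusions to the dual through the duality defining the operators in Proposition \ref{operhmu}. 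For part (1) I would induct on $n$: assuming $(-S_{\mu}+\lambda)^{n}\mathit{h}_{\mu}=\mathit{h}_{\mu}(y^{2}+\lambda)^{n}$, apply $(-S_{\mu}+\lambda)$ on the left, use the first relation, and invoke the continuity of multiplication by $(y^{2}+\lambda)$ (Proposition \ref{notas smu}(2)) together with the fact that $\mathit{h}_{\mu}$ is an automorphism of $\mathcal{H}_{\mu}$ (Remark \ref{notatra hankel}) to land the case $n+1$.

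Before treating the negative powers I must check that $(-S_{\mu}+\lambda)^{-1}$ makes sense for $\lambda>0$. Rearranging the first relation as $-S_{\mu}+\lambda=\mathit{h}_{\mu}(y^{2}+\lambda)\mathit{h}_{\mu}^{-1}$ exhibits $-S_{\mu}+\lambda$ as a composition of the automorphism $\mathit{h}_{\mu}$ with multiplication by $(y^{2}+\lambda)$; since for $\lambda>0$ the latter is a continuous bijection whose inverse is multiplication by $(y^{2}+\lambda)^{-1}$ (Proposition \ref{notas smu}(3)), the operator $-S_{\mu}+\lambda$ is invertible on $\mathcal{H}_{\mu}$ with $(-S_{\mu}+\lambda)^{-1}=\mathit{h}_{\mu}(y^{2}+\lambda)^{-1}\mathit{h}_{\mu}^{-1}$, and likewise on $\mathcal{H}_{\mu}^{\prime}$. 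This legitimizes $(-S_{\mu}+\lambda)^{-n}$. Part (2) then follows by inverting the iterate of the second relation: from $\mathit{h}_{\mu}(-S_{\mu}+\lambda)^{n}=(x^{2}+\lambda)^{n}\mathit{h}_{\mu}$ (the analogue of part (1), proved identically) I would right-multiply by $(-S_{\mu}+\lambda)^{-n}$ and left-multiply by $(x^{2}+\lambda)^{-n}$ to obtain $(x^{2}+\lambda)^{-n}\mathit{h}_{\mu}=\mathit{h}_{\mu}(-S_{\mu}+\lambda)^{-n}$, which is the asserted identity after renaming the dummy multiplier variable.

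For part (3) I would exploit that $-S_{\mu}$ and $(-S_{\mu}+\lambda)^{-1}$ are both functions of $S_{\mu}$ and hence commute, so that $\bigl(-S_{\mu}(-S_{\mu}+\lambda)^{-1}\bigr)^{n}=(-S_{\mu})^{n}(-S_{\mu}+\lambda)^{-n}$. Applying $\mathit{h}_{\mu}$ and iterating the second relation $n$ times gives $\mathit{h}_{\mu}(-S_{\mu})^{n}=x^{2n}\mathit{h}_{\mu}$, after which part (2) yields $\mathit{h}_{\mu}(-S_{\mu})^{n}(-S_{\mu}+\lambda)^{-n}=x^{2n}(x^{2}+\lambda)^{-n}\mathit{h}_{\mu}=y^{2n}(y^{2}+\lambda)^{-n}\mathit{h}_{\mu}$. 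The dual statements follow in each case by transposing the identities on $\mathcal{H}_{\mu}$ through the pairing, since the extensions of $\mathit{h}_{\mu}$, $S_{\mu}$ and the multiplication operators to $\mathcal{H}_{\mu}^{\prime}$ are exactly their adjoints (Proposition \ref{operhmu}). The only genuinely delicate point is the invertibility step: the rest is bookkeeping with the two base relations, but the negative powers in (2) and (3) are meaningful only once $-S_{\mu}+\lambda$ is known to be a topological isomorphism, which is precisely why the hypothesis $\lambda>0$ and the continuity of multiplication by $(y^{2}+\lambda)^{-1}$ are essential.
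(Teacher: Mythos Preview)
Your proposal is correct and follows essentially the same route as the paper: both derive (1) from the base relation $S_{\mu}\mathit{h}_{\mu}=\mathit{h}_{\mu}(-y^{2}\cdot)$ by induction, identify $(-S_{\mu}+\lambda)^{-1}$ as $\mathit{h}_{\mu}(y^{2}+\lambda)^{-1}\mathit{h}_{\mu}^{-1}$ to make sense of the negative powers, and then obtain (2) and (3) by iterating the first-order relations. Your write-up is in fact more explicit than the paper's about the invertibility step and the passage to $\mathcal{H}_{\mu}^{\prime}$ by duality, but the underlying argument is the same.
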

\begin{proof} (1)  is immediate consequence of  item (2) of  Proposition \ref{propHankelSmu} .\\
Noting that  $\mathit{h}_{\mu }(y^{2}+\lambda)^{-1}\mathit{h}_{\mu }$ is  inverse operator of $-S_{\mu }+\lambda$, then (2) is obtained with a simple aplication of Propositon \ref{invHankel} (see Appendix) and induction over n. \\
Equality (3) follows  immediately by item (2) of Proposition \ref{propHankelSmu} and induction over n.
\end{proof}
\vskip.2in
\section{Previous results about similar operators and no-negativity}

Let $X$ and $Y$ be Banach spaces. Suppose that there is an isometric isomorphism $T:X\rightarrow Y$ and let $T^{-1}:Y\rightarrow X$ its inverse. Let $A$ a linear operator $A:D(A)\subset X\rightarrow X$ then we can consider the operator $B=TAT^{-1}$, $B:D(B)\subset Y\rightarrow Y$ with domain
$$D(B)=\{x\in Y: T^{-1}x\in D(A)\},$$
Under these conditions we will say that such operators are \textit{similars}.
\begin{proposition} \label{propopsimilar} Let $A$ and $B$ similar operators. Then $A$ is non negative if and only if  so is $B$.
\end{proposition}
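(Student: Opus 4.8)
**The plan is to unpack the definition of non-negativity for an operator and show it transfers across the similarity relation.**

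First I would recall what non-negativity means in this Banach-space context: an operator $A:D(A)\subset X\rightarrow X$ is non-negative when $(-\infty,0)$ lies in the resolvent set of $A$ and the resolvents satisfy a uniform bound of the form $\|\lambda(\lambda+A)^{-1}\|\leq M$ for all $\lambda>0$ (equivalently $\|(\lambda+A)^{-1}\|\leq M/\lambda$). The whole proof then rests on the observation that conjugation by the isometric isomorphism $T$ preserves both the spectral condition and the norm estimate. So the heart of the argument is the algebraic identity relating the resolvents of $A$ and $B=TAT^{-1}$.

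The key computation I would carry out is that for $\lambda>0$,
\begin{equation*}
(\lambda+B)^{-1}=T(\lambda+A)^{-1}T^{-1}.
\end{equation*}
To justify this I would first check that $\lambda+B=T(\lambda+A)T^{-1}$ as operators on $Y$, using that $T$ commutes with scalar multiples of the identity and that the domains match: by definition $D(B)=\{y\in Y:T^{-1}y\in D(A)\}=T(D(A))$, so $T(\lambda+A)T^{-1}$ is well defined exactly on $D(B)$. Then whenever $(\lambda+A)^{-1}$ exists as a bounded operator on $X$, the operator $T(\lambda+A)^{-1}T^{-1}$ is a bounded two-sided inverse of $\lambda+B$, which shows $-\lambda$ lies in the resolvent set of $B$ precisely when it lies in that of $A$. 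This gives the equivalence of the spectral conditions.

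For the norm bound, I would invoke that $T$ is an \emph{isometric} isomorphism, so $\|Tx\|_{Y}=\|x\|_{X}$ and likewise for $T^{-1}$; hence
\begin{equation*}
\|\lambda(\lambda+B)^{-1}\|=\|T\,\lambda(\lambda+A)^{-1}\,T^{-1}\|=\|\lambda(\lambda+A)^{-1}\|,
\end{equation*}
so the same constant $M$ works for both operators. Combining the resolvent-set equivalence with this equality of resolvent norms yields that $A$ is non-negative if and only if $B$ is, and by symmetry (since $A=T^{-1}BT$ with $T^{-1}$ also an isometric isomorphism) the roles of $A$ and $B$ are interchangeable. I do not expect any serious obstacle here; the only point requiring mild care is the bookkeeping of domains—verifying that $\lambda+B$ really has domain $D(B)$ and that the conjugated resolvent genuinely maps $Y$ into $D(B)$—and that isometry (rather than mere boundedness of $T$ and $T^{-1}$) is what makes the resolvent constant identical rather than merely comparable.
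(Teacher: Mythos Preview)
Your proposal is correct and follows essentially the same approach as the paper: the paper's proof consists of the single observation that $(zId-B)^{-1}=T(zId-A)^{-1}T^{-1}$ and declares the result immediate. You have simply unpacked this identity more carefully, checking domains and making explicit how the isometry transfers the resolvent norm bound, which is exactly the content the paper leaves implicit.
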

\begin{proof} Let $B=TA T^{-1}$ , the proof is immediate noting that
$$(zId-B)^{-1}=T(zId-A)^{-1}T^{-1},$$
for  a complex number $z$ .
\end{proof}
\vskip.2in
If $A$ is a non negative operator, then for $\alpha\in\mathbb{C}$ such that $\textrm{Re}\:\alpha>0$, $n>\textrm{Re}\:\alpha$ and $\phi\in D(A^{n})$,  the Balakrisnahn operator associated with $A$, can be represented by
$$J_{A}^{\alpha}\phi=\frac{\Gamma(m)}{\Gamma(\alpha)\Gamma(m-\alpha)}\int_{0}^{\infty}\lambda^{\alpha-1}[A(\lambda+A)^{-1}]^{m}\phi\:d\lambda,$$
(see \cite[Proposition 3.1.3, pp.59]{ms}).

 If $A$ is bounded, $J_{A}^{\alpha}$ can be consider as  fractional power of $A$, and in other case we can consider the following representation for fractional power given in \cite[Theorem 5.2.1, pp.114]{ms}),
$$A^{\alpha}=(A+\lambda)^{n}J_{A}^{\alpha}(A+\lambda)^{-n},$$
with $\alpha$, $n$ as above  and $\lambda\in\rho(-A)$.

\noindent When two operators are similar, the fractional powers also meet this property. Thus, we have the following result:

\begin{proposition} \label{propobalakrisnan}Let $A$ and $B$ similar and non negative operators. If $\alpha\in\mathbb{C}$ such that $\textrm{Re}\:\alpha>0$,  then
\begin{equation}\label{eqsimbalak}
J_{B}^{\alpha}=TJ_{A}^{\alpha}T^{-1},
\end{equation}
and
\begin{equation}\label{eqsimilpot}
B^{\alpha}=TA^{\alpha}T^{-1},
\end{equation}

\noindent where $T$ is the isometric isomorphism that verifies $B=TAT^{-1}$.
\end{proposition}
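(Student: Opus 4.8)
The plan is to establish the two identities separately but in parallel, starting from the integral representation of the Balakrishnan operator $J_A^\alpha$ and exploiting the fact that the resolvents of $A$ and $B$ are conjugate under $T$, as already recorded in the proof of Proposition \ref{propopsimilar}. For equation (\ref{eqsimbalak}), I would fix $\alpha$ with $\operatorname{Re}\alpha>0$ and $m>\operatorname{Re}\alpha$, and write out
$$J_B^\alpha\psi=\frac{\Gamma(m)}{\Gamma(\alpha)\Gamma(m-\alpha)}\int_0^\infty\lambda^{\alpha-1}\bigl[B(\lambda+B)^{-1}\bigr]^m\psi\:d\lambda,$$
for $\psi\in D(B^m)$. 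The key algebraic observation is that $B(\lambda+B)^{-1}=TA T^{-1}\,T(\lambda+A)^{-1}T^{-1}=T\,A(\lambda+A)^{-1}\,T^{-1}$, so that $[B(\lambda+B)^{-1}]^m=T[A(\lambda+A)^{-1}]^m T^{-1}$ since the intermediate $T^{-1}T$ factors cancel. Substituting this into the integrand and then pulling the bounded operators $T$ and $T^{-1}$ outside the integral yields $J_B^\alpha=TJ_A^\alpha T^{-1}$ on $D(B^m)$. One should note at the outset that the domains match: $\psi\in D(B^m)$ iff $T^{-1}\psi\in D(A^m)$, by the definition of $D(B)$ iterated $m$ times, so the right-hand side is well defined exactly where the left-hand side is.

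For equation (\ref{eqsimilpot}) I would invoke the representation $A^\alpha=(A+\lambda)^n J_A^\alpha(A+\lambda)^{-n}$ with $\lambda\in\rho(-A)$, and correspondingly $B^\alpha=(B+\lambda)^n J_B^\alpha(B+\lambda)^{-n}$. Here I first check that $\rho(-A)=\rho(-B)$, which is immediate from the resolvent conjugacy, so the same $\lambda$ is admissible for both operators. Then I combine three conjugation facts: $(B+\lambda)^n=T(A+\lambda)^n T^{-1}$ (each factor $B+\lambda=T(A+\lambda)T^{-1}$), the freshly proved (\ref{eqsimbalak}), and $(B+\lambda)^{-n}=T(A+\lambda)^{-n}T^{-1}$. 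Multiplying these three expressions, all the interior $T^{-1}T$ pairs telescope, leaving $B^\alpha=T\,(A+\lambda)^n J_A^\alpha(A+\lambda)^{-n}\,T^{-1}=TA^\alpha T^{-1}$.

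The main obstacle is not the algebra, which is essentially bookkeeping of cancelling conjugations, but the careful handling of unbounded operators: I must verify that the convergence of the Balakrishnan integral for $J_A^\alpha$ transfers to $J_B^\alpha$ (guaranteed because $T$ is an isometric isomorphism, hence bounded with bounded inverse, so it commutes with the strong limit defining the improper integral) and that each conjugation identity holds on the correct domain rather than merely formally. In particular, justifying the interchange $T\int_0^\infty(\cdots)d\lambda=\int_0^\infty T(\cdots)d\lambda$ requires that $T$ be continuous, which is exactly the isometry hypothesis. Once these domain and continuity points are settled, both (\ref{eqsimbalak}) and (\ref{eqsimilpot}) follow by the telescoping cancellation described above.
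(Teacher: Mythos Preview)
Your proposal is correct and follows essentially the same route as the paper: verify that the domains $D(J_B^\alpha)$ and $D(TJ_A^\alpha T^{-1})$ coincide via $B^n=TA^nT^{-1}$, pull $T$ through the Balakrishnan integral by boundedness (the paper just cites ``properties of Bochner integral''), and then obtain (\ref{eqsimilpot}) by the telescoping computation $(B+\lambda)^n J_B^\alpha(B+\lambda)^{-n}=T(A+\lambda)^nT^{-1}\,TJ_A^\alpha T^{-1}\,T(A+\lambda)^{-n}T^{-1}=TA^\alpha T^{-1}$. Your version is somewhat more explicit about the resolvent conjugacy and the equality $\rho(-A)=\rho(-B)$, but the argument is the same.
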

\begin{proof} We observe that if $B=TAT^{-1}$ then $B^{n}=TA^{n}T^{-1}$ and
$$D(T J_{A}^{\alpha}T^{-1})=\{x\in Y:T^{-1} x\in D(J_{A}^{\alpha})\}=\{x\in Y:T^{-1} x\in D(A^{n})\}=D(J_{B}^{\alpha}),$$

\noindent and (\ref{eqsimbalak}) is immediate from properties of Bochner integral.
In (\ref{eqsimilpot})  the equality of domains is evident and
$$B^{\alpha}=(B+\lambda)^{n}J_{B}^{\alpha}(B+\lambda)^{-n}=(TAT^{-1}+\lambda)^{n}TJ_{A}^{\alpha}T^{-1}(TAT^{-1}+\lambda)^{-n}=$$
$$=T(A+\lambda)^{n}T^{-1}TJ_{A}^{\alpha}T^{-1}T(A+\lambda)^{-n}T^{-1}=TA^{\alpha}T^{-1}.$$
\end{proof}

\vskip.2in
Now, we consider  a Hausdorff locally convex space $X$ with  a direct family of seminorms $\{\|\|_{X,\nu}\}_{\nu\in \mathcal{A}}$. Let $Y$ be a linear space such that there is a linear isomorphism $L:X\rightarrow Y$. Then, we can define the following family of seminorms in Y:
$$\|y\|_{Y,\nu}=\|L^{-1}(y)\|_{X,\nu}.$$
Thus, the linear space $Y$ with the directed family of seminorms $\{\|\|_{Y,\nu}\}_{\nu\in \mathcal{A}}$ is a Hausdorff locally convex space  and $X$ and $Y$ are isomorphic. The propositions \ref{propopsimilar} and \ref{propobalakrisnan} can be easily extended to the case of non negative operators in locally convex spaces.

\subsection{Applications to  Bessel operator}

Given  $\mu >-\frac{1}{2}$, we consider the differential operator  given by (\ref{besselOp1})  defined in $(0,\infty)$.

We are now going to apply the observations considered in the
previous section to the operator $\Delta _{\mu }$.  First, we calculate the Sturm-Liouville form of $\Delta _{\mu }$, thereby obtaining the
operator
\begin{equation*}
T_{\mu }=x^{2\mu +1}\Delta _{\mu }
\end{equation*}
which is formally self-adjoint. Operators of type $fT_{\mu }f$, with
$f\in C^{\infty }(0,\infty)$, are still formally self-adjoint. If we
want the new operator to be similar to $\Delta _{\mu }$, namely type
$r^{-1}\Delta _{\mu }r$, we have to consider $r=x^{-\mu
-\frac{1}{2}}$. Thus the operator
\begin{equation}\label{def smu}
S_{\mu }=x^{\mu +\frac{1}{2}}\:\Delta _{\mu }\:x^{-\mu
-\frac{1}{2}}
\end{equation}
is formally self-adjoint and similar to $\Delta _{\mu }$ and hence with the same espectral properties.

Since  mappings  $L_{r}:L^{p}(r^{p}s)\rightarrow L^{p}(s)$ with $1\leq p<\infty$ (or $L_{r}:L^{\infty}(r)\rightarrow L^{\infty}$)  given by $L_{r}(f)=rf$  are isometric isomorphisms, if we consider the part of the distributional operator $\Delta_{\mu }$ in the spaces $L^{p}(s)$ (or $ L^{\infty}$), i.e., the operator with domain
 $$D((\Delta_{\mu })_{L^{p}(s)})=\left\{ f\in
L^{p}(s):\Delta_{\mu }f\in L^{p}(s)\right\},$$
and given by  $(\Delta_{\mu})_{L^{p}(s)}f=\Delta_{\mu}f$. Then, applying the ideas developed in the previous section,  it is enough to study the operator  $S_{\mu }$ in the spaces $L^{p}(sr^{p})$ (or $L^{\infty}(r)$).

\section{Fractional powers of $S_{\mu }$ in Lebesgue spaces}

Now,  we study the non negativity of the part  in  $L^{p}(sr^{p})$ and in $L^{\infty }(r)$ of distributional differential operator $S_{\mu}$
given by (\ref{besselOp2}).

Let $1\leq p<\infty$. We will denote by $S_{\mu,p }$ the part of $S_{\mu }$ in $L^{p}(sr^{p})$;  i.e. the operator $S_{\mu }$ with domain
\begin{equation*}
D\left( S_{\mu,p }\right) =\left\{ f\in
L^{p}(sr^{p}):S_{\mu }f\in L^{p}(sr^{p})\right\}
\end{equation*}
and given by  $ S_{\mu,p }f=S_{\mu }f.$
\vskip.1in

\noindent Analogously, by $S_{\mu,\infty }$ we will denote the part of  $S_{\mu }$ in $L^{\infty }(r)$;  namely, the operator $S_{\mu }$
with domain
\begin{equation*}
D\left( S_{\mu,\infty}\right) =\left\{ f\in L^{\infty
}(r):S_{\mu }f\in L^{\infty }(r)\right\}
\end{equation*}
and $S_{\mu,\infty}f=S_{\mu}f.$

\vskip.2in

In order to study the non negativity of operators $-S_{\mu,\infty}$ and $-S_{\mu,p}$ we consider the following function:

\begin{equation}
\mathcal{K}_{\nu }(x)=\frac{1}{2}\Bigl(\frac{1}{2}x\Bigr)^{\nu }\int_{0}^{\infty
}e^{-t-\frac{x^{2}}{4t}}\frac{dt}{t^{\nu +1}},  \label{rep integ 1}
\end{equation}
for $x\in (0,\infty)$. Since for $\nu < 0$
$$\int_{0}^{\infty
}e^{-t-\frac{x^{2}}{4t}}t^{-\nu -1}\:dt<\int_{0}^{\infty
}e^{-t}t^{-\nu -1}\:dt<\infty$$

\noindent and for $\nu\geq 0$ the function $e^{-t-\frac{x^{2}}{4t}}t^{-\nu -1}$ is bounded in a neighborhood of zero, $\mathcal{K}_{\nu}$ is well defined for $\nu\in \mathbb{R}$ and $\mathcal{K}_{\nu}>0$.

\begin{remark} For non-integer values of $\nu $ , $\mathcal{K}_{\nu}$  (see \cite[(15), pp. 183]{wa}), coincides with the  Macdonald's function  $\rm {K}_{\nu }$  (see \cite[(6) and (7), pp. 78]{wa}) given by
\begin{equation*}
\rm{K}_{\nu }(x)=\frac{\pi }{2}\frac{I_{-\nu }(x)-I_{\nu }(x)}{\sin \nu
\pi }\quad x>0,
\end{equation*}
  with $I_{\upsilon }$ is the modified Bessel function over $(0,\infty )$ (see \cite[(2), pp. 77]{wa}). For integers values of $\nu$, $\rm {K}_{\nu }$ is defined by
\begin{equation*}
\rm{K}_{n}(x)=\underset{\nu \rightarrow n}{\lim }\:\rm{K}_{\nu
}(x)\quad x>0.
\end{equation*}
\end{remark}
\vskip.3in

\noindent Now, given $\lambda >0$,  we consider the function
\begin{equation*}
N_{\lambda }(x)=\lambda ^{\frac{\mu }{2}}x^{\frac{1}{2}}\mathcal{K}_{\mu }(\sqrt{\lambda }\:x), \quad x\in (0,\infty).
\end{equation*}
\vskip.1in

\noindent The following lemmas describe properties  of the kernel $N_{\lambda}$ which are crucial for the study of the no-negativity of Bessel operator (for proofs see Appendix).

\begin{lemma}\label{lemaNlambda}
Given  $\mu >-\frac{1}{2}$ and $\lambda >0$ then
\vskip.1in

a) $N_{\lambda }\in L^{1}(sr)=L^{1}(\frac{r^{-1}}{c_{\mu }})$ and
\begin{equation*}
\left\| N_{\lambda }\right\| _{L^{1}(sr)}=\frac{1}{\lambda }.
\end{equation*}

b) \begin{equation*}
\mathit{h}_{\mu }N_{\lambda }(y)=\frac{y^{\mu +\frac{1}{2}}}{\lambda +y^{2}}.
\end{equation*}

\end{lemma}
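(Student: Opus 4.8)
The plan is to reduce both parts to known integral identities for the Macdonald function $\mathcal{K}_\mu$ via the change of variable induced by the scaling $N_\lambda(x)=\lambda^{\mu/2}x^{1/2}\mathcal{K}_\mu(\sqrt{\lambda}\,x)$. For part (a), I would compute the weighted norm directly. Since $sr = x^{2\mu+1}c_\mu^{-1}\cdot x^{-\mu-1/2} = x^{\mu+1/2}c_\mu^{-1}$ and $N_\lambda > 0$ (because $\mathcal{K}_\mu > 0$ was established just before the lemma), we have
\begin{equation*}
\left\| N_\lambda \right\|_{L^1(sr)} = \frac{1}{c_\mu}\int_0^\infty \lambda^{\mu/2} x^{1/2}\mathcal{K}_\mu(\sqrt{\lambda}\,x)\, x^{\mu+1/2}\,dx = \frac{\lambda^{\mu/2}}{c_\mu}\int_0^\infty x^{\mu+1}\mathcal{K}_\mu(\sqrt{\lambda}\,x)\,dx.
\end{equation*}
Substituting $u=\sqrt{\lambda}\,x$ turns this into $\lambda^{-1}c_\mu^{-1}\int_0^\infty u^{\mu+1}\mathcal{K}_\mu(u)\,du$, and I would invoke the standard moment formula $\int_0^\infty u^{\mu+1}\mathcal{K}_\mu(u)\,du = 2^\mu\Gamma(\mu+1)=c_\mu$ (Watson, \cite{wa}) to collapse the constant to $1$, yielding $\left\| N_\lambda\right\|_{L^1(sr)}=1/\lambda$. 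The only care needed is checking integrability at both endpoints: near infinity $\mathcal{K}_\mu$ decays exponentially, and near zero the behavior $\mathcal{K}_\mu(u)\sim u^{-\mu}$ (for $\mu>0$) gives an integrand $\sim u$, while for $-\tfrac12<\mu\le 0$ the integrand is even milder, so the integral converges throughout the admissible range.

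For part (b), the plan is to recognize the integral defining $h_\mu N_\lambda(y)=\int_0^\infty \sqrt{xy}\,J_\mu(xy)N_\lambda(x)\,dx$ as (a rescaled form of) the Hankel transform of the Macdonald kernel. Writing out the definition and pulling the $\lambda$-powers out, I would reduce to the classical evaluation
\begin{equation*}
\int_0^\infty x^{\mu+1}J_\mu(xy)\,\mathcal{K}_\mu(\sqrt{\lambda}\,x)\,dx,
\end{equation*}
which is exactly the Hankel--Macdonald integral whose value is a constant multiple of $y^\mu(\lambda+y^2)^{-\mu-1}$ times the appropriate powers of $\lambda$ (again from \cite{wa}). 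Collecting the prefactors $\lambda^{\mu/2}$, the $\sqrt{y}$ from the kernel, and the constant from the integral formula should produce precisely $y^{\mu+\frac12}/(\lambda+y^2)$. This is consistent as a sanity check with Lemma~1(a): evaluating $h_\mu N_\lambda$ at the relevant normalization recovers the $L^1(sr)$ mass, and the simple pole structure in $\lambda+y^2$ matches the resolvent of $-S_\mu$ expected from Proposition~\ref{propHankelSmu}, since $h_\mu$ intertwines $S_\mu$ with multiplication by $-y^2$.

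The main obstacle I anticipate is not conceptual but bookkeeping: tracking the exact constants through the two Watson identities and the variable changes, and confirming that the claimed normalizations leave no residual factor. A secondary subtlety is that $\mathcal{K}_\mu$ is defined by the integral representation \eqref{rep integ 1} rather than directly as the Macdonald function, so to apply the tabulated integrals I must first invoke the preceding remark identifying $\mathcal{K}_\mu$ with $\mathrm{K}_\mu$ (for non-integer $\mu$, and by the limiting definition for integer $\mu$), and then argue by continuity in $\mu$ that both formulas extend across integer values. Once that identification is in place, both parts follow from the standard special-function integrals with only elementary manipulation.
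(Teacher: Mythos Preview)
Your approach for part (a) is correct and is a legitimate alternative to the paper's. The paper does not quote the Mellin moment $\int_0^\infty u^{\mu+1}\mathcal K_\mu(u)\,du=c_\mu$; instead it plugs the integral representation \eqref{rep integ 1} directly into the norm, swaps the order of integration, and evaluates the resulting Gaussian integral in $x$. Your route is shorter but requires the identification $\mathcal K_\mu=\mathrm K_\mu$ you flagged; the paper's route stays entirely within the definition \eqref{rep integ 1} and never needs that identification.

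For part (b) there is a concrete slip in your reduction. Writing out $h_\mu N_\lambda(y)=\int_0^\infty \sqrt{xy}\,J_\mu(xy)\,\lambda^{\mu/2}x^{1/2}\mathcal K_\mu(\sqrt\lambda\,x)\,dx$ gives the power $x^{1/2}\cdot x^{1/2}=x$, not $x^{\mu+1}$; the extra $x^{\mu}$ you are carrying comes from part (a), where it arose from the weight $sr$, which is absent here. Consequently the relevant tabulated integral is
\[
\int_0^\infty x\,J_\mu(bx)\,\mathrm K_\mu(ax)\,dx=\frac{b^{\mu}}{a^{\mu}(a^2+b^2)}\qquad(a,b>0,\ \mu>-1),
\]
whose denominator is $(a^2+b^2)^{1}$, not $(a^2+b^2)^{\mu+1}$; with $a=\sqrt\lambda$, $b=y$ and the prefactor $\lambda^{\mu/2}y^{1/2}$ this collapses exactly to $y^{\mu+\frac12}/(\lambda+y^2)$. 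So your strategy works once the exponent is corrected, but the formula you wrote would not produce the right pole structure. The paper again avoids citing a Hankel--Macdonald identity altogether: it inserts \eqref{rep integ 1}, reverses the order of integration, and uses only the elementary Gaussian--Bessel evaluation $\int_0^\infty s^{\mu+1}e^{-as^2/2}J_\mu(sr)\,ds=a^{-\mu-1}r^\mu e^{-r^2/(2a)}$, which it derives from \cite{ober}. That approach is more self-contained (and sidesteps the integer-$\mu$ continuity argument you anticipated), at the cost of one extra Fubini step.
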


\begin{lemma}\label{lemaHankelNlambda}
Let $1\leq p<\infty $. If  $f\in L^{p}(sr^{p})$  or  $L^{\infty }(r)$ then the following equality holds on $\mathcal{H}_{\mu }^{\prime }$
\begin{equation}
\mathit{h}_{\mu }( N_{\lambda }\sharp f) =\frac{1}{\lambda +y^{2}}\:\mathit{h}_{\mu }( f)   \label{hank conv nl}
\end{equation}
\end{lemma}

\vskip.3in
Finally, we can establish the main result of this section.
\begin{teo}\label{teonongLp}
Given $\mu >-\frac{1}{2}$, then

\begin{enumerate}
\item  The operators $ S_{\mu,p}$ and $S_{\mu,\infty}$ are closed .

\item  The operators $-S_{\mu,p}$ and  $-S_{\mu,\infty}$ are non-negative.
\end{enumerate}
\end{teo}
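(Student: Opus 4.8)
The plan is to exhibit, for every $\lambda>0$, the resolvent $(\lambda-S_{\mu})^{-1}$ explicitly as Hankel convolution with the kernel $N_{\lambda}$, and then to read off both closedness and non-negativity from the properties of that convolution. Concretely, I would introduce the operator $R_{\lambda}f=N_{\lambda}\sharp f$ on $L^{p}(sr^{p})$ (respectively on $L^{\infty}(r)$) and prove that it is precisely the resolvent of $S_{\mu,p}$ (respectively $S_{\mu,\infty}$) at the point $\lambda$.

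First I would establish boundedness of $R_{\lambda}$. Since $N_{\lambda}\in L^{1}(sr)$ with $\|N_{\lambda}\|_{L^{1}(sr)}=1/\lambda$ by Lemma \ref{lemaNlambda}(a), Young's inequality (Theorem \ref{teoyoung}) immediately gives that $R_{\lambda}$ maps $L^{p}(sr^{p})$ into itself (and $L^{\infty}(r)$ into itself) with
$$\|R_{\lambda}f\|\leq\|N_{\lambda}\|_{L^{1}(sr)}\,\|f\|=\tfrac{1}{\lambda}\|f\|.$$
Next comes the crucial algebraic identity, which I would obtain by passing to the Hankel transform. Applying $\mathit{h}_{\mu}$ and using Proposition \ref{propHankelSmu} one has $\mathit{h}_{\mu}\bigl((-S_{\mu}+\lambda)u\bigr)=(\lambda+y^{2})\,\mathit{h}_{\mu}(u)$, while Lemma \ref{lemaHankelNlambda} gives $\mathit{h}_{\mu}(R_{\lambda}f)=(\lambda+y^{2})^{-1}\mathit{h}_{\mu}(f)$. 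Composing these two identities makes the factors $(\lambda+y^{2})$ cancel, so that $\mathit{h}_{\mu}\bigl((-S_{\mu}+\lambda)R_{\lambda}f\bigr)=\mathit{h}_{\mu}(f)$; since $\mathit{h}_{\mu}$ is a bijection, hence injective, on $\mathcal{H}_{\mu}^{\prime}$ (Proposition \ref{operhmu}), I conclude $(\lambda-S_{\mu})R_{\lambda}f=f$ as elements of $\mathcal{H}_{\mu}^{\prime}$. The left-inverse identity is analogous: for $f\in D(S_{\mu,p})$, setting $g=(\lambda-S_{\mu})f$ and applying $\mathit{h}_{\mu}$ yields $\mathit{h}_{\mu}(R_{\lambda}g)=(\lambda+y^{2})^{-1}(\lambda+y^{2})\,\mathit{h}_{\mu}(f)=\mathit{h}_{\mu}(f)$, whence $R_{\lambda}(\lambda-S_{\mu})f=f$.

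With both identities in hand, $R_{\lambda}=(\lambda-S_{\mu,p})^{-1}$ is a bounded operator defined on all of $L^{p}(sr^{p})$; this places $\lambda$ in the resolvent set, so $S_{\mu,p}$, and hence $-S_{\mu,p}$, is closed, which proves (1). For (2), the bound $\|R_{\lambda}\|\leq 1/\lambda$ translates into $\|\lambda(\lambda-S_{\mu,p})^{-1}\|=\|\lambda R_{\lambda}\|\leq 1$ uniformly in $\lambda>0$, which is exactly the statement that $-S_{\mu,p}$ is non-negative. The argument for $S_{\mu,\infty}$ is identical, invoking instead the $L^{\infty}(r)$ case of Young's inequality and of Lemma \ref{lemaHankelNlambda}.

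The delicate point is bookkeeping within the distributional framework rather than any hard estimate. One must observe that $f\in L^{p}(sr^{p})$ is a regular element of $\mathcal{H}_{\mu}^{\prime}$ so that the transform identities apply, check that the identity $(\lambda-S_{\mu})R_{\lambda}f=f$ obtained in $\mathcal{H}_{\mu}^{\prime}$ descends to an equality of functions in $L^{p}(sr^{p})$ (which follows from the injective embedding of regular elements noted in Remark \ref{regdistrib}), and verify that $R_{\lambda}f$ genuinely lies in $D(S_{\mu,p})$, i.e.\ that $S_{\mu}(R_{\lambda}f)=\lambda R_{\lambda}f-f\in L^{p}(sr^{p})$. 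The real analytic content, namely the $L^{1}(sr)$ bound on $N_{\lambda}$ and the convolution identity for its Hankel transform, has already been isolated in Lemmas \ref{lemaNlambda} and \ref{lemaHankelNlambda}, so the theorem reduces to these formal manipulations and the cancellation at the level of the transform.
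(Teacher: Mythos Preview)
Your proposal is correct and follows essentially the same route as the paper: both identify $(\lambda-S_{\mu})^{-1}$ with $N_{\lambda}\sharp\,\cdot$ via the Hankel-transform identities of Lemma~\ref{lemaHankelNlambda} and Proposition~\ref{propHankelSmu}, and then read off the $1/\lambda$ resolvent bound from Lemma~\ref{lemaNlambda}(a) together with Theorem~\ref{teoyoung}. The only cosmetic difference is that the paper proves closedness in~(1) by a direct sequential argument in $\mathcal{D}'(0,\infty)$ and establishes injectivity of $\lambda-S_{\mu}$ separately before surjectivity, whereas you obtain closedness as a consequence of having a bounded two-sided inverse $R_{\lambda}$; this is a harmless reorganization, since $\lambda-S_{\mu,p}=R_{\lambda}^{-1}$ is automatically closed.
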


\begin{proof}
(1) Let $\{f_{n}\}_{n=1}^{\infty}\subset D\left( S_{\mu,\infty}\right)$ such that
$$\lim_{n\rightarrow \infty}f_{n}=f\quad \quad {\rm and} \quad\quad \lim_{n\rightarrow
\infty}S_{\mu,\infty} f_{n}=g$$

\noindent in $L^{\infty}(r)$. Since convergence in $L^{\infty}(r)$
implies convergence in $\mathcal{D'}(0,\infty)$, then given $\phi
\in\mathcal{D}(0,\infty)$
$$(S_{\mu}f,\phi)=(f,S_{\mu}\phi)=\lim_{n\rightarrow
\infty}(f_{n},S_{\mu}\phi)=\lim_{n\rightarrow
\infty}(S_{\mu}f_{n},\phi)=(g,\phi),$$

\noindent so, $S_{\mu}f=g$ and $ S_{\mu,\infty}$
is closed. The case of $S_{\mu,p}$ is similar. \\

   \vskip.1in

 (2) Let $\lambda >0$ and $f\in D\left( S_{\mu,\infty}\right)$  such that  $\bigl(\lambda
-S_{\mu,\infty}\bigr)\:f=0$. Then $\left( \lambda -S_{\mu,\infty}\right)f\in L^{\infty }(r)$ and  is
null as regular element of  $\mathcal{H}_{\mu }^{\prime }$, so
\begin{equation*}
\mathit{h}_{\mu }\bigl(\lambda - S_{\mu,\infty}f\bigr)=0
\end{equation*}
in $\mathcal{H}_{\mu }^{\prime }$.  By Proposition
\ref{propHankelSmu}, we obtain that
\begin{equation*}
(\lambda +y^{2})\mathit{h}_{\mu }f=0
\end{equation*}
in $\mathcal{H}_{\mu }^{\prime }$, and hence by Proposition
\ref{operhmu}

\begin{equation*}
\mathit{h}_{\mu }f=(\lambda +y^{2})^{-1}(\lambda
+y^{2})\mathit{h}_{\mu }f=0.
\end{equation*}
Then,  $f=0$ as element of $\mathcal{H}_{\mu }^{\prime }$ and by
Remark \ref{regdistrib}   we conclude that $f=0$ a.e in
$(0,\infty)$ and, $\lambda - S_{\mu,\infty}$ is
inyective. Now, let  $f\in L^{\infty }(r)$ and $g=N_{\lambda }\sharp
\:\mathit{f}$. Then, by Theorem \ref{teoyoung},  $g\in L^{\infty
}(r)$ and
\begin{equation*}
\mathit{h}_{\mu }\bigl(\left(\lambda - S_{\mu,\infty}\right) g\bigr)=(\lambda +y^{2})\:\mathit{h}_{\mu }g=(\lambda
+y^{2})\:\mathit{h}_{\mu }( N_{\lambda }\sharp \:f) =\mathit{h}_{\mu}f.
\end{equation*}
By injectivity of Hankel transform in $\mathcal{H}_{\mu
}^{\prime }$ we obtain that
\begin{equation*}
\left(\lambda - S_{\mu,\infty}\right) g=f,
\end{equation*}
so, $\lambda - S_{\mu,\infty }$ is onto. Also,
\begin{equation*}
\left\| \left(\lambda - S_{\mu,\infty}\right)^{-1}f\right\| _{L^{\infty }(r)}=\left\| g\right\| _{L^{\infty }(r)}=\left\|
N_{\lambda }\sharp \mathit{f}\right\| _{L^{\infty }(r)}\leq \left\|
N_{\lambda }\right\| _{L^{1}(rs)}\left\| \mathit{f}\right\| _{L^{\infty
}(r)}=\frac{1}{\lambda }\left\| \mathit{f}\right\| _{L^{\infty }(r)}
\end{equation*}
hence  $- S_{\mu,\infty }$ is non-negative. The proof of non-negativity of $- S_{\mu,p}$ is similar.
\end{proof}
\begin{remark}: In \cite{motri}, the result of  theorem above has been obtained  for the
particular case $p=2$ and in $\mathbb{R}^{n}_{+}$.
\end{remark}

Now, in view of non-negativity of  $- S_{\mu,\infty }$ and $- S_{\mu,p}$ we can consider the complex fractional powers.
 If $\alpha\in\mathbb{C}$,  $\textrm{Re}\:\alpha> 0$ and $n>\textrm{Re}\:\alpha$ then the fractional power of $- S_{\mu,\infty }$ can be represented by:
$$(- S_{\mu,\infty })^{\alpha}=(- S_{\mu,\infty }+1)^{n}\mathcal{J}_{\infty }^{\alpha}(- S_{\mu,\infty }+1)^{-n},$$
(see \cite[(5.20), pp. 114]{ms}), where with $\mathcal{J}_{\infty}^{\alpha}$ we denote the  Balakrishnan operator associated to $- S_{\mu,\infty}$ given by:
$$\mathcal{J}_{\infty}^{\alpha}\phi=\frac{\Gamma(n)}{\Gamma(\alpha)\Gamma(n-\alpha)}\int_{0}^{\infty}\lambda^{\alpha-1}\Bigl[- S_{\mu,\infty}(\lambda- S_{\mu,\infty})^{-1}\Bigr]^{n}\phi\:d\lambda,$$
for $\alpha$ and $n$ in the previous conditions and $\phi\in D((- S_{\mu,\infty})^{n})$, (see \cite[(3.4), pp. 59]{ms}). The case of  $(- S_{\mu,p})^{\alpha}$ is analogous.

\section{Nonnegativity of Bessel operator $S_{\mu }$ in the space $\mathcal{B}$ }
In order to study non-negativity of Bessel operator in a locally convex space, we begin with the following observation:
 \begin{remark} The continuous operator
 $-S_{\mu}:\mathcal{H}_{\mu}\to\mathcal{H}_{\mu}$ is not  non-negative.
 \end{remark}
 \vskip.1in

Indeed, if we suppose that $-S_{\mu}$ is non-negative in $\mathcal{H}_{\mu}$, by the continuity of $-S_{\mu}$ in $\mathcal{H}_{\mu}$,
  given $\alpha\in \mathbb{C}$, $0<\alpha< 1$ and according to \cite[Chapter 5, pp. 105 and 134]{ms}),  we have that fractional power $(-S_{\mu})^{\alpha}$ would be given by
\begin{equation}\label{e6}
(-S_{\mu})^{\alpha}\phi=\frac{\sin
\alpha\pi}{\pi}\int_{0}^{\infty}\:
 \lambda^{\alpha-1}(-S_{\mu})(\lambda-S_{\mu})^{-1}\phi\:
 d\lambda
 \end{equation}
 \noindent  and $D((-S_{\mu})^{\alpha})=D(-S_{\mu})=\mathcal{H}_{\mu}$. Applying the Hankel transform in
 (\ref{e6})  we obtain
$$
{\it h}_{\mu}\Bigl((-S_{\mu})^{\alpha}\phi\Bigr)(y)=\frac{\sin
\alpha\pi}{\pi}\int_{0}^{\infty}\:
 \lambda^{\alpha-1}y^{2}(\lambda+y^{2})^{-1}{\it
h}_{\mu}\phi(y)\:
 d\lambda.$$
 $$=(y^{2})^{\alpha}{\it h}_{\mu}\phi(y),$$

\noindent  where we have used  equality (3) of Proposition \ref{prophankelsmumas1} and \cite[Remark 3.1.1]{ms}).
In this case it would mean that  $(y^{2})^{\alpha}{\it h}_{\mu}\phi(y)\in\mathcal{H}_{\mu}$ which is false in general (just consider $\phi(y)=y^{\mu +\frac{1}{2}}e^{-y^{2}}$ and $\alpha=\frac{1}{4}$).

\vskip.3in

\noindent Now, we consider  the Banach space
$Y=L^{1}(sr)\cap L^{\infty }(r)$ with the norm
\begin{equation*}
\left\| f\right\| _{Y}=\max \left( \left\| f\right\|
_{L^{1}(sr)},\left\| f\right\| _{L^{\infty }(r)}\right) ,
\end{equation*}
and the part of the Bessel operator in $Y$,
$(S_{\mu})_{Y}$, with domain
$$D\bigl[(S_{\mu})_{Y}\bigr]=\{f\in Y:S_{\mu}f\in Y\}.$$

\noindent From Theorem \ref{teonongLp} it is evident that $-(S_{\mu})_{Y}$
is closed and nonnegative. We have the following proposition:

\begin{proposition}\label{suavdomSmuy} $D\bigl[(S_{\mu})_{Y}\bigr]\subset C_{0}(0,\infty)$.
\end{proposition}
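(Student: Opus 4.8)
The plan is to represent every element of the domain through the resolvent kernel $N_\lambda$ and then read off its regularity. Fix $\lambda>0$ and take $f\in D\bigl[(S_\mu)_Y\bigr]$, so that $f\in Y$ and $S_\mu f\in Y$. Setting $g:=(\lambda-S_\mu)f$, we have $g\in Y=L^{1}(sr)\cap L^{\infty}(r)$. Since $-(S_\mu)_Y$ is closed and non-negative, as already noted from Theorem \ref{teonongLp}, the resolvent $(\lambda-(S_\mu)_Y)^{-1}$ exists and, exactly as in the proof of Theorem \ref{teonongLp}, is realized as Hankel convolution with $N_\lambda$; hence
$$f=\bigl(\lambda-(S_\mu)_Y\bigr)^{-1}g=N_\lambda\sharp g.$$
This identity is the object I would analyse.

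First I would settle continuity on the open half-line by regularity for the underlying ordinary differential operator. By (\ref{besselOp2}), the distributional identity $S_\mu f=f''-\tfrac{4\mu^2-1}{4x^2}f$ holds in $\mathcal D'(0,\infty)$, because the action of $S_\mu$ on $\mathcal H_\mu'$ restricts to the classical one on $\mathcal D(0,\infty)$ (cf. Remark \ref{regdistrib}). Therefore
$$f''=S_\mu f+\frac{4\mu^2-1}{4x^2}\,f,$$
and the right-hand side lies in $L^{1}_{loc}(0,\infty)$, since $f,S_\mu f\in Y\subset L^{1}_{loc}(0,\infty)$ and the coefficient $\tfrac{4\mu^2-1}{4x^2}$ is bounded on every compact subset of $(0,\infty)$. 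A distribution whose second derivative is locally integrable is of class $C^{1}$, so $f\in C^{1}(0,\infty)$; in particular $f$ is continuous. The behaviour at the origin is then immediate: $f\in L^{\infty}(r)$ gives the pointwise bound $|f(x)|\le\|f\|_{L^{\infty}(r)}\,x^{\mu+\frac12}$, which tends to $0$ as $x\to 0^{+}$ because $\mu+\tfrac12>0$; combined with continuity, $f$ extends to the origin with value $0$.

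The remaining and decisive point is the decay as $x\to\infty$, where I would exploit the exponential decay of the kernel. Writing $N_\lambda(x)=\lambda^{\mu/2}x^{1/2}\mathcal K_\mu(\sqrt\lambda\,x)$ and recalling that $\mathcal K_\mu$ is Macdonald's function, $N_\lambda$ decays exponentially as $x\to\infty$ and $N_\lambda\in L^{1}(sr)$ with $\|N_\lambda\|_{L^{1}(sr)}=1/\lambda$ (Lemma \ref{lemaNlambda}). Fixing $R>0$ I would split $g=g\mathbf 1_{(0,R]}+g\mathbf 1_{(R,\infty)}$ and estimate the two pieces of $f=N_\lambda\sharp(g\mathbf 1_{(0,R]})+N_\lambda\sharp(g\mathbf 1_{(R,\infty)})$ by Theorem \ref{teoyoung}. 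For the first summand the support condition $|x-y|<z<x+y$ built into $D_\mu$ forces $y\ge x-R$ when $z\le R$, so for large $x$ only exponentially small values of $N_\lambda$ contribute and this term tends to $0$; for the tail one uses the $L^{1}(sr)$-smallness of $g\mathbf 1_{(R,\infty)}$, letting $R\to\infty$ and then $x\to\infty$. I expect this last balancing to be the main obstacle, since the $L^{\infty}(r)$ estimate furnished by Theorem \ref{teoyoung} does not itself see any decay and the pointwise bounds inherited from $L^{\infty}(r)$ grow like $x^{\mu+\frac12}$, so converting the $L^{1}(sr)$-smallness of the tail together with the exponential decay of $N_\lambda$ into genuine uniform pointwise decay requires care. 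An alternative I would keep in reserve is to bound $f'$ on $[1,\infty)$ through the same representation, deduce uniform continuity of $f$ there, and then conclude $f(x)\to 0$ from $f\in L^{1}(sr)$, since otherwise $|f|$ would stay bounded below on intervals of fixed length on which the weight $sr$ is unbounded, contradicting integrability.
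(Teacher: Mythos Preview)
Your argument for continuity on $(0,\infty)$ via the distributional identity $f''=S_\mu f+\tfrac{4\mu^2-1}{4x^2}f$ is fine, and the decay at the origin from the $L^\infty(r)$ bound is correct. The gap is exactly where you flag it: the decay of $f(x)$ as $x\to\infty$ is not established. In the splitting you sketch, the compactly supported piece is handled by the triangle constraint in $D_\mu$ together with the exponential decay of $N_\lambda$, but the tail piece is the problem. Young's inequality (Theorem~\ref{teoyoung}) only yields an $L^\infty(r)$ bound, which for large $x$ says nothing better than $|f(x)|\lesssim x^{\mu+1/2}$; swapping the roles to use the $L^1(sr)$-smallness of $g\mathbf 1_{(R,\infty)}$ would require $N_\lambda\in L^\infty(r)$, and $x^{-\mu-1/2}N_\lambda(x)=\lambda^{\mu/2}x^{-\mu}\mathcal K_\mu(\sqrt\lambda x)$ blows up at the origin when $\mu>0$. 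Your fallback via uniform continuity also stalls: from $|f|,|f''|\lesssim x^{\mu+1/2}$ on $[1,\infty)$ a localized Landau inequality gives at best $|f'(x)|\lesssim x^{\mu+1/2}$, which does not yield uniform continuity on the whole half-line.

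The paper bypasses all of this kernel analysis by working on the Hankel side. From $Y\subset L^1(0,\infty)\cap L^2(0,\infty)$ one has $h_\mu f,\,h_\mu(S_\mu f)\in L^\infty(0,\infty)$, and since $h_\mu(S_\mu f)=-y^2 h_\mu f$ this gives $(1+y^2)|h_\mu f|\le M$, hence $h_\mu f\in L^1(0,\infty)$. Then the Hankel inversion $f=h_\mu(h_\mu f)$ together with the Riemann--Lebesgue type statement $h_\mu(L^1(0,\infty))\subset C_0(0,\infty)$ (Proposition~\ref{ImHankL1}) finishes the proof in one stroke. This route turns the decay question into an integrability question for $h_\mu f$, which is immediate from the spectral identity, and avoids the pointwise balancing that your resolvent approach would need.
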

\begin{proof}  By  (\ref{inclusHm}) and (\ref{eqincL1}),  $L^{1}(sr)\cap L^{\infty}(r)\subset L^{1}(0,\infty)\bigcap
L^{2}(0,\infty)$, then for
$f\in D\bigl[(S_{\mu})_{Y}\bigr]$,  $f$ and $S_{\mu}f$ are in
$L^{1}(0,\infty)$. By Remark \ref{obsHankelL1} (see Apendix) then $\mathit{h}_{\mu}f-\mathit{h}_{\mu}S_{\mu}f$ are in
$L^{\infty}(0,\infty)$. By (1) of Proposition \ref{propHankelSmu} we have that
$$(1+y^{2})|\mathit{h}_{\mu}f|\leq M,$$
so, $\mathit{h}_{\mu}f\in L^{1}(0,\infty)$.

We have thus proved that for $f\in D\bigl[(S_{\mu})_{Y}\bigr]$ then
$f$ and $\mathit{h}_{\mu}(f)$ are in $L^{1}(0,\infty)\bigcap
L^{2}(0,\infty)$. Then, by Remark \ref{inverHankelHmu} (see Appendix), we obtain that
$$\mathit{h}_{\mu}(\mathit{h}_{\mu}(f))(x)=f(x) , \quad  \mbox
{a.e}\quad x\in (0,\infty).$$

Since $\mathit{h}_{\mu}(f)\in L^{1}(0,\infty)$ then by Proposition
\ref{ImHankL1}, $f=g$ a.e in $(0,\infty)$ with $g\in
C_{0}(0,\infty)$.
\end{proof}

\vskip.3in

Now, we consider the following space:
$$\mathcal{B}=\{f\in Y: (S_{\mu})^{k}f\in Y \quad\mbox {for}\quad k=0,1,2,\cdots\}=
\bigcap_{k=0}^{\infty}D\bigl[((S_{\mu})_{Y})^{k}\bigr],$$

\noindent with the seminorms
$$\rho_{m}(f)=\max_{0\leq k\leq m}(\| (S_{\mu})^{k}f \|_{Y}), \quad m=0,1,2,\cdots$$

\begin{remark}\label{B debil H} From Proposition \ref{suavdomSmuy} it is evident that
$\mathcal{B}\subset C^{\infty}(0,\infty)\bigcap C_{0}(0,\infty)$. Moreover, is clear from
(\ref{inclusHm}) that $\mathcal{B}\subset L^{p}(sr^{p})$ for all
$1\leq p<\infty$, and considering (1) of proposition \ref{notas smu}
we have that $\mathcal{H}_{\mu }\subset \mathcal{B}$ and the
topology of $\mathcal{H}_{\mu}$ induced by $\mathcal{B}$ is weaker
than the usual topology given in section 2.  Indeed,
from (\ref{acotainf1}) and (\ref{acota L11}) we have that
\begin{equation}\label{eqacotasemi}
\|\phi\|_{Y}\leq C \left[ \gamma _{0,0}^{\mu }(\phi )+\gamma
_{k,0}^{\mu }(\phi )\right] ,\quad \phi \in \mathcal{H}_{\mu}
\end{equation}
for $k> 2\mu+2$, and by continuity of $S_{\mu}$ in $\mathcal{H}_{\mu}$, we deduce
that given a seminorm $\rho_{m}$ there exists a finite set of
seminorms $\{\gamma _{m_{i},k_{i}}^{\mu}\}_{i=1}^{r} $ and constants
$c_{1},\dots, c_{r}$ such that
$$\rho_{m}(\phi)\leq \sum c_{i}\gamma _{m_{i},k_{i}}^{\mu}(\phi),
\quad \phi \in \mathcal{H}_{\mu}.$$
Moreover, $\mathcal{H}_{\mu }$ is dense in $\mathcal{B}$ because $\mathcal{D}(0,\infty)$ it is.
\end{remark}
\begin{proposition}\label{propo nonormable} $\mathcal{B}$ is not normable.
\end{proposition}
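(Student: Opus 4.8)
The plan is to show that $\mathcal{B}$ fails to be normable by exhibiting an essential use of infinitely many of the seminorms $\rho_m$, equivalently by showing that no single $\rho_m$ (nor any finite combination, which by the directedness of the family is comparable to a single $\rho_m$) dominates all the others. By a standard criterion (a locally convex space is normable if and only if it admits a bounded neighborhood of the origin, equivalently if and only if its topology is generated by a single seminorm up to equivalence), it suffices to prove that for every fixed $m$ there is no constant $C$ with $\rho_{m+1}(f) \le C\,\rho_m(f)$ for all $f \in \mathcal{B}$. Thus the concrete goal is: for each $m$, produce a sequence $\{f_j\} \subset \mathcal{B}$ with $\rho_m(f_j)$ bounded but $\rho_{m+1}(f_j) \to \infty$.

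\medskip

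\noindent\textbf{First} I would move the problem to the Hankel transform side, where $S_\mu$ acts by multiplication. Since $\mathcal{H}_\mu \subset \mathcal{B}$ densely (Remark~\ref{B debil H}) and $h_\mu$ is an automorphism of $\mathcal{H}_\mu$, it is natural to build test functions from $\mathcal{H}_\mu$. Using item (1) of Proposition~\ref{propHankelSmu}, applying $(S_\mu)^k$ corresponds under $h_\mu$ to multiplication by $(-y^2)^k$, so that $\rho_m(f)$ is controlled by the $Y$-norms of $h_\mu^{-1}$ of the functions $y^{2k}\,h_\mu f$ for $0 \le k \le m$. The idea is to choose $\phi_j = h_\mu^{-1}(g_j)$ where the profiles $g_j$ concentrate their mass at larger and larger values of $y$: multiplication by $y^{2(m+1)}$ then amplifies $g_j$ far more than multiplication by $y^{2k}$ for $k \le m$, forcing $\rho_{m+1}/\rho_m \to \infty$.

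\medskip

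\noindent\textbf{The main obstacle} will be controlling the $Y = L^1(sr)\cap L^\infty(r)$ norms rather than the more convenient $L^2$-type norms on the transform side; the estimates in Proposition~\ref{espacio hm} (relations~(\ref{acotainf1}) and~(\ref{acota L11})) and the mapping properties of $h_\mu$ must be combined so that concentrating $g_j$ at high frequency genuinely separates the growth rates of the successive seminorms in the $Y$-norm. Concretely, I would take $g_j$ to be a fixed bump translated/rescaled to sit near $y = a_j$ with $a_j \to \infty$, compute (or bound above and below) $\|h_\mu^{-1}(y^{2k} g_j)\|_Y$ in terms of $a_j^{2k}$ times quantities uniform in $j$, and check that the ratio of the $k=m+1$ contribution to the $k=m$ contribution behaves like $a_j^2 \to \infty$ while $\rho_m(\phi_j)$ stays comparable to $a_j^{2m}$ — then normalizing by $a_j^{-2m}$ keeps $\rho_m$ bounded while sending $\rho_{m+1} \to \infty$.

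\medskip

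\noindent Once such sequences are produced for every $m$, the conclusion is immediate: a normable topology would be equivalent to a single seminorm, hence to some $\rho_m$ (the family being increasing), and this $\rho_m$ would have to dominate $\rho_{m+1}$, contradicting the construction. Therefore $\mathcal{B}$ is not normable.
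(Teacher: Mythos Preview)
Your overall strategy is sound: showing that no $\rho_m$ dominates $\rho_{m+1}$ is exactly the right reduction, and the heuristic that higher powers of $S_\mu$ ``see'' higher frequencies more strongly is the correct intuition. However, your route through the Hankel side is more laborious than necessary, and the ``main obstacle'' you flag---controlling the $Y$-norms of $h_\mu^{-1}(y^{2k}g_j)$---is a genuine technical cost that the paper avoids entirely. The paper works directly in physical space with a dilation: it uses the elementary identity $(S_\mu)^k\bigl[\phi(\cdot\,/s)\bigr](x)=s^{-2k}\bigl[(S_\mu)^k\phi\bigr](x/s)$ together with the explicit scaling $\|\,\phi(\cdot\,/s)\,\|_{L^\infty(r)}=s^{-\mu-\frac12}\|\phi\|_{L^\infty(r)}$. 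Assuming $\rho_{n+1}\le t_n\rho_n$ and choosing $\phi$ with $(S_\mu)^{n+1}\phi\not\equiv 0$, one compares $\|(S_\mu)^{n+1}\psi\|_{L^\infty(r)}$ with $\rho_n(\psi)$ for $\psi=\phi(\cdot\,/s)$ and lets $s\to\infty$; an extra factor $s^{-2}$ survives and forces a contradiction. Note that under the Hankel dilation law $h_\mu\bigl[\phi(\cdot\,/s)\bigr](y)=s\,(h_\mu\phi)(sy)$, the paper's dilated $\psi$ is precisely your ``bump rescaled to sit near $y\approx s$'', so the two constructions are really the same---but staying on the physical side makes the $Y$-norm bookkeeping trivial rather than an obstacle.
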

\begin{proof} Suppose that $\mathcal{B}$ is locally bounded, then there exists an integer positive $n$ such that the set
$$V_{n}=\Bigl\{\phi\in\mathcal{B} : \rho_{n}(\phi)< \frac{1}{n}\Bigr\},$$
are bounded. Then, there exists a $t_{n}> 0$ such that
\begin{equation}\label{eqinclVn}
V_{n}\subset t_{n}V_{n+1}.
\end{equation}
Let $\phi\in\mathcal{B}$ and $\varphi=\bigl((n+1)\rho_{n}(\phi)\bigr)^{-1}\phi$. Then  $\varphi\in V_{n}$ and by (\ref{eqinclVn}) $(t_{n})^{-1}\varphi\in V_{n+1}$, and hence $\rho_{n+1}((t_{n})^{-1}\varphi)< \frac{1}{n+1}$, so
\begin{equation} \label{desigrho}
\rho_{n+1}(\phi)\leq t_{n}\rho_{n}(\phi)
\end{equation}
Given a constant $l>0$  and $f,g\in C^{2k}(0,\infty)$ related by $f(x)=g(lx)$, we have that

\begin{equation}\label{eqSmuiterated}
(S_{\mu})^{k}f(x)=(l^{2})^{k}((S_{\mu})^{k}g)(lx).
\end{equation}
Now, let $\phi\in\mathcal{B}$ such that $(S_{\mu})^{n+1}\phi$ non-identically vanishing function and a constant  $s>1$. Then $\psi(x)=\phi(s^{-1}x)$ remains in $\mathcal{B}$ and verified that $(S_{\mu})^{n+1}\psi$ is a non-identically vanishing function and  $\phi(x)=\psi(sx)$. Then,
$$
\|(S_{\mu})^{n+1}\psi\|_{L^{\infty}(r)}=s^{-\mu-\frac{1}{2}}s^{-2(n+1)}\|(S_{\mu})^{n+1}\phi\|_{L^{\infty}(r)}\leq$$
 \begin{equation}\label{eqnonorm}
 s^{-\mu-\frac{1}{2}}s^{-2(n+1)} t_{n}\rho_{n}(\phi)\leq s^{-\mu-\frac{1}{2}}s^{-2(n+1)}s^{\mu+\frac{1}{2}} s^{2n} t_{n}\rho_{n}(\psi)=s^{-2}t_{n}\rho_{n}(\psi).
\end{equation}
Since (\ref{eqnonorm}) is verified for all $s>1$, taking $s\rightarrow \infty$,  we conclude that $\|(S_{\mu})^{n+1}\psi\|_{L^{\infty}(r)}$=0 which contradicts the assumption about $\psi$. Then the proposition follows.
\end{proof}

We denote with $(S_{\mu})_{\mathcal{B}}$ the part of Bessel operator $S_{\mu}$ in $\mathcal{B}$. By definition of $\mathcal{B}$, it is evident that the domain of  $(S_{\mu})_{\mathcal{B}}$ is $\mathcal{B}$ and is verified the following result
\begin{teo}\label{cont smu in B} $\mathcal{B}$ is a Fr\'echet space and $-(S_{\mu})_{\mathcal{B}}$ is continuous and nonnegative operator
on $\mathcal{B}$.
\end{teo}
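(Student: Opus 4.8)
The plan is to establish the three assertions in turn, the first two being routine and the third carrying the real content. For the Fréchet property, observe that $\mathcal{B}$ carries the countable, increasing (hence directed) family $\{\rho_m\}_{m\ge 0}$, so it is metrizable, and it is Hausdorff because $\rho_0=\|\cdot\|_Y$ is a norm. It remains to prove completeness. Given a Cauchy sequence $\{f_n\}$ in $\mathcal{B}$, for each fixed $k$ the sequence $\{(S_{\mu})^{k}f_n\}$ is Cauchy in the Banach space $Y$ and hence converges to some $g_k\in Y$; in particular $f_n\to g_0=:f$ in $Y$. Using that $-(S_{\mu})_{Y}$ is closed (from Theorem \ref{teonongLp}) together with the convergences $(S_{\mu})^{k-1}f_n\to g_{k-1}$ and $S_{\mu}\bigl((S_{\mu})^{k-1}f_n\bigr)=(S_{\mu})^{k}f_n\to g_k$, an induction on $k$ yields $g_{k-1}\in D[(S_{\mu})_{Y}]$ and $S_{\mu}g_{k-1}=g_k$, so $(S_{\mu})^{k}f=g_k$ for every $k$. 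Thus $f\in\mathcal{B}$ and $\rho_m(f_n-f)\to 0$, which proves completeness and hence that $\mathcal{B}$ is Fréchet.

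For continuity of $-(S_{\mu})_{\mathcal{B}}$, note first that $S_{\mu}$ maps $\mathcal{B}$ into itself, since $(S_{\mu})^{k}(S_{\mu}f)=(S_{\mu})^{k+1}f\in Y$ for all $k$ whenever $f\in\mathcal{B}$. The estimate
\[
\rho_m(S_{\mu}f)=\max_{0\le k\le m}\|(S_{\mu})^{k+1}f\|_Y=\max_{1\le j\le m+1}\|(S_{\mu})^{j}f\|_Y\le \rho_{m+1}(f)
\]
holds for every $m$ and every $f\in\mathcal{B}$, which gives continuity directly (with constant $1$) by the seminorm characterization of continuous linear maps between Fréchet spaces.

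The heart of the matter is nonnegativity. From Theorem \ref{teonongLp} the operator $-(S_{\mu})_{Y}$ is nonnegative on $Y$; concretely, for $\lambda>0$ its resolvent is $R_{\lambda}f:=N_{\lambda}\sharp f=(\lambda-S_{\mu})^{-1}f$, and Lemma \ref{lemaNlambda}(a) together with Theorem \ref{teoyoung} give $\|\lambda R_{\lambda}\|_{Y\to Y}\le 1$. I would first record that $R_{\lambda}$ commutes with $S_{\mu}$ on the relevant domains: for $f\in D[(S_{\mu})_{Y}]$ one has $S_{\mu}R_{\lambda}f=\lambda R_{\lambda}f-f=R_{\lambda}S_{\mu}f$. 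Iterating this identity along the nested domains defining $\mathcal{B}$, an induction on $k$ shows that for $f\in\mathcal{B}$ the element $R_{\lambda}f$ lies in $D[((S_{\mu})_{Y})^{k}]$ with
\[
(S_{\mu})^{k}R_{\lambda}f=R_{\lambda}(S_{\mu})^{k}f\in Y,\qquad k=0,1,2,\dots,
\]
so $R_{\lambda}$ maps $\mathcal{B}$ into $\mathcal{B}$. Combined with the injectivity of $\lambda-S_{\mu}$ on $Y$, this shows that $\lambda-(S_{\mu})_{\mathcal{B}}$ is a bijection of $\mathcal{B}$ whose inverse is the restriction $R_{\lambda}|_{\mathcal{B}}$, i.e. $(-\infty,0)\subset\rho\bigl(-(S_{\mu})_{\mathcal{B}}\bigr)$. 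Finally the commutation identity and the bound on $Y$ yield
\[
\rho_m(\lambda R_{\lambda}f)=\max_{0\le k\le m}\|\lambda R_{\lambda}(S_{\mu})^{k}f\|_Y\le\max_{0\le k\le m}\|(S_{\mu})^{k}f\|_Y=\rho_m(f)
\]
for all $\lambda>0$, all $m$ and all $f\in\mathcal{B}$. This equicontinuity of $\{\lambda R_{\lambda}\}_{\lambda>0}$, together with the resolvent statement above, is exactly the definition of nonnegativity of $-(S_{\mu})_{\mathcal{B}}$ in the locally convex framework of \cite{ms}.

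The routine parts are the Fréchet property and the continuity; the main obstacle is the nonnegativity, and within it the verification that $R_{\lambda}$ leaves $\mathcal{B}$ invariant. This is precisely where the commutation identity $(S_{\mu})^{k}R_{\lambda}=R_{\lambda}(S_{\mu})^{k}$ on the nested domains is indispensable, because the seminorms $\rho_m$ involve arbitrarily high powers of $S_{\mu}$; the single uniform bound $\|\lambda R_{\lambda}\|_{Y\to Y}\le 1$ then upgrades, through that identity, to equicontinuity in every $\rho_m$ simultaneously, which is what the definition in \cite{ms} demands.
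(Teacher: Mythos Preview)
Your proof is correct. The paper's own proof consists of a single sentence deferring to Proposition~1.4.2 of \cite{ms}, which is precisely the abstract result that, for a closed nonnegative operator $A$ on a Banach space $Y$, the space $\bigcap_{k\ge 0}D(A^k)$ equipped with the seminorms $\max_{0\le k\le m}\|A^k\cdot\|_Y$ is Fr\'echet and the part of $A$ there is continuous and nonnegative; you have simply unpacked that proposition in this concrete instance, so the two approaches are the same in substance.
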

\begin{proof} The proof is immediate by Proposition 1.4.2  given in \cite{ms}.
\end{proof}

\section{Nonnegativity of Bessel operator $S_{\mu }$ in the distributional space  $\mathcal{B'}$ }
In this section we study the non-negativity of Bessel operator in the topological dual space of $\mathcal{B}$ with the strong topology, i.e the space $\mathcal{B'}$ with the seminorms $\{|.|_{B}\}$, where the sets  $B$ are in the family of bounded sets in $\mathcal{B}$,  and are given by

$$|T|_{B}=\sup_{\phi\in B}|(T,\phi)|, \quad  T\in \mathcal{B'}.$$

\begin{remark} As in \cite[Remark 3.4, pp.263]{stu}, $\mathcal{B'}$ is sequentially complete because $\mathcal{B}$ is non-normable. Moreover, and for $1\leq p\leq \infty$ then $L^{p}(sr^{p})\subset \mathcal{B'}$. To prove this, we observe that given $f\in L^{p}(sr^{p})$ and $\phi\in \mathcal{B}$ and $q$ the conjugate of $p$ then
\begin{equation}\label{eqacotaf}
\Bigl|\int_{0}^{\infty}f\phi\Bigr|=\Bigl|\int_{0}^{\infty}f\phi s^{-1}r^{-p}sr^{p}\Bigr|\leq \|f\|_{L^{p}(sr^{p})}\|\phi s^{-1}r^{-p}\|_{L^{q}(sr^{p})},
\end{equation}

\noindent and
$$
\|\phi s^{-1}r^{-p}\|_{L^{q}(sr^{p})}=\Bigl\{\int_{0}^{\infty}|\phi s^{-1}r^{-p}|^{q}sr^{p}\Bigr\}^{\frac{1}{q}}=
\Bigl\{\int_{0}^{\infty}|\phi|^{q}(c_{\mu}r^{2}r^{-p})^{q}sr^{p}\Bigr\}^{\frac{1}{q}}=$$
\begin{equation}\label{eq phi}
=c_{\mu}\Bigl\{\int_{0}^{\infty}|\phi|^{q}r^{2q-pq+p} s \Bigr\}^{\frac{1}{q}}
=c_{\mu}\Bigl\{\int_{0}^{\infty}|\phi|^{q} sr^{q}\Bigr\}^{\frac{1}{q}}.
\end{equation}

\noindent Moreover, by (\ref{acota Lp}) we have that
\begin{equation}\label{acota phi}
\|\phi\|_{L^{q}(sr^{q})}\leq \rho_{0}(\phi),
\end{equation}
and from (\ref{eqacotaf}), (\ref{eq phi}) and (\ref{acota phi}) we obtain that $f\in\mathcal{B'}$.

Now, let $B$ be a bounded set in $\mathcal{B}$ then
$$\sup_{\phi\in \mathcal{B}}\Bigl|\int_{0}^{\infty}f\phi\Bigr|\leq c_{\mu}\|f\|_{L^{p}(sr^{p})}\sup_{\phi\in \mathcal{B}}\|\phi\|_{L^{q}(sr^{q})}\leq c_{\mu}\|f\|_{L^{p}(sr^{p})}\sup_{\phi\in \mathcal{B}}\rho_{0}(\phi).$$
Consequently, the topology in $L^{p}(sr^{p})$ induced by $\mathcal{B'}$ with strong topology is weaker than the usual topology.
\end{remark}
\begin{remark} By Remark \ref{B debil H},  $\mathcal{B'}\subset \mathcal{H'}_{\mu}$. Moreover, from the continuity of  the Bessel operator in $\mathcal{B}$, we can consider $S_{\mu}$  in $\mathcal{B'}$ as adjoint operator of $S_{\mu}$  in $\mathcal{B}$, that is
$$(S_{\mu}T,\phi)=(T,S_{\mu}\phi), \quad T\in\mathcal{B'}, \phi\in\mathcal{B},$$
and we denote with $(S_{\mu})_{\mathcal{B'}}$ the part of Bessel operator in $\mathcal{B'}$
\end{remark}
\begin{teo} The operator $-(S_{\mu})_{\mathcal{B'}}$ is continuous and non negative considering the strong topology in $\mathcal{B'}$.
\end{teo}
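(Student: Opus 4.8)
The plan is to obtain both assertions by duality from Theorem \ref{cont smu in B}, which states that $A:=-(S_{\mu})_{\mathcal{B}}$ is continuous and non-negative on the Fr\'echet space $\mathcal{B}$, together with the observation made just above that $-(S_{\mu})_{\mathcal{B'}}$ is exactly the transpose $A'$ of $A$, defined by $(A'T,\phi)=(T,A\phi)$ on all of $\mathcal{B'}$ (this uses the continuity of $(S_\mu)_{\mathcal B}$). Recall that in the setting of \cite{ms} an operator in a sequentially complete locally convex space is non-negative precisely when $(-\infty,0)$ lies in its resolvent set and the family $\{\lambda(\lambda+A)^{-1}:\lambda>0\}$ is equicontinuous. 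Since $\mathcal{B'}$ with the strong topology is sequentially complete (as recorded in the Remark opening this section), it suffices to verify continuity of $A'$ and these two conditions for $A'$.

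For continuity of $A'=-(S_{\mu})_{\mathcal{B'}}$ I would use that a continuous endomorphism of a locally convex space carries bounded sets into bounded sets. Given a bounded $B\subset\mathcal{B}$, the set $A(B)$ is bounded, and from $(A'T,\phi)=(T,A\phi)$ one reads off that $A'$ maps the polar $(A(B))^{\circ}$ into $B^{\circ}$: if $|(T,\psi)|\le 1$ for all $\psi\in A(B)$, then $|(A'T,\phi)|=|(T,A\phi)|\le 1$ for all $\phi\in B$. As the polars of bounded sets form a base of neighbourhoods of $0$ for the strong topology on $\mathcal{B'}$, this is continuity of $A'$ at the origin, hence everywhere by linearity.

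The non-negativity rests on transporting the resolvent through the transpose. For each $\lambda>0$, non-negativity of $A$ gives that $\lambda+A=\lambda-(S_{\mu})_{\mathcal{B}}$ is a topological isomorphism of $\mathcal{B}$ with continuous inverse $R_{\lambda}:=(\lambda+A)^{-1}$, and that $\{\lambda R_{\lambda}:\lambda>0\}$ is equicontinuous. Taking transposes, and using $(\lambda+A)'=\lambda+A'=\lambda-(S_{\mu})_{\mathcal{B'}}$ together with $R_{\lambda}'(\lambda+A)'=((\lambda+A)R_{\lambda})'=I$ and the symmetric identity $(\lambda+A)'R_{\lambda}'=I$, I conclude that $\lambda-(S_{\mu})_{\mathcal{B'}}$ is invertible on $\mathcal{B'}$ with inverse $R_{\lambda}'$. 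In particular $(-\infty,0)$ is contained in the resolvent set of $A'$.

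It remains to check that $\{\lambda R_{\lambda}':\lambda>0\}$ is equicontinuous for the strong topology, and this is the step I expect to be the crux. Here I would invoke the general fact that the transpose of an equicontinuous family is equicontinuous for the strong duals. Concretely, equicontinuity of $\{\lambda R_{\lambda}\}$ forces $\bigcup_{\lambda>0}\lambda R_{\lambda}(B)$ to be bounded for every bounded $B\subset\mathcal{B}$, since a neighbourhood of $0$ absorbing $B$ is mapped uniformly into a fixed neighbourhood, which then absorbs the whole union. Choosing a bounded set $B_{1}$ containing this union, one gets, for $T\in B_{1}^{\circ}$ and $\phi\in B$, the estimate $|(\lambda R_{\lambda}'T,\phi)|=|(T,\lambda R_{\lambda}\phi)|\le 1$, whence $\lambda R_{\lambda}'(B_{1}^{\circ})\subset B^{\circ}$ for every $\lambda>0$; this is the desired equicontinuity. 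Combining it with the invertibility established above shows that $A'=-(S_{\mu})_{\mathcal{B'}}$ is non-negative, which together with its continuity completes the proof.
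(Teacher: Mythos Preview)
Your argument is correct and follows the same duality route the paper takes: the paper establishes continuity by citing the identical computation for the Laplacian in \cite{stu} and deduces non-negativity from the general transpose/adjoint machinery in \cite[pp.~24]{ms}, whereas you have simply written out those standard facts (transpose of a continuous operator is strongly continuous via polars, transpose of the resolvent gives the resolvent of the transpose, and transpose of an equicontinuous family is equicontinuous) explicitly. There is no genuine difference in method, only in the level of detail.
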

\begin{proof} The proof of continuity is identical to the proof given in \cite[Theorem 3.5, pp. 264]{stu} for the Laplacean operator and the non negativity is a consecuence of theory of fractional powers in distributional spaces (see \cite[pp. 24]{ms}).
\end{proof}
\begin{remark} The operator $(S_{\mu})_{\mathcal{B'}}$ is not injective because the function $x^{\mu+\frac{1}{2}}$ is solution of $S_{\mu}=0$ and belongs to $\mathcal{B'}$, in fact
$$|(x^{\mu+\frac{1}{2}},\phi)|\leq c_{\mu}\|\phi\|_{L^{1}(sr)}\leq c_{\mu}\rho_{0}(\phi),  \:\: (\phi\in\mathcal{B}).$$
\end{remark}
\vskip.2in
According to representation of fractional powers of operators in locally convex spaces given in \cite{ms}, for $\textrm{Re}\:\alpha>0$, $n>\textrm{Re}\:\alpha$ , $T\in\mathcal{B'}$, $(-(S_{\mu})_{\mathcal{B'}})^{\alpha}$ is given by

$$(-(S_{\mu})_{\mathcal{B'}})^{\alpha}T=\frac{\Gamma(n)}{\Gamma(\alpha)\Gamma(n-\alpha)}\int_{0}^{\infty}\lambda^{\alpha-1}\Bigl[-(S_{\mu})_{\mathcal{B'}}(\lambda- (S_{\mu})_{\mathcal{B'}})^{-1}\Bigr]^{n}T\:d\lambda.$$

From the general theory of fractional power in sequentially complete locally convex spaces (see \cite[ pp. 134]{ms}), we deduce immediately the properties of powers as multiplicativity, spectral mapping theorem, and
\vskip.2in

1) If $\textrm{Re}\:\alpha> 0$ then

\begin{equation}\label{eqduality}
\Bigl((-(S_{\mu})_{\mathcal{B}})^{\alpha}\Bigr)^{\ast}=\Bigl((-(S_{\mu})_{\mathcal{B}})^{\ast}\Bigr)^{\alpha}.
\end{equation}
Since $(-(S_{\mu})_{\mathcal{B}})^{\ast}=-(S_{\mu})_{\mathcal{B'}}$ then from (\ref{eqduality}) we obtain the following duality formula
$$((-(S_{\mu})_{\mathcal{B'}})^{\alpha}T,\phi)=(T,(-(S_{\mu})_{\mathcal{B}})^{\alpha}\phi), \:\: (\phi\in\mathcal{B}, T\in\mathcal{B'}).$$
\vskip.2in

2) Since the usual topology in $L^{p}(sr^{p})$ is stronger than the topology induced  by $\mathcal{B'}$ then we can deduce that

$$\bigl[(-(S_{\mu})_{\mathcal{B'}})^{\alpha}\bigr]_{L^{p}(sr^{p})}=((-(S_{\mu,p}))^{\alpha},$$
if $\textrm{Re}\:\alpha> 0$, (see \cite[Theorem 12.1.6,  pp. 284]{ms}).

\section{Appendix}
\subsection{Some properties of Hankel transform in Lebesgue spaces}
\begin{proposition}\label{propplan} If  $f,g\in L^{1}(sr)$, then:

(1)  $\mathit{h}_{\mu }f\in L^{\infty }(r).$

(2)
\begin{equation} \label{parseval}
\int_{0}^{\infty }\mathit{h}_{\mu }f g=\int_{0}^{\infty}f\mathit{h}_{\mu}g.
\end{equation}

\end{proposition}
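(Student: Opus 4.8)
The plan is to prove both parts by direct estimation, exploiting the single analytic fact recorded in Remark~\ref{notatra hankel}: the normalized kernel $(xy)^{-\mu}J_{\mu}(xy)$ is bounded on $(0,\infty)\times(0,\infty)$ by some constant $M$ when $\mu>-\frac{1}{2}$ (see \cite[(1), pp. 49]{wa}). Everything then reduces to recognizing that the weight $sr$ equals $x^{\mu+\frac{1}{2}}/c_{\mu}$, so that the powers of $x$ and $y$ produced by the kernel are exactly absorbed by this weight.

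For part (1), I would estimate $r(x)\,\mathit{h}_{\mu}f(x)$ pointwise. Writing $J_{\mu}(xy)=(xy)^{\mu}[(xy)^{-\mu}J_{\mu}(xy)]$ and using $|(xy)^{-\mu}J_{\mu}(xy)|\le M$, one obtains $|\sqrt{xy}\,J_{\mu}(xy)|\le M(xy)^{\mu+\frac{1}{2}}$. Multiplying the definition~(\ref{transf Hankel}) of $\mathit{h}_{\mu}f$ by $r(x)=x^{-\mu-\frac{1}{2}}$ and pulling the $x$-power out of the $y$-integral, the factors $x^{-\mu-\frac{1}{2}}$ and $x^{\mu+\frac{1}{2}}$ cancel, leaving $|r(x)\,\mathit{h}_{\mu}f(x)|\le M\int_{0}^{\infty}y^{\mu+\frac{1}{2}}|f(y)|\,dy$, a bound independent of $x$. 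Since $y^{\mu+\frac{1}{2}}=c_{\mu}\,s(y)r(y)$, the right-hand side equals $Mc_{\mu}\|f\|_{L^{1}(sr)}$; hence $\mathit{h}_{\mu}f\in L^{\infty}(r)$, with the quantitative estimate $\|\mathit{h}_{\mu}f\|_{L^{\infty}(r)}\le Mc_{\mu}\|f\|_{L^{1}(sr)}$.

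For part (2), I would substitute the definition of $\mathit{h}_{\mu}f$ into the left-hand integral and interchange the order of integration; since the kernel $\sqrt{xy}\,J_{\mu}(xy)$ is symmetric in $x$ and $y$, the interchanged integral is literally $\int_{0}^{\infty}f(y)\,\mathit{h}_{\mu}g(y)\,dy$, giving the identity at once. The interchange is the only delicate point, so the main obstacle is verifying the hypothesis of Fubini's theorem, namely the absolute integrability of the double integral. This follows from the same kernel bound, which factors as $|\sqrt{xy}\,J_{\mu}(xy)|\le M x^{\mu+\frac{1}{2}}y^{\mu+\frac{1}{2}}$, so that $\int_{0}^{\infty}\int_{0}^{\infty}|\sqrt{xy}\,J_{\mu}(xy)|\,|f(y)|\,|g(x)|\,dy\,dx\le M\bigl(\int_{0}^{\infty}x^{\mu+\frac{1}{2}}|g(x)|\,dx\bigr)\bigl(\int_{0}^{\infty}y^{\mu+\frac{1}{2}}|f(y)|\,dy\bigr)=Mc_{\mu}^{2}\,\|g\|_{L^{1}(sr)}\|f\|_{L^{1}(sr)}<\infty$, using $f,g\in L^{1}(sr)$. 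With Fubini justified the identity~(\ref{parseval}) is immediate, and no further computation is required.
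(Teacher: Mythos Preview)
Your proof is correct and follows essentially the same approach as the paper: part (1) is handled by rewriting $r(x)\,\mathit{h}_{\mu}f(x)$ so that the bounded kernel $(xy)^{-\mu}J_{\mu}(xy)$ appears against $y^{\mu+\frac{1}{2}}|f(y)|$, and part (2) by invoking Fubini (the paper says Tonelli--Hobson) after noting the symmetry of the kernel. The only difference is that you spell out the absolute-integrability check for part (2) explicitly, whereas the paper simply declares it an immediate consequence of Tonelli--Hobson.
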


\begin{proof}
The assertion  $(1)$ follows from the equalities
\begin{equation*}
x^{-\mu -\frac{1}{2}}\mathit{h}_{\mu }(f)(x)=x^{-\mu -\frac{1}{2}%
}\int_{0}^{\infty }\sqrt{xy}J_{\mu }(xy)f(y)dy=\int_{0}^{\infty }(xy)^{-\mu
}J_{\mu }(xy)f(y)y^{\mu +\frac{1}{2}}dy.
\end{equation*}

\noindent The existence and equality of the integrals in (\ref{parseval}) are an immediate consequence of Tonelli-Hobson
theorem.
\end{proof}

In \cite{HI} is studied a version of Hankel transform given by:

\begin{equation*}
H_{\mu }(f)(x)=c_{\mu }\int_{0}^{\infty }\left( xy\right) ^{-\mu
}J_{\mu }(xy)f(y)s(y)dy,
\end{equation*}
for $f\in L^{1}(s)$. $H_{\mu}$ is related whit $\mathit{h}_{\mu}$ by
\begin{equation*}
\mathit{h}_{\mu }(f)=r^{-1}H_{\mu }(rf).
\end{equation*}
for $f\in L^{1}(sr)$ ($r$ and $s$ like as in section 2). From this relation and the inversion theorem
for  $H_{\mu}$ (see \cite[Corollary 2e, pp 316]{HI}) , we obtain the
following inversion  theorem for $\mathit{h}_{\mu}$

\begin{proposition}\label{invHankel} If  $f\in L^{1}(sr)$ and $\mathit{h}_{\mu }(f)\in L^{1}(sr)$  then $f$ may be
redefined on a set of measure zero so that it is continuous in
$(0,\infty)$ and

\begin{equation}
f(x)=\int_{0}^{\infty }\sqrt{xy}J_{\mu
}(xy)\mathit{h}_{\mu}(f)(y)dy=\mathit{h}_{\mu}(\mathit{h}_{\mu}(f))(x)
\end{equation}

\end{proposition}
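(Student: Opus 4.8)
The plan is to transfer the inversion problem to the transform $H_{\mu}$, for which an inversion theorem is already cited, by means of the identity $\mathit{h}_{\mu}(g)=r^{-1}H_{\mu}(rg)$ valid for $g\in L^{1}(sr)$. First I would record that multiplication by $r$ is an isometric bijection of $L^{1}(sr)$ onto $L^{1}(s)$, since $\int_{0}^{\infty}|rg|\,s\,dx=\int_{0}^{\infty}|g|\,rs\,dx$. Thus from $f\in L^{1}(sr)$ we obtain $rf\in L^{1}(s)$, so that $H_{\mu}(rf)$ is defined and $\mathit{h}_{\mu}(f)=r^{-1}H_{\mu}(rf)$, equivalently $r\,\mathit{h}_{\mu}(f)=H_{\mu}(rf)$. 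The second hypothesis $\mathit{h}_{\mu}(f)\in L^{1}(sr)$ then says precisely that $H_{\mu}(rf)=r\,\mathit{h}_{\mu}(f)\in L^{1}(s)$.

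With both $rf$ and its transform $H_{\mu}(rf)$ lying in $L^{1}(s)$, the inversion theorem for $H_{\mu}$ (\cite[Corollary 2e, pp. 316]{HI}) applies to $g=rf$: it permits redefining $rf$ on a set of measure zero so that it is continuous on $(0,\infty)$ and $H_{\mu}(H_{\mu}(rf))=rf$. Translating back through the relation yields
$$\mathit{h}_{\mu}(\mathit{h}_{\mu}(f))=r^{-1}H_{\mu}\bigl(r\,\mathit{h}_{\mu}(f)\bigr)=r^{-1}H_{\mu}\bigl(H_{\mu}(rf)\bigr)=r^{-1}(rf)=f,$$
where the first equality is the defining identity applied to $g=\mathit{h}_{\mu}(f)\in L^{1}(sr)$. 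Since $r^{-1}=x^{\mu+\frac{1}{2}}$ is continuous on $(0,\infty)$ and $rf$ has been redefined to be continuous, the product $f=r^{-1}(rf)$ is continuous after the same redefinition. Finally, the first displayed integral in the statement is nothing but the definition (\ref{transf Hankel}) of $\mathit{h}_{\mu}$ evaluated at $\mathit{h}_{\mu}(f)\in L^{1}(sr)$, whose Hankel transform is well defined by Remark \ref{notatra hankel}, so that integral equals $\mathit{h}_{\mu}(\mathit{h}_{\mu}(f))(x)$ and hence $f(x)$.

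The only genuine obstacle is the bookkeeping at the interface between the two transforms: one must verify that the two hypotheses imposed on $f$ (membership of $f$ and of $\mathit{h}_{\mu}(f)$ in $L^{1}(sr)$) correspond exactly, under the weight $r$, to the hypotheses of the cited $H_{\mu}$-inversion theorem (membership of $rf$ and of $H_{\mu}(rf)$ in $L^{1}(s)$). Once this correspondence is established, the argument is a direct transcription, with no further analytic estimates required.
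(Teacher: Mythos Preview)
Your argument is correct and follows exactly the approach indicated in the paper: transfer the hypotheses to the transform $H_{\mu}$ via the identity $\mathit{h}_{\mu}(g)=r^{-1}H_{\mu}(rg)$, apply Hirschman's inversion theorem \cite[Corollary 2e, pp.~316]{HI}, and translate back. In fact the paper gives only this sketch, whereas you have filled in the bookkeeping (that $f,\mathit{h}_{\mu}(f)\in L^{1}(sr)$ corresponds precisely to $rf,H_{\mu}(rf)\in L^{1}(s)$) explicitly.
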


\begin{remark} \label{inverHankelHmu}
From the above Propositon we deduce immediately the validity of equality $\mathit{h}_{\mu}\mathit{h}_{\mu}f=f$ in $\mathcal{H}_{\mu}$ and $\mathcal{H'}_{\mu}$.

\end{remark}

\vskip.3in

 With $L^{p}(0,\infty)$ we denote the usual Lebesgue space
given by (\ref{defespLp}) with   $w(x)=1$.

\begin{remark}\label{obsHankelL1} Since the function $(z)^{\frac{1}{2}}J_{\mu}(z)$
is bounded in $(0,\infty)$ for $\mu > -\frac{1}{2}$, then for $f\in
L^{1}(0,\infty)$ we have that $\mathit{h}_{\mu}f$ is continuous
and $\|\mathit{h}_{\mu}f\|_{\infty}\leq C\|f\|_{1}$.
\end{remark}
As usual, we denote with $C_{0}(0,\infty)$ the set of continuous
functions in $(0,\infty)$ and vanishes at infinity. We have the
following proposition:

\begin{proposition} \label{ImHankL1} $\mathit{h}_{\mu}(L^{1}(0,\infty))\subset C_{0}(0,\infty)$

\end{proposition}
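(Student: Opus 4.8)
The plan is to prove the inclusion by a density argument, exploiting that $\mathit{h}_{\mu}$ is a bounded linear operator from $L^{1}(0,\infty)$ into the space of bounded continuous functions and that $C_{0}(0,\infty)$ is closed under uniform convergence. First I would record the two ingredients already at hand. By Remark \ref{obsHankelL1}, for every $f\in L^{1}(0,\infty)$ the transform $\mathit{h}_{\mu}f$ is continuous on $(0,\infty)$ and satisfies $\|\mathit{h}_{\mu}f\|_{\infty}\leq C\|f\|_{1}$; this is underwritten by the kernel bound $|\sqrt{xy}J_{\mu}(xy)|\leq M$ valid for $\mu>-\frac{1}{2}$, which already makes $\mathit{h}_{\mu}f$ well defined on all of the unweighted space $L^{1}(0,\infty)$. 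Thus $\mathit{h}_{\mu}$ is continuous from $L^{1}(0,\infty)$ into the Banach space $(C_{b}(0,\infty),\|\cdot\|_{\infty})$ of bounded continuous functions, and since $C_{0}(0,\infty)$ is a closed subspace of $C_{b}(0,\infty)$ for the uniform norm, it suffices to show $\mathit{h}_{\mu}\phi\in C_{0}(0,\infty)$ for all $\phi$ in a dense subset of $L^{1}(0,\infty)$.

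As that dense subset I would take $\mathcal{D}(0,\infty)$, which is dense in $L^{1}(0,\infty)$ and satisfies $\mathcal{D}(0,\infty)\subset\mathcal{H}_{\mu}$ (cf.\ Remark \ref{regdistrib}). Because $\mathit{h}_{\mu}$ is an automorphism of $\mathcal{H}_{\mu}$ (Remark \ref{notatra hankel}), we get $\mathit{h}_{\mu}\phi\in\mathcal{H}_{\mu}$ for every $\phi\in\mathcal{D}(0,\infty)$. It therefore remains to verify the inclusion $\mathcal{H}_{\mu}\subset C_{0}(0,\infty)$. Any $\psi\in\mathcal{H}_{\mu}$ is smooth, hence continuous, and from the defining seminorms (\ref{eqsemihmu}) one has $|x^{-\mu-\frac{1}{2}}\psi(x)|\leq\gamma_{m,0}^{\mu}(\psi)\,x^{-m}$ for every $m$; choosing $m>\mu+\frac{1}{2}$ gives $|\psi(x)|\leq\gamma_{m,0}^{\mu}(\psi)\,x^{\mu+\frac{1}{2}-m}\to0$ as $x\to\infty$, so $\psi\in C_{0}(0,\infty)$.

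Finally, given an arbitrary $f\in L^{1}(0,\infty)$, I would pick $\phi_{n}\in\mathcal{D}(0,\infty)$ with $\phi_{n}\to f$ in $L^{1}$. Then $\|\mathit{h}_{\mu}\phi_{n}-\mathit{h}_{\mu}f\|_{\infty}\leq C\|\phi_{n}-f\|_{1}\to0$, so the functions $\mathit{h}_{\mu}\phi_{n}\in C_{0}(0,\infty)$ converge uniformly to $\mathit{h}_{\mu}f$, whence $\mathit{h}_{\mu}f\in C_{0}(0,\infty)$ by closedness of $C_{0}(0,\infty)$. The only genuinely computational point is the decay estimate establishing $\mathcal{H}_{\mu}\subset C_{0}(0,\infty)$; everything else is the standard \emph{bounded operator on a dense subset with closed target} pattern. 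The mild subtlety I would watch for is matching the unweighted $L^{1}(0,\infty)$ appearing here (rather than the weighted $L^{1}(sr)$ used elsewhere) with the correct kernel bound, but this is exactly what Remark \ref{obsHankelL1} supplies, so I expect no real obstacle.
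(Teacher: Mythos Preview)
Your proof is correct and follows essentially the same density argument as the paper: boundedness of $\mathit{h}_{\mu}:L^{1}(0,\infty)\to (C_{b},\|\cdot\|_{\infty})$ from Remark~\ref{obsHankelL1}, a dense subset mapped into $C_{0}$, and closedness of $C_{0}$. The only cosmetic differences are that the paper takes $\mathcal{H}_{\mu}$ itself as the dense subset (after first recording the auxiliary inclusion $L^{1}(sr)\cap L^{\infty}(r)\subset L^{1}(0,\infty)$, needed later in Proposition~\ref{suavdomSmuy}) and leaves $\mathcal{H}_{\mu}\subset C_{0}(0,\infty)$ implicit, whereas you use $\mathcal{D}(0,\infty)$ and make that inclusion explicit via the seminorm estimate.
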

\begin{proof}
First, we observe that
\begin{equation}\label{eqincL1}
 L^{1}(sr)\cap L^{\infty }(r)\subset L^{1}(0,\infty).
 \end{equation}

\noindent Indeed,
 $$\int_{0}^{\infty}|f|\:dx=\int_{0}^{\infty}|f| rr^{-1}\:dx=\int_{0}^{1}|f| rr^{-1}\:dx+\int_{1}^{\infty}|f|
 rr^{-1}\:dx\leq $$
 $$\leq \| f \|_{L^{\infty}(r)}\int_{0}^{1}r^{-1}\:dx+\int_{1}^{\infty}|f|
r^{-1}\:dx =C \| f\|_{L^{\infty}(r)}+c_{\mu }\| f\|_{L^{1}(rs)},$$ because
$r< 1$ in $[1,\infty)$ , $\mu +\frac{1}{2}> 0$ and $rs=c^{-1}_{\mu }r^{-1}$.

By (\ref{eqincL1}) and (\ref{inclusHm}) we deduce that $\mathcal{H}_{\mu}\subset L^{1}(0,\infty)$. Since $\mathcal{D}(0,\infty)\subset \mathcal{H}_{\mu}$  then
$\mathcal{H}_{\mu}$ is dense in $L^{1}(0,\infty)$. Given $f\in L^{1}(0,\infty)$ and
$\{\phi_{n}\}\subset \mathcal{H}_{\mu}$ such that $\phi_{n}\rightarrow f$ in $L^{1}(0,\infty)$ then by Remark \ref{obsHankelL1} (see Apendix)
$\mathit{h}_{\mu}(\phi_{n})\rightarrow \mathit{h}_{\mu}(f)$ uniformly. Since
 $\mathit{h}_{\mu}(\phi_{n})\in C_{0}(0,\infty)$ then $\mathit{h}_{\mu}(f)\in C_{0}(0,\infty)$.

\end{proof}
\begin{remark} For $\mu > -\frac{1}{2}$,  $\mathcal{H}_{\mu}$ is a dense subset of $L^{2}(0,\infty)$ and  for $\phi\in \mathcal{H}_{\mu}$ we have that

$$\|\mathit{h}_{\mu}\phi \|_{2}=\|\phi\|_{2},$$
So, we can consider the extension to $L^{2}(0,\infty)$ of $\mathit{h}_{\mu}$ and
$$\|\mathit{h}_{\mu}f\|_{2}=\|f\|_{2},$$
for $f\in L^{2}(0,\infty)$.

\end{remark}
\subsection{Hankel convolution}
\begin{proposition}\label{propD} $D_{\mu }(x,y,z)$ satisfies the following properties:
\begin{enumerate}
\item $D_{\mu }(x,y,z)\geq 0$  for  $x,y,z \in (0,\infty)$.
\item $\int_{0}^{\infty} \sqrt{zt} J_{\mu}(zt) D_{\mu }(x,y,z)\: dz=\sqrt{xt} J_{\mu}(xt)\sqrt{yt} J_{\mu}(yt)t^{-\mu-\frac{1}{2}}$ for  $x,y,t\in (0,\infty)$.
\item $\int_{0}^{\infty} z^{\mu+\frac{1}{2}}D_{\mu }(x,y,z)\: dz=c_{\mu}^{-1}x^{\mu+\frac{1}{2}}y^{\mu+\frac{1}{2}}$  for  $x,y \in (0,\infty)$.
\end{enumerate}
\end{proposition}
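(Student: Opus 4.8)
The plan is to dispatch the three assertions separately, with essentially all the work concentrated in item (2).

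Item (1) is immediate from the definition. On the open region $|x-y|<z<x+y$ the area $A(x,y,z)$ is strictly positive, so $(A(x,y,z))^{2\mu-1}$ is a well-defined positive real number; the prefactor $\frac{2^{\mu-1}(xyz)^{-\mu+\frac12}}{\Gamma(\mu+\frac12)\sqrt\pi}$ is positive because $x,y,z>0$ and $\mu+\frac12>0$ forces $\Gamma(\mu+\frac12)>0$. Off that region $D_\mu$ vanishes, so $D_\mu(x,y,z)\geq0$ throughout. The same hypothesis $\mu>-\frac12$, i.e.\ $\mu-\frac12>-1$, guarantees that $(A(x,y,z))^{2\mu-1}$ is integrable near the endpoints $z=|x-y|$ and $z=x+y$, so every integral below converges absolutely over the bounded interval $[\,|x-y|,x+y\,]$.

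For item (2) I would reduce the claim to the classical product (Gegenbauer) formula for Bessel functions, \cite{wa},
\[
\frac{J_\mu(u)}{u^\mu}\,\frac{J_\mu(v)}{v^\mu}
=\frac{1}{\sqrt\pi\,2^\mu\,\Gamma(\mu+\tfrac12)}\int_0^\pi \frac{J_\mu(w)}{w^\mu}\,(\sin\theta)^{2\mu}\,d\theta,
\qquad w=\sqrt{u^2+v^2-2uv\cos\theta},
\]
valid for $\mu>-\tfrac12$. Setting $u=xt$, $v=yt$ one has $w=zt$, where $z=\sqrt{x^2+y^2-2xy\cos\theta}$ is the third side of the triangle with sides $x,y$ and included angle $\theta$; as $\theta$ runs over $[0,\pi]$, $z$ runs over $[\,|x-y|,x+y\,]$. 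Using the law of cosines $z\,dz=xy\sin\theta\,d\theta$ together with $A(x,y,z)=\tfrac12 xy\sin\theta$, so that $\sin\theta=2A(x,y,z)/(xy)$, I would change the variable of integration from $\theta$ to $z$; then $(\sin\theta)^{2\mu}\,d\theta$ becomes $2^{2\mu-1}(xy)^{-2\mu}\,z\,(A(x,y,z))^{2\mu-1}\,dz$. Substituting this and clearing the powers of $x,y,z,t$ turns Gegenbauer's identity exactly into $\int_0^\infty \sqrt{zt}\,J_\mu(zt)\,D_\mu(x,y,z)\,dz=\sqrt{xt}J_\mu(xt)\sqrt{yt}J_\mu(yt)\,t^{-\mu-\frac12}$. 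This is the heart of the proposition and the only real obstacle; the difficulty is purely the bookkeeping of constants (the factor $2^{2\mu-1}$, the powers of $xy$ and $t$, and the Gamma factors must all cancel against the prefactor of $D_\mu$), since no analytic subtlety arises beyond the change of variables on a compact interval.

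Item (3) I would prove directly, which doubles as a consistency check for (2). Writing $z^{\mu+\frac12}(xyz)^{-\mu+\frac12}=(xy)^{-\mu+\frac12}z$ and $A(x,y,z)^{2\mu-1}=4^{1-2\mu}\bigl([(x+y)^2-z^2][z^2-(x-y)^2]\bigr)^{\mu-\frac12}$, the substitution $u=z^2$ reduces the integral to the beta integral
\[
\int_{(x-y)^2}^{(x+y)^2}\bigl((x+y)^2-u\bigr)^{\mu-\frac12}\bigl(u-(x-y)^2\bigr)^{\mu-\frac12}\,du
=(4xy)^{2\mu}\,\frac{\Gamma(\mu+\frac12)^2}{\Gamma(2\mu+1)} .
\]
Collecting the constants and invoking the Legendre duplication formula $2^{2\mu}\Gamma(\mu+\tfrac12)\Gamma(\mu+1)=\sqrt\pi\,\Gamma(2\mu+1)$ yields $\int_0^\infty z^{\mu+\frac12}D_\mu(x,y,z)\,dz=\frac{(xy)^{\mu+\frac12}}{2^\mu\Gamma(\mu+1)}=c_\mu^{-1}x^{\mu+\frac12}y^{\mu+\frac12}$, as claimed. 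Alternatively, (3) follows from (2) on letting $t\to0^+$ and using $\sqrt{zt}\,J_\mu(zt)\sim (zt)^{\mu+\frac12}/(2^\mu\Gamma(\mu+1))$, with dominated convergence justifying the passage to the limit; I would nonetheless prefer the direct computation, as it avoids the limit argument and keeps the appeal to special-function identities explicit.
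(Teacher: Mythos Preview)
Your argument is correct and, for items (1) and (2), coincides with the paper's proof: both dispatch (1) by inspection and reduce (2) to the Gegenbauer product formula for $J_\mu$ (Watson \cite{wa}, p.~367) via the change of variable $z=\sqrt{x^2+y^2-2xy\cos\theta}$, with the same area identity $A=\tfrac12 xy\sin\theta$ and the same bookkeeping of constants.

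For item (3) you take a slightly different route. The paper reuses the trigonometric substitution from (2), reducing the integral to $\int_0^\pi \sin^{2\mu}\phi\,d\phi=\sqrt\pi\,\Gamma(\mu+\tfrac12)/\Gamma(\mu+1)$ and then simplifying. You instead substitute $u=z^2$ to obtain the Euler Beta integral $(4xy)^{2\mu}\Gamma(\mu+\tfrac12)^2/\Gamma(2\mu+1)$ and close with the Legendre duplication formula. Both computations are elementary and of comparable length; yours has the minor advantage of being self-contained and independent of the machinery set up for (2), while the paper's version is a natural by-product of the change of variables already in hand. Your alternative via $t\to0^+$ in (2) also works, with dominated convergence justified by the uniform bound on $(zt)^{-\mu}J_\mu(zt)$ and the compact $z$-support.
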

\begin{proof}  Assertion  (1) follows immediately.

\noindent  To proof  (2), first we observe that
$$\Bigl|\sqrt{zt} J_{\mu}(zt) D_{\mu }(x,y,z)\Bigr|=\Bigl|\sqrt{zt}(zt)^{\mu} (zt)^{-\mu}J_{\mu}(zt) D_{\mu }(x,y,z)\Bigr|\leq C t^{\mu+\frac{1}{2}}\Bigl|z^{\mu+\frac{1}{2}} D_{\mu }(x,y,z)\Bigr|=$$
$$=C t^{\mu+\frac{1}{2}}\Biggl|z^{\mu+\frac{1}{2}}\frac{2^{\mu-1}(xyz)^{-\mu+\frac{1}{2}}}{\Gamma(\mu+\frac{1}{2})\sqrt{\pi}}(A(x,y,z))^{2\mu-1}\Biggr|=$$
$$=C t^{\mu+\frac{1}{2}}\frac{(xy)^{-\mu+\frac{1}{2}}}{2^{3\mu-1}\Gamma(\mu+\frac{1}{2})\sqrt{\pi}}\Biggl|z((x+y)^{2}-z^{2})^{\mu-\frac{1}{2}}(z^{2}-(x-y)^{2})^{\mu-\frac{1}{2}}
\Biggr|$$
and the last function is integrable for  $z\in [\:\vert x-y \vert\:,\:x+y\:]$ and $\mu>-\frac{1}{2}$. So, we conclude that $z^{\mu+\frac{1}{2}}D_{\mu }(x,y,z)$ is integrable in $(0,\infty)$ for $\mu>-\frac{1}{2}$. Now, we consider the change of variables  $T:(0,\pi)\rightarrow (0,\infty)$ given by $T(\phi)=\sqrt{x^{2}+y^{2}-2xy \cos \phi}$.
Then $\vert x-y \vert< T(\phi)<x+y$ , $\frac{d}{d\phi}T(\phi)=\frac{xy \sin \phi}{\sqrt{x^{2}+y^{2}-2xy \cos \phi}}$ and $A(x,y,T(\phi))=\frac{xy}{2}\sin \phi$. So,
$$\int_{0}^{\infty} \sqrt{zt} J_{\mu}(zt) D_{\mu }(x,y,z)\: dz=\frac{2^{\mu-1}(xy)^{-\mu+\frac{1}{2}}}{\Gamma(\mu+\frac{1}{2})\sqrt{\pi}}\int_{\vert x-y \vert}^{x+y} \sqrt{zt} J_{\mu}(zt)z^{-\mu+\frac{1}{2}}\:(A(x,y,z))^{2\mu-1}\: dz=$$

$$=\frac{2^{\mu-1}(xy)^{-\mu+\frac{1}{2}}t^{\frac{1}{2}}}{\Gamma(\mu+\frac{1}{2})\sqrt{\pi}}\int_{0}^{\pi} J_{\mu}\bigl(\sqrt{x^{2}+y^{2}-2xy \cos \phi}\:\:t\bigr) \bigl(\sqrt{x^{2}+y^{2}-2xy \cos \phi}\bigr)^{-\mu+1}.$$
$$.\Bigl(\frac{xy}{2}\sin \phi\Bigr)^{2\mu-1}\frac{xy \sin \phi}{\sqrt{x^{2}+y^{2}-2xy \cos \phi}}\:\:d\phi=$$
\begin{equation}\label{eqint4}
\frac{(xy)^{\mu+\frac{1}{2}}t^{\frac{1}{2}}}{2^{\mu}\Gamma(\mu+\frac{1}{2})\sqrt{\pi}}\int_{0}^{\pi}\frac{ J_{\mu}\bigl(\sqrt{x^{2}+y^{2}-2xy \cos \phi}\:\:t\bigr)}{ \bigl(\sqrt{x^{2}+y^{2}-2xy \cos \phi}\bigr)^{\mu}}\sin^{2\mu}\phi\:\:d\phi.
\end{equation}

\noindent Since
$$\int_{0}^{\pi} \frac{J_{\mu}\bigl(\sqrt{Z^{2}+z^{2}-2zZ \cos \phi}\bigr)}{\bigl(\sqrt{Z^{2}+z^{2}-2zZ \cos \phi}\bigr)^{\mu}}\sin^{2\mu}\phi\:\:d\phi=2^{\mu}\Gamma\Bigl(\mu+\frac{1}{2}\Bigr)\Gamma\Bigl(\frac{1}{2}\Bigr)\frac{J_{\mu}(Z)}{Z^{\mu}}
\frac{J_{\mu}(z)}{z^{\mu}},$$
(see (16) pg. 367 \cite{wa}) valid to $z,Z>0$, $\mu> -\frac{1}{2}$, and considering $Z=xt$ y $z=yt$ we obtain in the last equality
\begin{equation}\label{eqint5}
\int_{0}^{\pi} \frac{J_{\mu}\bigl(\sqrt{x^{2}+y^{2}-2xy \cos \phi}\:\:t\bigr)}{\bigl(\sqrt{x^{2}+y^{2}-2xy \cos \phi}\bigr)^{\mu}t^{\mu}}\sin^{2\mu}\phi\:\:d\phi=2^{\mu}\Gamma\Bigl(\mu+\frac{1}{2}\Bigr)\Gamma\Bigl(\frac{1}{2}\Bigr)\frac{J_{\mu}(xt)}{(xt)^{\mu}}
\frac{J_{\mu}(yt)}{(yt)^{\mu}}.
\end{equation}
Applying (\ref{eqint5}) in (\ref{eqint4}), we obtain that
$$\int_{0}^{\infty} \sqrt{zt} J_{\mu}(zt) D_{\mu }(x,y,z)\: dz=$$

$$\frac{(xy)^{\mu+\frac{1}{2}}t^{\frac{1}{2}}}{2^{\mu}\Gamma(\mu+\frac{1}{2})\sqrt{\pi}} \:\: t^{\mu}2^{\mu}\Gamma\Bigl(\mu+\frac{1}{2}\Bigr)\Gamma\Bigl(\frac{1}{2}\Bigr)\frac{J_{\mu}(xt)}{(xt)^{\mu}}
\frac{J_{\mu}(yt)}{(yt)^{\mu}}=\sqrt{xt} J_{\mu}(xt)\sqrt{yt} J_{\mu}(yt)t^{-\mu-\frac{1}{2}}.$$
\noindent As for (3), we consider again the change of variable $T$
$$
\int_{0}^{\infty} z^{\mu+\frac{1}{2}}D_{\mu }(x,y,z)\: dz=\frac{2^{\mu-1}(xy)^{-\mu+\frac{1}{2}}}{\Gamma(\mu+\frac{1}{2})\sqrt{\pi}}\int_{\vert x-y \vert}^{x+y} z\:(A(x,y,z))^{2\mu-1}\: dz=$$
$$\frac{2^{\mu-1}(xy)^{-\mu+\frac{1}{2}}}{\Gamma(\mu+\frac{1}{2})\sqrt{\pi}}\int_{0}^{\pi}\sqrt{x^{2}+y^{2}-2xy \cos \phi}\:\:\Bigl(\frac{xy}{2}\sin \phi\Bigr)^{2\mu-1} \frac{xy \sin \phi}{\sqrt{x^{2}+y^{2}-2xy \cos \phi}}\:d\phi=$$

$$=\frac{2^{\mu-1}(xy)^{-\mu+\frac{1}{2}}}{\Gamma(\mu+\frac{1}{2})\sqrt{\pi}}\:\:2^{-2\mu+1}(xy)^{2\mu}\:
\frac{\sqrt{\pi}\:\Gamma(\mu+\frac{1}{2})}{\mu\:\Gamma(\mu)}=$$
$$=(xy)^{\mu+\frac{1}{2}} (2^{\mu}\:\Gamma(\mu+1))^{-1}.$$

\end{proof}
\vskip.2in
\noindent Proof of Theorem \ref{teoyoung}
\begin{proof} (1)
Let  $f\in L^{1}(sr)$ and $g\in L^{\infty }(r)$, then:
$$
\int_{0}^{\infty}|f(y)|\biggl[\int_{0}^{\infty}|g(z)|D_{\mu }(x,y,z)dz\biggr]dy \leq $$
$$\leq \left\| g\right\| _{L^{\infty }(r)}\int_{0}^{\infty}|f(y)|\biggl[\int_{0}^{\infty}z^{\mu+\frac{1}{2}} D_{\mu }(x,y,z) dz\biggr]dy =$$
\begin{equation}\label{eqlplinfty}
\left\| g\right\| _{L^{\infty }(r)}\int_{0}^{\infty}|f(y)|y^{\mu+\frac{1}{2}} x^{\mu+\frac{1}{2}}c_{\mu }^{-1} dy=x^{\mu+\frac{1}{2}}\left\| f \right\|
_{L^{1}(sr)}\left\| g\right\| _{L^{\infty }(r)},
\end{equation}
thus ( \ref{conv zem}) exists for every $x\in (0,\infty)$ and by (\ref{eqlplinfty}) we have
$$|x^{-\mu-\frac{1}{2}}(f\sharp g)|\leq \left\| f \right\|
_{L^{1}(sr)}\left\| g\right\| _{L^{\infty }(r)},$$
hence (\ref{young Linfty}).\\
\noindent (2) Given $f\in L^{1}(sr)$  and $g\in L^{p}(sr^{p})$ with $1\leq p <\infty$, set:
\begin{equation}
K(x,z)=\int_{0}^{\infty} x^{-\mu-\frac{1}{2}} z^{-\mu-\frac{1}{2}}f(y)D_{\mu}(x,y,z)c_{\mu } dy.
\end{equation}
We claim that:
$$(1)'\int_{0}^{\infty}|K(x,z)|s(x)dx\leq \left\| f \right\|
_{L^{1}(sr)}=\left\| rf \right\|
_{L^{1}(s)};$$
$$(2)'\int_{0}^{\infty}|K(x,z)|s(z)dz\leq \left\| f \right\|
_{L^{1}(sr)}=\left\| rf \right\|
_{L^{1}(s)}.$$
In fact
$$\int_{0}^{\infty}|K(x,z)|s(x)dx=\int_{0}^{\infty}|K(x,z)|x^{2\mu+1}c_{\mu}^{-1}dx=$$
$$\int_{0}^{\infty}\biggl |\int_{0}^{\infty} x^{-\mu-\frac{1}{2}} z^{-\mu-\frac{1}{2}}f(y)D_{\mu}(x,y,z)c_{\mu } dy \biggr |x^{2\mu+1}c_{\mu}^{-1}dx\leq$$
$$\int_{0}^{\infty}\biggl [\int_{0}^{\infty} x^{\mu+\frac{1}{2}}D_{\mu}(x,y,z)dx \biggr ] |f(y)|z^{-\mu-\frac{1}{2}}dy =$$
$$\int_{0}^{\infty}c_{\mu}^{-1} y^{\mu+\frac{1}{2}}z^{\mu+\frac{1}{2}}|f(y)|z^{-\mu-\frac{1}{2}}dy =\left\| f \right\|
_{L^{1}(sr)}$$
The proof for (2)' is similar.
If  $h\in L^{p}(s)$ then the integral
$$Th(x)=\int_{0}^{\infty}K(x,z)h(z)s(z)dz.  $$
converges absolutely for a.e. $x\in (0,\infty)$ (see \cite{gf}, theorem 6.18), also $Th\in L^{p}(s)$ and
\begin{equation} \label{eqdes1}
\left\| Th\right\|_{L^{p}(s)} \leq \left\| f \right\|
_{L^{1}(rs)} \left\| h \right\|
_{L^{p}(s)}.
\end{equation}
 Then, since $g\in L^{p}(sr^{p})$ then $h=rg\in L^{p}(s)$ and we have that
$$T(rg)(x)=\int_{0}^{\infty}K(x,z)z^{-\mu-\frac{1}{2}}g(z)c_{\mu}^{-1}z^{2\mu+1}dz=$$
$$\int_{0}^{\infty}\Biggl[\int_{0}^{\infty} x^{-\mu-\frac{1}{2}} z^{-\mu-\frac{1}{2}}f(y)D_{\mu}(x,y,z)c_{\mu } dy \Biggr] z^{-\mu-\frac{1}{2}}g(z)c_{\mu}^{-1}z^{2\mu+1}dz=
$$
\begin{equation}\label{eqK}
x^{-\mu-\frac{1}{2}}\int_{0}^{\infty} \Biggl[\int_{0}^{\infty}f(y)D_{\mu}(x,y,z)dy \Biggr] g(z)dz
\end{equation}
With similar considerations applied to $\mid f\mid \in L^{1}(sr)$ and $\mid g\mid \in L^{p}(sr^{p})$ we obtain that for a.e. $x\in (0,\infty)$ the integral
$$\int_{0}^{\infty} \Biggl[\int_{0}^{\infty}\mid f(y)\mid D_{\mu}(x,y,z)dy \Biggr] \mid g(z)\mid \:dz$$
is finite and by application of Tonelli-Hobson theorem in (\ref{eqK}) we conclude that
$$T(rg)(x)=x^{-\mu-\frac{1}{2}} (f\sharp g)(x).$$
From the previous equality and (\ref{eqdes1}) we have
$$
\left\| r (f\sharp g)\right\|_{L^{p}(s)} \leq \left\| f \right\|
_{L^{1}(rs)} \left\| rg \right\|
_{L^{p}(s)},
$$
and so (\ref{young lp}) is valid.
\end{proof}

\vskip.2in
\noindent Proof of Theorem 2

\begin{proof} By (1) and  (3) of Proposition \ref{propD} we have that
$$\int_{0}^{\infty}\int_{0}^{\infty}x_{0}^{-\mu-\frac{1}{2}}y^{\mu+\frac{1}{2}}D_{\mu }(x_{0},y,z) \phi_{n}(z)\:dy dz=1,$$
then
 $$f\sharp \phi_{n}(x_{0})-f(x_{0})=$$
 $$\int_{0}^{\infty}\int_{0}^{\infty}D_{\mu }(x_{0},y,z) \phi_{n}(z)y^{\mu+\frac{1}{2}}\bigl(y^{-\mu-\frac{1}{2}}f(y)-x_{0}^{-\mu-\frac{1}{2}}f(x_{0})\bigr)\:dydz.$$

\noindent  By continuity of  $f$ in $x_{0}$ let $\delta> 0$ such that  $|y^{-\mu-\frac{1}{2}}f(y)-x_{0}^{-\mu-\frac{1}{2}}f(x_{0})|<\varepsilon$  if $|y-x_{0}|<\delta$, and we consider
 $$|f\sharp \phi_{n}(x_{0})-f(x_{0})|\leq |I_{1}|+|I_{2}|,$$
where
\begin{equation}\label{eqI1}
|I_{1}|=\Biggl|\int_{0}^{\delta}\int_{0}^{\infty}D_{\mu }(x_{0},y,z) \phi_{n}(z)y^{\mu+\frac{1}{2}}\bigl(y^{-\mu-\frac{1}{2}}f(y)-x_{0}^{-\mu-\frac{1}{2}}f(x_{0})\bigr)\:dydz \Biggr|
\end{equation}
\begin{equation}\label{eqI2}
|I_{2}|=\Biggl|\int_{\delta}^{\infty}\int_{0}^{\infty}D_{\mu }(x_{0},y,z) \phi_{n}(z)y^{\mu+\frac{1}{2}}\bigl(y^{-\mu-\frac{1}{2}}f(y)-x_{0}^{-\mu-\frac{1}{2}}f(x_{0})\bigr)\:dydz \Biggr|
\end{equation}
Since $D_{\mu }(x_{0},y,z)\neq 0$  only if $|x_{0}-z|<y<x_{0}+z$, and if $0< z<\delta$ then $(|x_{0}-z|,x_{0}+z)\subset (x_{0}-\delta, x_{0}+\delta)$ , then we obtain in (\ref{eqI1}) that
$$|I_{1}|\leq  \varepsilon \int_{0}^{\delta}\int_{0}^{\infty}D_{\mu }(x_{0},y,z) \phi_{n}(z)y^{\mu+\frac{1}{2}}\:dydz= $$
$$\varepsilon \int_{0}^{\delta}x_{0}^{\mu+\frac{1}{2}} z^{\mu+\frac{1}{2}}c_{\mu}^{-1}\phi_{n}(z)\:dz\leq \varepsilon x_{0}^{\mu+\frac{1}{2}}.$$
On the other hand
$$|I_{2}|\leq  2\| f\|_{L^{\infty}(r)}\int_{\delta}^{\infty}\int_{0}^{\infty}D_{\mu }(x_{0},y,z) \phi_{n}(z)y^{\mu+\frac{1}{2}}\:dydz =$$
$$2\| f\|_{L^{\infty}(r)}x_{0}^{\mu+\frac{1}{2}}\int_{\delta}^{\infty}c_{\mu}^{-1}z^{\mu+\frac{1}{2}} \phi_{n}(z)\:dz$$
so, $|I_{2}|\rightarrow 0$  when $n\rightarrow \infty$ and the first assertion has been proven.
Second affirmation follows from the previous proof and the uniformly continuity of $rf$.
\end{proof}
\vskip.2in

\noindent Proof of Proposition 5
\begin{proof} If $f,g\in L^{1}(sr)$, then
$$\mathit{h}_{\mu }\left( f\sharp g\right)(t)=\int_{0}^{\infty} f\sharp g(x)\sqrt{xt}J_{\mu}(xt)\: dx=$$
\begin{equation}\label{eqhakconv}
\int_{0}^{\infty} \Biggl[\int_{0}^{\infty }\int_{0}^{\infty
}D_{\mu }(x,y,z)f(y)g(z)\:dydz\Biggr]\sqrt{xt}J_{\mu}(xt)\: dx
\end{equation}
Since  $f\sharp g\in L^{1}(sr)$ we obtain that
$$\int_{0}^{\infty} \Biggl[\int_{0}^{\infty }\int_{0}^{\infty
}|D_{\mu }(x,y,z)f(y)g(z)|\:dydz\Biggr]|\sqrt{xt}J_{\mu}(xt)|\: dx<\infty$$
for all $t\in (0,\infty)$. Then,
$$\mathit{h}_{\mu }\left( f\sharp g\right)(t)=\int_{0}^{\infty }\int_{0}^{\infty}f(y)g(z)\Biggl[\int_{0}^{\infty} \sqrt{xt}J_{\mu}(xt) D_{\mu }(x,y,z)\: dx\Biggr]\:dydz=
$$
$$\int_{0}^{\infty }\int_{0}^{\infty}f(y)g(z)\sqrt{yt}J_{\mu}(yt) \sqrt{zt}J_{\mu}(zt) t^{-\mu-\frac{1}{2}}\:dydz=t^{-\mu-\frac{1}{2}}
\mathit{h}_{\mu }(f)(t)\mathit{h}_{\mu }(g)(t). $$

\end{proof}

\subsection{Properties of $N_{\lambda}$}  Proof of Lemma \ref{lemaNlambda}

\begin{proof}
a)
$$
\left\| N_{\lambda }\right\| _{L^{1}(sr)} =\frac{1}{c_{\mu }}\int_{0}^{\infty }\lambda ^{\frac{\mu }{2}}x^{\frac{1}{2}}\mathcal{K}_{\mu }
(\sqrt{\lambda}\:x)x^{\mu +\frac{1}{2}}dx=$$
$$=\frac{1}{c_{\mu }}\int_{0}^{\infty} \lambda ^{\frac{\mu}{2}}x^{\frac{1}{2}}\frac{1}{2}\Bigl(\frac{1}{2}\sqrt{\lambda }\:x\Bigr)^{\mu}\left[ \int_{0}^{\infty }e^{-t-\frac{\lambda x^{2}}{4t}}\frac{dt}{t^{\mu +1}}\right] x^{\mu +\frac{1}{2}}dx=$$
$$=\frac{1}{c_{\mu }}\Bigl(\frac{1}{2}\Bigr)^{\mu +1}\lambda ^{\mu }\int_{0}^{\infty
}\left[ \int_{0}^{\infty }x^{2\mu +1}e^{-\frac{\lambda x^{2}}{4t}}dx\right]
e^{-t}\frac{dt}{t^{\mu +1}}=$$
$$=\frac{1}{c_{\mu }}2^{\mu }\lambda ^{-1}\Gamma
(\mu +1)\int_{0}^{\infty }e^{-t}dt=\lambda ^{-1}.$$

\noindent For b), in virtue of the following equality

\begin{equation}\label{eqautov2}
\int_{0}^{\infty} x^{\mu+\frac{1}{2}}e^{-\frac{x^{2}}{2}}\sqrt{xy}\:J_{\mu}(xy)\:dx=y^{\mu+\frac{1}{2}}e^{-\frac{y^{2}}{2}},\quad y>0,
\end{equation}
(see \cite[(5.9), pp. 46]{ober}), setting  $y=(\sqrt{a})^{-1}r$ with  $a,r>0$, and considering the change of variable $s=\frac{x}{\sqrt{a}}$, then we obtain that

$$
\int_{0}^{\infty} (\sqrt{a}\:s)^{\mu+\frac{1}{2}}e^{-\frac{as^{2}}{2}}\sqrt{sr}\:J_{\mu}(sr)\sqrt{a}\:ds=\Biggl(\frac{r}{\sqrt{a}}\Biggr)^{\mu+\frac{1}{2}}
e^{-\frac{r^{2}}{2a}}
$$
so,
\begin{equation}\label{eqautov3}
\int_{0}^{\infty} s^{\mu+1}e^{-\frac{as^{2}}{2}}\:J_{\mu}(sr)\:ds=a^{-\mu-1}r^{\mu}
e^{-\frac{r^{2}}{2a}}\:,
\end{equation}
for all $a>0$.
Then,
$$\mathit{h}_{\mu }N_{\lambda }(y)=\int_{0}^{\infty}\lambda^{\frac{\mu}{2}}x^{\frac{1}{2}}\mathcal{K}_{\mu}
(\sqrt{\lambda}\:x)\sqrt{xy}\:J_{\mu}(xy)\:dx=$$
$$\int_{0}^{\infty}\lambda^{\frac{\mu}{2}}x^{\frac{1}{2}}\frac{1}{2}\Bigl(\frac{1}{2}\sqrt{\lambda}\:x\Bigr)^{\mu}\left[ \int_{0}^{\infty } e^{-t-\frac{\lambda x^{2}}{4t}}\frac{dt}{t^{\mu +1}}\right]\sqrt{xy}\:J_{\mu}(xy)\:dx=$$
\begin{equation} \label{eqHankelN}
\lambda^{\mu}\:y^{\frac{1}{2}}\Bigl(\frac{1}{2}\Bigr)^{\mu+1}\int_{0}^{\infty}x^{\mu+1}\left[ \int_{0}^{\infty }e^{-t-\frac{\lambda x^{2}}{4t}} \frac{dt}{t^{\mu +1}}\right]\:J_{\mu}(xy)\:dx.
\end{equation}
Applying de boundedness of function $z^{-\mu}J_{\mu}(z)$   we obtain that

$$\int_{0}^{\infty}x^{\mu+1}\left[ \int_{0}^{\infty }e^{-t-\frac{\lambda x^{2}}{4t}}\frac{dt}{t^{\mu +1}}\right]\:|J_{\mu}(xy)|\:dx=$$
$$y^{\mu}\int_{0}^{\infty}\left[ \int_{0}^{\infty }e^{-\frac{\lambda x^{2}}{4t}}x^{2\mu+1}\bigl|(xy)^{-\mu}J_{\mu}(xy)\bigr|\:dx \right]\:e^{-t}\:\frac{dt}{t^{\mu +1}}< \infty $$

\noindent  Then, reversing the orden of integration in (\ref{eqHankelN}) and  applying
(\ref{eqautov3}) we obtain  that

$$\mathit{h}_{\mu }N_{\lambda }(y)=\lambda^{\mu}\:y^{\frac{1}{2}}\Bigl(\frac{1}{2}\Bigr)^{\mu+1}\int_{0}^{\infty}\left[ \int_{0}^{\infty }x^{\mu+1}e^{-(\frac{\lambda}{2t})\frac{x^{2}}{2}}J_{\mu}(xy)\:dx\right]\:e^{-t}\frac{dt}{t^{\mu +1}}=$$
$$
\lambda^{\mu}\:y^{\frac{1}{2}}\Bigl(\frac{1}{2}\Bigr)^{\mu+1}\int_{0}^{\infty}\Bigl(\frac{\lambda}{2t}\Bigr)^{-\mu-1}y^{\mu}e^{-\frac{ty^{2}}{\lambda}} \: e^{-t}\frac{dt}{t^{\mu +1}}=\lambda^{-1}\:y^{\mu+\frac{1}{2}}\int_{0}^{\infty}e^{-t(1+\frac{y^{2}}{\lambda})} \: dt.
$$

\noindent Considering the change of variable  $s=t(1+\frac{y^{2}}{\lambda})$ in the last integral we obtain finally

$$\mathit{h}_{\mu }N_{\lambda }(y)=\lambda^{-1}\:y^{\mu+\frac{1}{2}}\int_{0}^{\infty}e^{-s} \Bigl(1+\frac{y^{2}}{\lambda}\Bigr)^{-1}\:ds=
\lambda^{-1}\:y^{\mu+\frac{1}{2}}\Bigl(1+\frac{y^{2}}{\lambda}\Bigr)^{-1}=\frac{y^{\mu+\frac{1}{2}}}{\lambda +y^{2}}.
$$
\end{proof}

\noindent Proof of Lemma  \ref{lemaHankelNlambda}.
\begin{proof}
Suppose that $f\in L^{p}(sr^{p})$ and $\psi \in \mathcal{H}_{\mu }$, we claim that

\begin{equation}\label{convoloc distr}
\int_{0}^{\infty }\left( N_{\lambda }\sharp f\right) (x)\psi
(x)dx=\int_{0}^{\infty }f(z)\left( N_{\lambda }\sharp \psi \right)
(z)dz.
\end{equation}
Indeed, we first observe that the following integral is finite
\begin{equation*}
\int_{0}^{\infty }\left| f(z)\right| \left[ \int_{0}^{\infty
}\int_{0}^{\infty }\left| N_{\lambda }(y)\right| \left| \psi (x)\right|
D_{\mu }(x,y,z)dxdy\right] dz
\end{equation*}
In fact, given a integer $q$ such that $\frac{1}{p}+\frac{1}{q}=1$,
the function
$$G(z)= \int_{0}^{\infty }\int_{0}^{\infty }\left|
N_{\lambda }(y)\right| \left| \psi (x)\right| D_{\mu }(x,y,z)dxdy$$
is in  $L^{q}(sr^{q})$ because it is the convolution of $\left|
N_{\lambda }(y)\right|\in L^{1}(sr)$ and $\left| \psi (x)\right|\in
L^{q}(sr^{q})$ ($\psi \in \mathcal{H}_{\mu })$. Since $f\in
L^{p}(sr^{p})$ then
\begin{equation*}
\int_{0}^{\infty }\left| f(z)\right| G(z)dz=\int_{0}^{\infty }(
r\left| f(z)\right| )( s^{-1}r^{-1}G(z))
s\:dz<\infty
\end{equation*}
because $r\left| f\right| \in L^{p}(s)$ and  $s^{-1}r^{-1}G = c_{\mu
}rG\in L^{q}(s)$. Then
\begin{eqnarray*}
\int_{0}^{\infty}f(z)\left(N_{\lambda }\sharp \psi \right)(z)\:dz
&=&\int_{0}^{\infty }f(z)\left[ \int_{0}^{\infty }\int_{0}^{\infty
}N_{\lambda }(y)\psi (x)D_{\mu }(x,y,z)dx\:dy\right] dz \\
&=&\int_{0}^{\infty }\left[ \int_{0}^{\infty }\int_{0}^{\infty
}f(z)N_{\lambda }(y)D_{\mu }(x,y,z)dz\:dy\right] \psi (x)dx \\
&=&\int_{0}^{\infty }( N_{\lambda }\sharp\:f) (x)\psi (x)dx
\end{eqnarray*}
and we thus get (\ref{convoloc distr}). The proof for $f\in
L^{\infty }(r)$ is similar.

\noindent Now, given $\phi \in \mathcal{H}_{\mu }$ and $f\in
L^{p}(sr^{p})$  or  $L^{\infty }(r)$,  by (\ref{convoloc distr}), we
have that
\begin{equation}\label{eqNlambdaf}
(\mathit{h}_{\mu }( N_{\lambda }\sharp \:f) ,\phi
)=(( N_{\lambda}\sharp\: f) ,\mathit{h}_{\mu }\phi
)=\int_{0}^{\infty} f(x)\:N_{\lambda }\sharp\: \mathit{h}_{\mu }\phi
(x)\: dx
\end{equation}

\noindent By Propositions \ref{prop hankel conv}, \ref{invHankel}  and item b) of  Lemma \ref{lemaNlambda}
\begin{equation*}
\mathit{h}_{\mu }( N_{\lambda }\sharp \mathit{h}_{\mu }\phi )(y) =r%
\mathit{h}_{\mu }( N_{\lambda }) \mathit{h}_{\mu }(\mathit{h}_{\mu }\phi)(y)=\frac{\phi (y)}{\lambda +y^{2}}.
\end{equation*}

\noindent So,
\begin{equation}\label{eqNlambdaH}
N_{\lambda }\sharp \:\mathit{h}_{\mu }\phi =\mathit{h}_{\mu }\Bigl(
\frac{\phi}{\lambda +y^{2}}\:\Bigr).
\end{equation}
Finally, from  (\ref{eqNlambdaf}) and (\ref{eqNlambdaH}) we obtain that for $\phi \in \mathcal{H}_{\mu }$ that

$$
(\mathit{h}_{\mu }( N_{\lambda }\sharp \:f) ,\phi
)=\int_{0}^{\infty} f(x)\:N_{\lambda }\sharp\: \mathit{h}_{\mu }\phi
(x)\: dx=\int_{0}^{\infty} f(x)\: \mathit{h}_{\mu }\Bigl(
\frac{\phi}{\lambda +y^{2}}\:\Bigr)(x)\: dx=$$
$$=\int_{0}^{\infty}\:\frac{1}{\lambda +x^{2}}\:\mathit{h}_{\mu }( f)(x)\phi(x)\:dx=\Bigl(\frac{1}{\lambda +x^{2}}\:\mathit{h}_{\mu }( f) ,\phi \Bigr)$$

\end{proof}

\vskip.4in
{\bf Acknowledgments}  Some of the main ideas of this paper were discussed with Miguel Sanz. The author wishes to thank him for the many helpful suggestions and for the stimulating conversations.

\end{document}